\documentclass[12pt]{amsart}%

\usepackage{amssymb}
\usepackage{amsmath}
\usepackage{amsthm}
\usepackage{amscd}
\usepackage[margin=1in]{geometry}
\usepackage{hyperref}

\pagestyle{plain}

\theoremstyle{plain}
\newtheorem{theorem}{Theorem}[section]

\newtheorem{corollary}{Corollary}[section]
\newtheorem{lemma}{Lemma}[section]

\theoremstyle{remark}
\newtheorem{definition}{Definition}[section]
\newtheorem{remark}{Remark}[section]
\newtheorem{examples}{Examples}[section]

\DeclareMathOperator{\dist}{dist}
\DeclareMathOperator{\supp}{supp}

\DeclareMathOperator{\diag}{diag}

\DeclareMathOperator{\dom}{dom}
\DeclareMathOperator{\diverg}{div}
\DeclareMathOperator{\curl}{curl}

\begin{document}

\title{Magnetic energies and Feynman-Kac-It\^o formulas for symmetric Markov processes}
\author{Michael Hinz$^1$}
\address{$^1$ Fakult\"at f\"ur Mathematik, Universit\"at Bielefeld, Postfach 100131, 33501 Bielefeld, Germany}
\email{mhinz@math.uni-bielefeld.de}
\thanks{$^1$Research supported in part by SFB 701 of the German Research Council (DFG)}

\date{\today}

\begin{abstract}
Given a (conservative) symmetric Markov process on a metric space we consider related bilinear forms that generalize the energy form for a particle in an electromagnetic field. 
We obtain one bilinear form by semigroup approximation and another, closed one, by using a Feynman-Kac-It\^o formula.
If the given process is Feller, its energy measures have densities and its jump measure has a kernel, then the two forms agree on a core and the second is a closed extension of the first. In this case we provide the explicit form of the associated Hamiltonian. 
\end{abstract}
\maketitle

\section{Introduction}\label{S:Intro} 

The energy form $\mathcal{E}^{a,v}$ and Hamiltonian $H^{a,v}$ for a charged particle in $\mathbb{R}^3$ subject to a stationary magnetic field $B=-\curl a$ and a stationary electric field $E=-\nabla v$ are given by  
\begin{equation}\label{E:classmagform}
\mathcal{E}^{a,v}(f)=\frac12\int_{\mathbb{R}^3} |(\nabla+ia)f|^2dx+\int_{\mathbb{R}^3}|f|^2vdx,
\end{equation}
and
\begin{equation}\label{E:classmagHam}
H^{a,v}f=-\frac12(\nabla+ia)^2f+vf,
\end{equation}
respectively. The time evolution of the system obeys the Schr\"odinger equation for (\ref{E:classmagHam}) and is given by the group of operators $(e^{itH^{a,v}})_{t\in\mathbb{R}}$. It is also useful to study the $L^2(\mathbb{R}^3)$-semigroup $(P_t^{a,v})_{t>0}$ defined by $P_t^{a,v}=e^{tH^{a,v}}$, and an explicit representation of the latter provides information about the domain of $H^{a,v}$, \cite{Simon79}, its spectrum, see e.g. \cite{G10, Shi87, U94}, long-time behaviour and heat kernels, \cite{E95, G10, IShiT85, LT97, Ma82}, diamagnetic inequalities \cite{GKS, LT97, Si76, Simon79}, inequalities of Kato, Golden-Thompson or Lieb-Thirring type, \cite{E94, E95, G10, GKS}.   
For $a\equiv 0$ the operators $P_t^{0,v}$ are expressed in terms of Brownian motion $B=(B_t)_{t\geq 0}$ on $\mathbb{R}^3$ using the \emph{Feynman-Kac formula}, $P_t^{0,v}f(x)=\mathbb{E}_x\left[e^{-\int_0^t v(B_s)ds}f(B_t)\right]$,
see e.g. \cite{KarShr91, Simon79}. Nonzero $a$ leads to a similar formula,
\begin{equation}\label{E:FKI0}
P_t^{a,v}f(x)=\mathbb{E}_x\left[e^{i\int_0^t a(B_s)\circ dB_s-\int_0^t v(B_s)ds}f(B_t)\right],
\end{equation}
referred to as the \emph{Feynman-Kac-It\^o formula}. Here $\int_0^t a(B_s)\circ dB_s$ is the \emph{Stratonovich integral} of $(a(B_t))_{t\geq 0}$ with respect to $B$. In the present article we provide versions of (\ref{E:classmagform}), (\ref{E:classmagHam}) and (\ref{E:FKI0}) for symmetric Markov processes on metric spaces.

Various generalizations of  (\ref{E:classmagform}) and (\ref{E:classmagHam}) are known. Classical results for magnetic Hamiltonians on Euclidean spaces can be found in \cite{BroHL00, RS, Simon79}, for cases with more singular potentials see e.g. \cite{RS, Si73}. Magnetic Hamiltonians on manifolds are studied in \cite{BrMiSh02, BGPa, GP13, GK12, G10, G10Diss, G12, Shi87, Sh01}, on lattices and graphs in \cite{AJ09, B92, CTT10b, DM06, GMT14, HS99, LLPP15, Mil11, Sh94, Su94}, and on quantum graphs in \cite{BGPb, Har08, KS03}. Some first results for fractals may be found in \cite{HRo15, HTb, HTc}. Formula (\ref{E:FKI0}) for divergence free fields $a$ was stated in Simon's book on functional integration, \cite[Theorems 15.5 and 21.1 and Example 16.3]{Simon79}. During the eighties it was discussed by Gaveau, Ikeda, Malliavin and others, \cite{GV81, GM83, IShiT85, Ma82}. On Euclidean domains it was extended to more general vector fields $a$ by Broderix, Hundertmark und Leschke, \cite[Proposition 2.9]{BroHL00}. Shigekawa \cite{Shi87} established it on compact Riemannian manifolds, G\"uneysu \cite{G10Diss, G10, G12} proved it for stochastically and geodesically complete  and  incomplete cases. In \cite{GKS} G\"uneysu, Keller and Schmidt proved (\ref{E:FKI0}) for magnetic operators on graphs. See also \cite{GMT14, GP13} and \cite{AMU98, AU00, M82} for related results.

We consider generalizations of (\ref{E:classmagform}), (\ref{E:classmagHam}) and (\ref{E:FKI0}) where a $\mu$-symmetric Hunt process $Y=(Y_t)_{t\geq 0}$ on a locally compact separable metric space $(X,\varrho)$, endowed with a Radon measure $\mu$ with full support, replaces the Brownian motion. The process $Y$ may have jumps. We make use of the regular symmetric Dirichlet form $(\mathcal{E},\mathcal{F})$ uniquely associated with $Y$, \cite{ChFu12, FOT94}, and to improve the visibility of some conceptual ideas, assume it is \emph{conservative}. The energy form (\ref{E:classmagform}) and the Hamiltonian (\ref{E:classmagHam}) can be generalized using the \emph{first order theory for Dirichlet forms} as introduced by Sauvageot in \cite{S90} and Cipriani and Sauvageot in \cite{CS03}. It provides an abstract \emph{first order derivation $\partial$} taking functions into members of a certain Hilbert space $(\mathcal{H}, \left\langle\cdot,\cdot\right\rangle_{\mathcal{H}})$. Its probabilistic counterpart is the \emph{stochastic calculus for additive functionals}, \cite{ChFiKuZh08a, FOT94, N85}, which allows to generalize (\ref{E:FKI0}). 

The \emph{Beurling-Deny decomposition} (see \cite[Section 3.2]{FOT94}) of $(\mathcal{E},\mathcal{F})$ into a strongly local part and a pure jump part carries over to the space $\mathcal{H}$, \cite{CS03}. We write $\omega=\omega_c+\omega_j$ for the decomposition of an element $\omega\in\mathcal{H}$ into a local part $\omega_c$ and a jump part $\omega_j$. The jump part $\omega_j$ may be viewed as a function on $X\times X\setminus \diag$, the product space minus the diagonal. By $\mathcal{H}_a$ we denote the closed subspace of $\mathcal{H}$ consisting of elements $a$ with antisymmetric jump part, $a_j(x,y)=-a_j(y,x)$. To $\mathcal{H}_a$ we refer as \emph{space of generalized $L^2$-differential $1$-forms} (or $L^2$-vector fields). For certain elements $a$ of $\mathcal{H}_a$ which may be viewed as real valued functions on $X\times X\setminus\diag$ and for real valued Borel functions $v$ we can introduce the \emph{magnetic energy form}
\[\mathcal{E}^{a,v}(f):=\lim_{t\to 0}\frac{1}{2t}\int_X\int_X|f(x)-e^{ia(x,y)}f(y)|^2P_t(x,dy)\mu(dx)+\int_X |f|^2vd\mu.\]
For general real elements $a\in\mathcal{H}_a$ magnetic energy forms $f\mapsto \mathcal{E}^{a,v}(f)$ can be defined via approximation. We obtain a corresponding Beurling-Deny decomposition,
\begin{equation}\label{E:firstdecomp}
\mathcal{E}^{a,v}(f)=\mathcal{E}_c^a(f)+\int_{X\times X\setminus\diag} |f(x)-e^{ia_j(x,y)}f(y)|^2J(d(x,y))+\int_X|f|^2vd\mu,
\end{equation}
see Lemma \ref{L:magneticBD} below. Here the strongly local part $\mathcal{E}_c^a$ is of the form already encountered in \cite[Section 4]{HTb} and, generalizing the first summand in (\ref{E:classmagform}), can be expressed using the local part $\partial_c$ of the first order derivation $\partial$ in the sense of \cite{CS03}, $\mathcal{E}^a_c(f)=\left\langle (\partial_c+ia_c)f, (\partial_c+ia_c)f\right\rangle_{\mathcal{H}}$.

\emph{Fukushima's decomposition theorem}, \cite[Theorem 5.2.2]{FOT94}, 
states that for a continuous function $f\in\mathcal{F}$ we have $f(Y_t)-f(Y_0)=M_t^f+N_t^f$ with  a martingale additive functional $M=(M_t)_{t\geq 0}$ of finite energy and a continuous additive functional $N=(N_t)_{t\geq 0}$ of zero energy. The space $\mathring{\mathcal{M}}$ of martingale additive functionals of finite energy is a Hilbert space, \cite[Section 5]{FOT94}, and \emph{Nakao's theorem} (Theorem \ref{T:Nakao}) provides an isometric isomorphism $\Theta$ from $\mathcal{H}$ onto $\mathring{\mathcal{M}}$ such that the image $\Theta(g\partial f)$ of an element $g\partial f$ of $\mathcal{H}$ is a stochastic integral $g\bullet M^f$ of It\^o type. For diffusions on manifolds this was proved in \cite[Theorem 5.1]{N85}, and in the same paper a divergence functional $\Lambda$ on $\mathring{\mathcal{M}}$ with values in a certain  space $\mathcal{N}^\ast_c$ of continuous additive functionals was introduced, see Section \ref{S:AF} or \cite{ChFiKuZh08a, N85}. We define the \emph{Stratonovich line integral of $\omega\in\mathcal{H}$} along the path of $Y$ by
\begin{equation}\label{E:Stratofirst}
\int_{Y([0,t])}\omega:=\Theta(\omega)_t+\Lambda(\Theta(\omega))_t,
\end{equation}
see Definition \ref{D:Stratonovich}. For diffusions on manifolds (\ref{E:Stratofirst}) was stated as a theorem in \cite[Theorem 5.2]{N85}, based on a different definition in terms of local coordinates. Definition (\ref{E:Stratofirst}) is related to Stratonovich integrals considered by Kuwae in \cite{Ku10}, however, line integrals of $1$-forms are not discussed there.

Given a real element $a\in\mathcal{H}_a$ and a suitable function $v$ we can then define 
\begin{equation}\label{E:FKIfirst}
P_t^{a,v}f(x):=\mathbb{E}_x[e^{i\int_{Y([0,t])}a-\int_0^tv(Y_s)ds}f(Y_t)]
\end{equation}
for bounded Borel functions $f$. As $a_j$ is antisymmetric (\ref{E:FKIfirst}) yields a strongly continuous and semigroup $(P_t^{a,v})_{t>0}$ of bounded self-adjoint operators on $L^2(X,\mu)$, see Theorem \ref{T:semigroup}. Consequently there exists an associated closed quadratic form $(\mathcal{Q}^{a,v}, \dom \mathcal{Q}^{a,v})$ on $L^2(X,\mu)$. It provides another generalization of (\ref{E:classmagform}), and its Hamiltonian $(H^{a,v}, \dom H^{a,v})$ generalizes (\ref{E:classmagHam}). A priori the forms $\mathcal{E}^{a,v}$ and $\mathcal{Q}^{a,v}$ may not be related. In Theorem \ref{T:coincide} we assume that $(\mathcal{E},\mathcal{F})$ admits a carr\'e du champ in the sense of \cite[Chapter I]{BH91}, i.e. that all energy measures $\Gamma(f)$, \cite{FOT94, LeJan78, Si74}, are absolutely continuous with respect to $\mu$. We assume that the jump measure $J$ of $(\mathcal{E},\mathcal{F})$ is of form $J(d(x,y))=n(x,dy)\mu(dy)$ with kernel $n(x,dy)$, an assumption void in the strongly local case. Whenever $f$ satisfies certain smoothness and integrability conditions and $g\in \mathcal{F}\cap L^\infty(X,\mu)$, the dual $L^1$-$L^\infty$ pairing $\left\langle H^{a,v}f,g\right\rangle$ then equals
\begin{equation}
\left\langle (\partial_c+ia_c)f,(\partial_c+ia_c)g\right\rangle_{\mathcal{H}}
+\int_X\int_X(f(x)-e^{ia_j(x,y)}f(y))n(x,dy)\overline{g(x)}\mu(dx)+\left\langle vf,g\right\rangle.
\end{equation}
In Theorem \ref{T:coincide2} we assume in addition that the semigroup $(P_t)_{t>0}$ associated with $(\mathcal{E},\mathcal{F})$ is Feller and that there is an $L^2(X,\mu)$-dense subspace $\mathcal{C}_L$ of the domain of the Feller generator consisting of compactly supported functions. Under these assumptions the forms $\mathcal{Q}^{a,v}$ and $\mathcal{E}^{a,v}$ agree on $\mathcal{C}_L$ and $(\mathcal{Q}^{a,v}, \dom \mathcal{Q}^{a,v})$ is a closed extension of 
$(\mathcal{E}^{a,v}, \mathcal{C}_L)$. 

In \cite{HRo15, HTb} we used a very general argument (the KLMN theorem, \cite[Theorem X.17]{RS}) to conclude the closability of magnetic energy forms, seen as a perturbation of a given (closed) Dirichlet form, respectively. This method implies that the domain of the magnetic form equals the domain of the original form with zero magnetic field, but it needs some additional $L^\infty$-boundedness assumptions on the potential $a$. In the present situation the domain of $\mathcal{Q}^{a,v}$ (even for $v\equiv 0$) may be quite different from the domain of the form $\mathcal{Q}^{0,0}=\mathcal{E}$ with zero magnetic and electric potentials. See also \cite[Section X.3, Example 4]{RS} for another situation where a nonzero magnetic perturbation may change the energy domain. In contrast to our results in \cite{HRo15, HTb} we do not assume $a$ to satisfy any sort of $L^\infty$-bound.

For the Hamiltonian $H^{a,v}$ associated with $\mathcal{Q}^{a,v}$ Theorem \ref{T:coincide2} yields the representation
\[H^{a,v}f(x)=(\partial_c+ia_c)^\ast (\partial_c+ia_c)f(x)+\int_X(f(x)-e^{ia_j(x,y)}f(y))n(x,dy)+v(x)f(x),\]
$f\in\mathcal{C}_L$, generalizing (\ref{E:classmagHam}). We observe a semigroup approximation for $H^{a,v}$, Corollary \ref{C:sgapproxmagnetic} and a diamagnetic inequality, Corollary \ref{C:diamagenergy}. 

Theorems \ref{T:coincide} and \ref{T:coincide2} follow by a typical Girsanov-type argument, \cite{Simon79}. The effort done here consists in providing the necessary setup by systematically combining the first order theory from \cite{CS03} and the stochastic analysis of martingale additive functionals. To prove the symmetry of the operators $P_t^{a,v}$ in  a general situation we verify the time-antisymmetry of (\ref{E:Stratofirst}). It was known before for the diffusion case, \cite{Fi93}, \cite{Simon79}, and our result, Theorem \ref{T:antisymm}, now allows $Y$ to have jumps. If $M=\Theta(\omega)$ is the martingale additive functional associated with  $\omega\in\mathcal{H}$ under Nakao's isomorphism $\Theta$, then  Theorem \ref{T:jumpfunction} states that $M_t-M_{t-}=\omega_j(Y_{t-},Y_t)$ for every $t>0$ $\mathbb{P}_\mu$-a.s. That is, the \emph{jump function of $M$}, \cite{ChFiKuZh08a, Sh88}, is given by the jump part $\omega_j$ of $\omega$. If $\omega_j$ is antisymmetric we can use a representation of the divergence functional $\Lambda$ shown in \cite[Definition 3.3 and Theorem 3.6]{ChFiKuZh08a} to obtain the antisymmetry of the integral. For background and related results see \cite{ChFiKuZh08a, ChFiKuZh08b, ChFiTaYiZh, ChZh02, Fi93, FiKu04, FOT94, LLZh98, LZh94, N85}.

Our results apply to diffusions and jump processes on Euclidean spaces, domains, manifolds, graphs and quantum graphs and also to processes on metric spaces \cite{Ba, ChKu06, Ki01}, for which the related vector analysis had been investigated in \cite{CS09, HKT, HRT, HTa, HTb, HTc, IRT}.

In Sections \ref{S:Alg} and \ref{S:jumps} we briefly survey an algebraic point of view and connect it with jump measures to illustrate some features of the first order calculus. In Section \ref{S:Dforms} we consider conservative regular Dirichlet forms and take a semigroup perspective upon $1$-forms to introduce energy forms $\mathcal{E}^{a,v}$ with magnetic and electric potential. We discuss Nakao's theorem in Section \ref{S:AF}, Nakao's divergence and Stratonovich line integrals in Section \ref{S:lineintegral} and time-antisymmetry in Section \ref{S:timereversal}. In Section \ref{S:FKI} the Feynman-Kac-It\^o formula (\ref{E:FKIfirst}) is used to define a strongly continuous self-adjoint semigroup. Section \ref{S:Ident} shows that under the mentioned hypotheses this semigroup is associated to an extension of the form $\mathcal{E}^{a,v}$. Given a symmetric bilinear expression such as $\mathcal{E}(f,g)$ we agree to write $\mathcal{E}(f):=\mathcal{E}(f,f)$.

\subsection*{Acknowledgment}
The author thanks Shiping Liu and Alexander Teplyaev for stimulating discussions and the anonymous referee for careful reading and immensely helpful criticism.

\section{Algebraic preliminaries}\label{S:Alg}

Let $X$ be a nonempty set and let $\mathcal{D}$ be an algebra of bounded complex valued functions on $X$ (with pointwise multiplication). First assume that $\mathcal{D}$ is unital, i.e. that $\mathbf{1}\in\mathcal{D}$ (otherwise we can use the unitisation of $\mathcal{D}$, see below).  Consider the \emph{universal derivation} $d:\mathcal{D}\to\mathcal{D}\otimes\mathcal{D}$, defined by 
\begin{equation}\label{E:abstractderivation}
df:=f\otimes \mathbf{1} - \mathbf{1}\otimes f, \ \ f\in\mathcal{D}.
\end{equation}
In particular, $d\mathbf{1}=0$. With left and right actions of $\mathcal{D}$ on $\mathcal{D}\otimes\mathcal{D}$ given by
\begin{equation}\label{E:abstractactions}
h(f\otimes g):=(fh)\otimes g\ \ \text{and}\ \ (f\otimes g)h:=f\otimes (gh)
\end{equation}
we observe $d(fg)=fdg+(df)g$, $f,g\in\mathcal{D}$. By $\Omega^1(\mathcal{D})$ we denote the subbimodule of $\mathcal{D}\otimes\mathcal{D}$ generated by the elements $fdg$ and $(df)g$. It agrees with the kernel of the multiplication operator from $\mathcal{D}\otimes\mathcal{D}$ onto $\mathcal{D}$, obtained as the linear extension of $f\otimes g\mapsto fg$. With the interpretation $(\sum_i f_i\otimes g_i)(x,y)=\sum_i f_i(x)g_i(y)$ as bounded complex valued functions on $X\times X$ we may view the elements of $\Omega^1(\mathcal{D})$ as functions on $X\times X$ that vanish on the diagonal and in particular, 
\begin{align}\label{E:elementary}
df(x,y)&=f(x)-f(y)\\
gdf(x,y)&=g(x)(f(x)-f(y))\notag\\
(df)g(x,y)&=g(y)(f(x)-f(y))\notag .
\end{align}
In the next section we will consider the elements of $\Omega^1(\mathcal{D})$ as functions on $X\times X\setminus \diag$, where $\diag:=\left\lbrace (x,x): x\in X\right\rbrace$ denote the diagonal in $X\times X$.

The antisymmetrizer $\mathcal{A}$ is defined as the linear operator $\mathcal{A}:\Omega^1(\mathcal{D})\to\Omega^1(\mathcal{D})$ given by
\begin{equation}\label{E:anti}
\mathcal{A}\omega(x,y):=\frac12(\omega(x,y)-\omega(y,x)), 
\end{equation}
$\omega\in \Omega^1(\mathcal{D})$. We consider its image $\Omega^1_a(\mathcal{D}):=\mathcal{A}(\Omega^1(\mathcal{D}))$ and write again $\omega$ to denote $\mathcal{A}\omega$, seen as an element of $\Omega^1_a(\mathcal{D})$. Restricted to this space, the actions (\ref{E:abstractactions}) agree, i.e. for any $\omega\in \Omega_a^1(\mathcal{D})$ and any $h\in\mathcal{D}$ we have 
\[(h\omega)(x,y)=\frac12(h(x)+h(y))\omega(x,y)=(\omega h)(x,y),\] 
seen as equalities in $\Omega^1_a(\mathcal{D})$, and in particular, $gdf=(df)g$. To $\Omega^1_a(\mathcal{D})$ one usually refers as \emph{space of universal $1$-forms}. See for instance \cite[Section 8.1]{GVF}.

If $\mathcal{D}$ does not contain $\mathbf{1}$ we consider the unitisation $\mathcal{D}_e$ of $\mathcal{D}$, given by $\mathcal{D}_e:=\mathcal{D}\oplus\mathbb{C}$ with multiplication $(f,\lambda)(g,\nu):=(fg+\lambda g+\nu f,\lambda\nu)$ for all $f,g\in\mathcal{D}$, $\lambda,\nu\in\mathbb{C}$. Its unit element is $(0,1)$. Viewing $\mathbb{C}\times\mathbb{R}$ with the product $(a,b)(c,d):=(ac+ad+bc,bd)$ we can consider the multiplication in $\mathcal{D}_e$ as pointwise operation. The map $f\mapsto (f,0)$ provides an injection of $\mathcal{D}$ into $\mathcal{D}_e$. We therefore identify $f$ and $(f,0)$, in this sense $\mathcal{D}$ may be seen as an ideal of $\mathcal{D}_e$. As before we can define actions of $\mathcal{D}_e$ on $\mathcal{D}_e\otimes\mathcal{D}_e$, introduce the universal derivation by $d(f,\lambda)=(f,\lambda)\otimes (0,1)-(0,1)\otimes (f,\lambda)$, and consider the subbimodule $\Omega^1(\mathcal{D}_e)$. Again the elements $(g,0)d(f,0)$ of $\Omega^1(\mathcal{D}_e)$ are viewed as functions on $X\times X$, for instance $((g,0)d(f,0))(x,y)=(g(x)(f(x)-f(y),0)$ will be identified with $(x,y)\mapsto g(x)(f(x)-f(y))$. In this sense we may assume that $df$, $gdf$ and $(df)g$ can be written as in (\ref{E:elementary}).

\section{Non-local forms and magnetic potentials}\label{S:jumps}

In this section let $X$ be a locally compact separable Hausdorff space and  $\mathcal{D}$ an algebra of bounded functions on $X$ such that $C_c(X)\cap \mathcal{D}$ is uniformly dense in $C_c(X)$. Suppose that $J$ is a symmetric nonnegative Radon measure on $X\times X\setminus\diag$ such that for all $f\in\mathcal{D}$ the elements $df$ are square integrable with respect to $J$. Then all elements $\omega$ of $\Omega^1(\mathcal{D})$ are $J$-square integrable functions $(x,y)\mapsto \omega(x,y)$ on $X\times X\setminus \diag$, and 
\begin{equation}\label{E:L2Jseminorm}
\left\|\omega\right\|_{L^2(X\times X\setminus\diag,J)}^2=\int_{X\times X\setminus\diag}|\omega(x,y)|^2J(d(x,y))
\end{equation}
defines a Hilbert seminorm on $\Omega^1(\mathcal{D})$. Whether the seminorm of an element (\ref{E:elementary}) of $\Omega^1(\mathcal{D})$ is zero or not depends on $f$, $g$ and the structure of $J$. 

\begin{examples}
Consider $X=\mathbb{R}^n$ and $\mathcal{D}=C_c^1(\mathbb{R}^n)$, the space of compactly supported continuously differentiable functions on $\mathbb{R}^n$. Given $0<\alpha<2$ and $\varepsilon>0$, let $J$ be given by $J(d(x,y))= \mathbf{1}_{\left\lbrace |x-y|<\varepsilon\right\rbrace}\:|x-y|^{-n-\alpha}\:dxdy$. If $f$ is constant on $\left\lbrace x\in \mathbb{R}^n: \dist(x,\supp g)<\varepsilon \right\rbrace$ then $(df)g$ has zero seminorm.
\end{examples}

\begin{lemma}
The space $\Omega^1(\mathcal{D})$ is dense in $L^2(X\times X\setminus \diag, J)$.
\end{lemma}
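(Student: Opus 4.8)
The plan is to argue by duality. Since every element $g\,df$ of $\Omega^1(\mathcal D)$ satisfies $\|g\,df\|_{L^2(J)}\le\|g\|_\infty\|df\|_{L^2(J)}<\infty$, we have $\Omega^1(\mathcal D)\subseteq L^2(X\times X\setminus\diag,J)$, and it suffices to show that any $\xi\in L^2(X\times X\setminus\diag,J)$ orthogonal to every element of $\Omega^1(\mathcal D)$ vanishes $J$-a.e. The reason I would avoid the more direct route of approximating a given $\varphi\in C_c(X\times X\setminus\diag)$ by elements of $\Omega^1(\mathcal D)$ is that producing such approximants with support controlled away from the diagonal requires cutoff functions lying in $\mathcal D$ with prescribed support; the hypothesis only gives uniform density of $C_c(X)\cap\mathcal D$ in $C_c(X)$, which does not control supports, and near the diagonal $J$ may be highly singular, so uniform smallness alone does not yield smallness in $L^2(J)$.

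The key observation is that $\Omega^1(\mathcal D)$ is a $\mathcal D$-bimodule, so for every $f,g,h\in\mathcal D$ the function $(x,y)\mapsto g(x)h(y)(f(x)-f(y))$ lies in $\Omega^1(\mathcal D)$, cf. (\ref{E:elementary}). First I would fix $f\in\mathcal D$ and introduce the complex measure $\mu_f:=(f(x)-f(y))\,\overline\xi\,J$ on $X\times X\setminus\diag$. By the standing hypothesis $df\in L^2(J)$, the Cauchy--Schwarz inequality shows that $\mu_f$ has finite total variation; this finiteness is precisely what makes the duality argument work and is the one place where the hypothesis $df\in L^2(J)$ enters. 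Orthogonality of $\xi$ then reads $\int g\otimes h\,d\mu_f=0$ for all $g,h\in\mathcal D$.

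Next I would extend this to test functions $g,h\in C_c(X)$: using that $C_c(X)\cap\mathcal D$ is uniformly dense in $C_c(X)$ and that $\mu_f$ is finite, uniform convergence $g_n\to g$ and $h_n\to h$ passes to the limit under $\int(\cdot)\,d\mu_f$. Viewing $\mu_f$ as a finite measure on $X\times X$, which does not charge the diagonal, the span of $\{g\otimes h:g,h\in C_c(X)\}$ is a conjugation-closed subalgebra of $C_0(X\times X)$ that separates points and vanishes nowhere, hence is uniformly dense by the Stone--Weierstrass theorem. As $\mu_f$ is a bounded functional on $C_0(X\times X)$ this forces $\mu_f=0$, that is, $(f(x)-f(y))\xi(x,y)=0$ for $J$-a.e. $(x,y)$, so $\xi$ vanishes $J$-a.e. on $\{f(x)\neq f(y)\}$.

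Finally I would let $f$ vary. Since $C_c(X)\cap\mathcal D$ is dense in $C_c(X)$ it separates the points of $X$, and by separability of $X$ one can extract a countable family $(f_n)\subseteq C_c(X)\cap\mathcal D$ still separating points, so that $X\times X\setminus\diag=\bigcup_n\{f_n(x)\neq f_n(y)\}$. Combining the previous step over this countable family yields $\xi=0$ $J$-a.e., proving that the orthogonal complement of $\Omega^1(\mathcal D)$ is trivial and hence that $\Omega^1(\mathcal D)$ is dense. The main obstacle is the one already flagged---the possible singularity of $J$ along the diagonal---which the bimodule identity together with the finiteness of $\mu_f$ resolves by transferring the whole problem to a Stone--Weierstrass statement on $X\times X$ and a point-separation argument.
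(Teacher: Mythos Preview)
Your proof is correct and shares the paper's overall strategy---orthogonality plus Stone--Weierstrass---but the execution differs. The paper tests an orthogonal element $\omega$ against $(df)g$ with $\supp f\cap\supp g=\emptyset$, reducing to $\int f(x)g(y)\,\overline\omega\,dJ=0$; it then applies Stone--Weierstrass on $C_c(X\times X\setminus\diag)$ and handles the possibly infinite signed measure $\overline\omega\,J$ via a cutoff and Cauchy--Schwarz. You instead fix $f$ and use the full bimodule structure to test against $g(df)h$, which produces the \emph{finite} complex measure $\mu_f=(df)\,\overline\xi\,J$; Stone--Weierstrass on $C_0(X\times X)$ then kills $\mu_f$ without any cutoff, at the cost of a second step varying $f$ over a countable separating family. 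Your route is arguably cleaner near the diagonal; the paper's is a one-shot argument. One minor caveat: extracting a countable point-separating subfamily of $C_c(X)\cap\mathcal D$ tacitly uses that $X\times X\setminus\diag$ is Lindel\"of, which is guaranteed once $X$ is second countable (automatic in the separable metric setting where the lemma is actually applied).
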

\begin{proof}
The closure of $\Omega^1(\mathcal{D})$ is a closed subspace of $L^2(X\times X\setminus \diag, J)$. If $\omega$ is an element of its orthogonal complement, then $\left\langle (df)g,\omega\right\rangle_{L^2(X\times X\setminus \diag, J)}=0$ for all $f,g\in C_c(X)\cap \mathcal{D}$ and $0=\int_{X\times X\setminus \diag}f(x)g(y)\overline{\omega(x,y)}J(d(x,y))$ whenever $f$ and $g$ have disjoint supports. By Stone-Weierstrass we can approximate any function from $C_c(X\times X\setminus \diag)$ uniformly by linear combinations of such functions $(x,y)\mapsto f(x)g(y)$ and, using Cauchy-Schwarz and a simple cut-off argument, also in $L^2(X\times X\setminus \diag, J)$. Hence the integral of any function from $C_c(X\times X\setminus \diag)$ with respect to the measure $\overline{\omega}\circ J$ is zero, so that $\omega=0$ in $L^2(X\times X\setminus \diag, J)$.
\end{proof} 
The left and right actions (\ref{E:abstractactions}) of $\mathcal{D}$ on $\Omega^1(\mathcal{D})$ induce left and right actions on $\mathcal{D}$ on $L^2(X\times X\setminus\diag, J)$ in a straightforward manner.

There is a more abstract way of defining the same space. By
\begin{equation}\label{E:tensorseminormJ}
\left\|f\otimes g\right\|_{\mathcal{H}}:=\left\|(df)g\right\|_{L^2(X\times X\setminus\diag, J)}
\end{equation}
we define a Hilbert seminorm on $\mathcal{D}\otimes\mathcal{D}$. Factoring out zero seminorm elements and completing again yields a Hilbert space $\mathcal{H}$, a special case of a construction proposed by Sauvageot \cite{S90} and by Cipriani and Sauvageot in \cite{CS03}.  See also \cite{CS03, CS09, HRT, HTa, HTb, HTc, IRT}. The map $\iota: \mathcal{H}\to L^2(X\times X\setminus\diag, J)$, defined as the linear and continuous extension of $\iota(f\otimes g):=(df)g$, provides an \emph{isometric isomorphism} between the two spaces. We therefore identify the elements of $\mathcal{H}$ with functions in $L^2(X\times X\setminus\diag, J)$. The preimage in $\mathcal{H}$ of $df\in \Omega^1(\mathcal{D})$ under $\iota$ is denoted by 
\begin{equation}\label{E:derivationjump}
\partial f:=f\otimes \mathbf{1}.
\end{equation}
Extending this definition we can obtain a linear map $\partial:\mathcal{D}\to \mathcal{H}$. Definitions (\ref{E:abstractactions}) motivate to declare a right action of $\mathcal{D}$ on $\mathcal{H}$ by
\begin{equation}\label{E:rightaction}
(f\otimes g)h:=f\otimes (gh)
\end{equation}
and continuous linear extension, note that for any finite linear combination $\omega=\sum_i f_i\otimes g_i$ we have 
\begin{equation}\label{E:rightactionbound}
\left\|\omega h\right\|_{\mathcal{H}}\leq \left\|h\right\|_{\sup}\left\|\omega\right\|_{\mathcal{H}},
\end{equation}
and therefore also for arbitrary $\omega\in\mathcal{H}$. Then $(x,y)\mapsto h(y)\omega(x,y)$ represents $\omega h$ and in particular, $(df)g$ represents $(\partial f)g$. The $\mathcal{H}$-class of $gdf$ is $(fg)\otimes \mathbf{1}-g\otimes f$, what suggests to modify (\ref{E:abstractactions}) and to define a left action of $\mathcal{D}$ on $\mathcal{H}$ by 
\begin{equation}\label{E:leftaction}
h(f\otimes g):=(fh)\otimes g-h\otimes (fg)
\end{equation}
and continuous linear extension. Similarly as before we have 
\begin{equation}\label{E:leftactionbound}
\left\|h \omega\right\|_{\mathcal{H}}\leq \left\|h\right\|_{\sup}\left\|\omega\right\|_{\mathcal{H}},
\end{equation}
a priori for finite linear combinations as above and consequently for all $\omega\in\mathcal{H}$. Then $(x,y)\mapsto h(x)\omega(x,y)$ represents  $h\omega$, and in particular, $gdf$ represents $g\partial f$. These definitions now yield a Leibniz rule for $\partial$,
\begin{equation}\label{E:Leibniz}
\partial(fg)=f\partial g+ (\partial f)g, \ \ f,g\in \mathcal{D}.
\end{equation}

The antisymmetrizer $\mathcal{A}$ as in formula (\ref{E:anti}) may also be seen as an orthogonal projection in $L^2(X\times X\setminus\diag,J)$, and we denote its image by $L^2_a(X\times X\setminus\diag,J)$. The space $\Omega^1_a(\mathcal{D})$ is dense in $L^2_a(X\times X\setminus \diag, J)$. To the preimage $\mathcal{H}_a$ of $L^2_a(X\times X\setminus \diag, J)$ under $\iota$ we refer as the \emph{space of differential $1$-forms associated with $J$}. Given $a\in \mathcal{H}_a$, the elements $a h$ and $ha$ agree in $\mathcal{H}_a$, both are represented by $(x,y)\mapsto \frac12(h(x)+h(y))a(x,y)$. That is, on $\mathcal{H}_a$ the left and right actions of $\mathcal{D}$ coincide.

By
\[\mathcal{E}(f,g):=\left\langle \partial f, \partial g\right\rangle_\mathcal{H}=\int_{X\times X\setminus\diag}(f(x)-f(y))\overline{(g(x)-g(y))} J(d(x,y))\]
we can define a nonnegative definite (conjugate) symmetric bilinear form on $\mathcal{D}$ and $f\mapsto \mathcal{E}(f)^{1/2}$ provides a Hilbert seminorm on $\mathcal{D}$ with respect to which $\partial$ becomes a bounded operator, $\left\|\partial f\right\|_{\mathcal{H}}=\left\|df\right\|_{L^2(X\times X\setminus\diag,J)}=\mathcal{E}(f)^{1/2}$.

\begin{examples}\mbox{}
\begin{enumerate}
\item[(i)]
For $X=\mathbb{R}^n$ and fixed $0<\alpha<2$ let $J(d(x,y))=\frac12|x-y|^{-n-\alpha}\:dxdy$ and let $\mathcal{D}$ be the space of Lipschitz functions with compact support. Then the form
\[\mathcal{E}(f)=\frac12\int_{\mathbb{R}^n}\int_{\mathbb{R}^n}\frac{|f(x)-f(y)|^2}{|x-y|^{n+\alpha}}dxdy, \ \ f\in\mathcal{D},\]
is closable on $L^2(\mathbb{R}^n)$ and (up to a constant) its closure $(\mathcal{E}, H^\alpha(\mathbb{R}^d))$ is the bilinear form associated with the \emph{fractional Laplacian} $(-\Delta)^{\alpha/2}$, \cite{Ber96, FOT94, Sa99}.
\item[(ii)] Let $(V,b,\mu)$ be a \emph{weighted graph}, \cite{GKS}, i.e. let $V\neq \emptyset$ be a countable set, $b$ a nonnegative symmetric real valued function on $V\times V$ vanishing on the diagonal and satisfying $\sum_{q\in V} b(p,q)<+\infty$ for all $p\in V$, and 
$\mu$ a positive real valued function on $V$.  We assume that $(V,b,\mu)$ is locally finite and connected, \cite{GKS}, and endow $V$ with the discrete topology. Then $\mathcal{D}=C_c(V)\oplus \mathbb{C}$ and $J(\left\lbrace p\right\rbrace,\left\lbrace q\right\rbrace)=b(p,q)$ yield the energy form
\begin{equation}\label{E:graphform}
\mathcal{E}(f,g)=\sum_{p}\sum_q b(p,q)(f(p)-f(q))\overline{(g(p)-g(q))}.
\end{equation}
\end{enumerate}
\end{examples}

Given a real valued element $a$ of $\mathcal{H}_a$ we can introduce a quadratic form 
\begin{equation}\label{E:jump}
\mathcal{E}^a(f):=\int_{X\times X\setminus\diag}|f(x)-e^{ia(x,y)}f(y)|^2 J(d(x,y)), \ \ f\in\mathcal{D},
\end{equation}
a priori it may be extended real valued. To $a$ we refer as \emph{magnetic (or vector) potential}. The modification of $df$ to include a phase as in (\ref{E:jump}), $d_af(x,y):=f(x)-e^{ia(x,y)}f(y)$, is a variant of the \emph{Peierls substitution}, see \cite[Section 2.2.3]{AkkMon07} and the references cited there.

\begin{lemma}\label{L:boundedjumpcase}
Let $a\in\mathcal{H}_a$ be real valued. Then $\mathcal{E}^a(f)\leq 4\mathcal{E}(f)+4\left\|f\right\|_{\sup}^2\left\|a\right\|_\mathcal{H}^2$, $f\in\mathcal{D}$, and $\mathcal{E}^a$ defines a (conjugate) symmetric bilinear form on $\mathcal{D}$ by polarization.
\end{lemma}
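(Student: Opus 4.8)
The plan is to realize $\mathcal{E}^a$ as the squared norm of a \emph{linear} map into $L^2(X\times X\setminus\diag, J)$ and to read off both the quantitative bound and the bilinear-form statement from this single observation. Recall from the remark preceding the lemma that $d_af(x,y):=f(x)-e^{ia(x,y)}f(y)$, defined $J$-a.e.\ on $X\times X\setminus\diag$. Since $a$ is real valued, $e^{ia(x,y)}$ has modulus one, and $f\mapsto d_af$ is manifestly linear in $f$. Because $\iota$ is an isometric isomorphism onto $L^2(X\times X\setminus\diag,J)$, we have $\|a\|_{\mathcal{H}}^2=\int_{X\times X\setminus\diag}|a(x,y)|^2J(d(x,y))$ and $\mathcal{E}(f)=\int_{X\times X\setminus\diag}|f(x)-f(y)|^2J(d(x,y))$, so that by definition (\ref{E:jump}) one has $\mathcal{E}^a(f)=\|d_af\|_{L^2(X\times X\setminus\diag,J)}^2$.

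First I would establish the pointwise estimate. Writing $d_af(x,y)=(f(x)-f(y))+(1-e^{ia(x,y)})f(y)$ and using the elementary inequality $|1-e^{i\theta}|=2|\sin(\theta/2)|\leq|\theta|$, valid for real $\theta$, together with $|f(y)|\leq\|f\|_{\sup}$, gives
\[|d_af(x,y)|\leq|f(x)-f(y)|+\|f\|_{\sup}\,|a(x,y)|\]
for $J$-a.e.\ $(x,y)$. Squaring by means of $(p+q)^2\leq 2p^2+2q^2$ and integrating against $J$ then yields $\mathcal{E}^a(f)\leq 2\mathcal{E}(f)+2\|f\|_{\sup}^2\|a\|_{\mathcal{H}}^2$, which is even slightly stronger than the asserted bound with constant $4$. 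In particular $\mathcal{E}^a(f)$ is finite, because $f$ is bounded, $df$ is $J$-square integrable, and $a\in\mathcal{H}_a$.

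This finiteness is the only point that needs care, and it is exactly what promotes $d_a$ to a genuine linear map $d_a:\mathcal{D}\to L^2(X\times X\setminus\diag,J)$. Once this is known, the second assertion is automatic: the quadratic form $f\mapsto\mathcal{E}^a(f)=\|d_af\|_{L^2(X\times X\setminus\diag,J)}^2$ is the pullback under the linear map $d_a$ of the squared Hilbert norm, so its polarization
\[\mathcal{E}^a(f,g):=\langle d_af,d_ag\rangle_{L^2(X\times X\setminus\diag,J)}=\int_{X\times X\setminus\diag}(f(x)-e^{ia(x,y)}f(y))\,\overline{(g(x)-e^{ia(x,y)}g(y))}\,J(d(x,y))\]
is a well-defined (conjugate) symmetric sesquilinear form on $\mathcal{D}$ with $\mathcal{E}^a(f,f)=\mathcal{E}^a(f)$, as desired.

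I would expect no serious obstacle: the lemma is essentially the statement that $\mathcal{E}^a$ is the squared seminorm attached to the Peierls-substituted difference operator $d_a$, and everything reduces to the scalar inequality $|1-e^{i\theta}|\leq|\theta|$ together with the triangle inequality and the elementary square bound. The only mild care needed is in handling $a$ and hence $e^{ia(x,y)}$ up to $J$-null sets, so that $d_af$ is defined merely $J$-a.e.; this is harmless since all relevant quantities are integrals against $J$.
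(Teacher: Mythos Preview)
Your proof is correct and follows essentially the same route as the paper's: both decompose $d_af(x,y)=(f(x)-f(y))+(1-e^{ia(x,y)})f(y)$, bound $|1-e^{ia(x,y)}|\leq|a(x,y)|$ (the paper via the integral representation $e^{ia}-1=ia\int_0^1 e^{ita}\,dt$, you via $2|\sin(\theta/2)|\leq|\theta|$), and then square and integrate. Your use of $(p+q)^2\leq 2p^2+2q^2$ even yields the sharper constant $2$ in place of the paper's $4$, and your treatment of the bilinear-form statement as the polarization of $\|d_af\|_{L^2(J)}^2$ is exactly what the paper means by ``the symmetry is inherited from the scalar product in $L^2(X\times X\setminus\diag,J)$.''
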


\begin{proof} The symmetry is inherited from the scalar product in $L^2(X\times X\setminus \diag, J)$. For the estimate note that
\[\mathcal{E}^a(f)\leq 4\int_X\int_X|f(x)-f(y)|^2J(dxdy)\notag\\
+4\int_X\int_X|f(y)|^2|1-e^{ia(x,y)}|^2J(dxdy),\]
and since $e^{ia(x,y)}-1=ia(x,y)\int_0^1e^{ita(x,y)}dt$, the second integral is bounded by $\left\|af\right\|_{\mathcal{H}}^2$.
\end{proof}

\section{Dirichlet forms, semigroups and magnetic potentials}\label{S:Dforms}

Now let $(X,\varrho)$ be a locally compact separable metric space, $\mu$ a nonnegative Radon measure on $X$ with full support and $(\mathcal{E},\mathcal{F})$ a regular Dirichlet form on  $L^2(X,\mu)$, \cite{ChFu12, FOT94}. A priori Dirichlet form theory is formulated for real valued (classes of) functions. We later use the natural complexification, and to keep notation short we do so without introducing new symbols. By $(P_t)_{t>0}$ we denote the associated $\mu$-symmetric Markovian semigroup, \cite{FOT94}, and write $(P_t(\cdot,\cdot))_{t>0}$ for the associated family of Markov transition kernels. Then for any $t>0$ and any bounded Borel function we have $P_tf(x)=\int_X f(y)P_t(x,dy)$ for $\mu$-a.e. $x\in X$, and for any $t>0$ the nonnegative Radon measure $\Pi_t(dxdy):=\frac{1}{t} P_t(x,dy)\mu(dx)$ 
is symmetric on $X\times X$. We assume that $(\mathcal{E},\mathcal{F})$ is \emph{conservative}, i.e. $P_t\mathbf{1}=\mathbf{1}$ for all $t>0$. Then 
\begin{equation}\label{E:sgapprox}
\mathcal{E}(f,g)=\lim_{t\to 0}\frac12\int_X\int_X(f(x)-f(y))(g(x)-g(y))\Pi_t(dxdy),
\end{equation}
$f,g\in\mathcal{F}$. Now set $\mathcal{C}:=\mathcal{F}\cap C_c(X)$. The \emph{Beurling-Deny decomposition} of $(\mathcal{E},\mathcal{F})$ reads
\begin{equation}\label{E:BD}
\mathcal{E}(f,g)=\mathcal{E}_c(f,g)+\mathcal{E}_j(f,g)
\end{equation}
$f,g\in\mathcal{C}$, where $\mathcal{E}_c$ is strongly local and \[\mathcal{E}_j(f,g)=\int_{X\times X\setminus\diag}(f(x)-f(y))(g(x)-g(y))J(d(x,y))\]
with a symmetric nonnegative Radon measure $J$ on $X\times X\setminus \diag$. Both $\mathcal{E}_c$ and $J$ are uniquely determined, \cite[Section 3.2]{FOT94}.  The next two results are straightforward.

\begin{lemma}\label{L:vague}
For any $\varepsilon>0$ and any $a\in C_c(X\times X\setminus\diag)$ we have
\[\lim_{t\to 0}\frac12\int\int_{\left\lbrace \varrho(x,y)>\varepsilon\right\rbrace}a(x,y)\Pi_t(dxdy)=\int_{\left\lbrace \varrho(x,y)>\varepsilon\right\rbrace}a(x,y)J(d(x,y)).\]
\end{lemma}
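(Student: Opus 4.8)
\emph{Overview and building blocks.} The plan is to prove that the measures $\nu_t:=\frac12\Pi_t$ converge vaguely on $X\times X\setminus\diag$ to $\nu:=J$, and then to insert the cut-off $\mathbf{1}_{\{\varrho>\varepsilon\}}$ by a sandwich argument. First I would treat the elementary product integrands $(x,y)\mapsto f(x)g(y)$ with $f,g\in\mathcal{C}:=\mathcal{F}\cap C_c(X)$ whose supports are disjoint. Then $\dist(\supp f,\supp g)>0$, so $f$ vanishes on a neighbourhood of $\supp g$; strong locality of $\mathcal{E}_c$ forces $\mathcal{E}_c(f,g)=0$, and (\ref{E:BD}) collapses to $\mathcal{E}(f,g)=\mathcal{E}_j(f,g)$. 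Expanding $(f(x)-f(y))(g(x)-g(y))$, discarding the diagonal terms $f(x)g(x)$ and $f(y)g(y)$ (which vanish by disjointness) and using the symmetry of $J$, I obtain $\mathcal{E}(f,g)=-2\int f(x)g(y)\,J(d(x,y))$. Carrying out the same manipulation in the semigroup approximation (\ref{E:sgapprox}) and using the symmetry of $\Pi_t$ gives $\mathcal{E}(f,g)=-\lim_{t\to0}\int f(x)g(y)\,\Pi_t(dxdy)$. Comparing the two expressions yields $\lim_{t\to0}\nu_t(f\otimes g)=\nu(f\otimes g)$, the asserted convergence on the building blocks.

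\emph{Density and a uniform mass bound.} Next I would pass to arbitrary $a\in C_c(X\times X\setminus\diag)$. Writing $K:=\supp a$, which is compact and satisfies $\varrho>0$ on it, I cover $K$ by finitely many open rectangles $A_i\times B_i$ with $\dist(A_i,B_i)>0$. Regularity of $(\mathcal{E},\mathcal{F})$ makes $\mathcal{C}$ a uniformly dense subalgebra of $C_c(X)$ containing Urysohn-type cut-offs, so $\mathcal{C}\otimes\mathcal{C}$ is uniformly dense in $C_c(A\times B)$; using a partition of unity subordinate to the rectangles I approximate $a$ uniformly by a finite sum $a'=\sum_k f_k\otimes g_k$ with $\dist(\supp f_k,\supp g_k)>0$ and supports contained in a fixed compact $K'\supset K$ away from the diagonal. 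To turn uniform approximation into convergence of integrals I first record a uniform mass bound: choosing nonnegative $f_i,g_i\in\mathcal{C}$ with $f_i\otimes g_i\equiv1$ on pieces covering $K'$ and setting $\phi:=\sum_i f_i\otimes g_i\ge\mathbf{1}_{K'}$, the building-block step gives $\nu_t(\phi)\to\nu(\phi)<\infty$, hence $\sup_{t\le t_0}\nu_t(K')\le\sup_{t\le t_0}\nu_t(\phi)<\infty$. A routine triangle-inequality estimate, controlling $|\nu_t(a-a')|\le\|a-a'\|_{\sup}\,\nu_t(K')$, then transfers the convergence from $a'$ to $a$ and establishes $\lim_{t\to0}\nu_t(a)=\nu(a)$ for every $a\in C_c(X\times X\setminus\diag)$.

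\emph{Inserting the cut-off; the main obstacle.} It remains to insert $\mathbf{1}_{\{\varrho>\varepsilon\}}$. If $\varepsilon<\dist(\supp a,\diag)$ the indicator equals $1$ throughout $\supp a$ and the assertion is immediate from the vague convergence just proved. In general I would sandwich the indicator between continuous functions $\chi_\eta^-\le\mathbf{1}_{\{\varrho>\varepsilon\}}\le\chi_\eta^+$ of $\varrho(x,y)$ that differ from it only on the shell $\{|\varrho-\varepsilon|<\eta\}$; writing $a=a^+-a^-$, the products $a^\pm\chi_\eta^\mp$ lie in $C_c(X\times X\setminus\diag)$, and positivity of $\nu_t$ gives the sandwich $\nu_t(a\chi_\eta^-)\le\nu_t(a\mathbf{1}_{\{\varrho>\varepsilon\}})\le\nu_t(a\chi_\eta^+)$ for $a\ge0$. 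Sending $t\to0$ through the vague convergence and then $\eta\to0$ squeezes the bounds together, and here lies the main obstacle: the two outer limits converge to $\nu(a\mathbf{1}_{\{\varrho>\varepsilon\}})$ and $\nu(a\mathbf{1}_{\{\varrho\ge\varepsilon\}})$ respectively, so the procedure pins down the limit precisely when the sphere is $J$-null, i.e.\ $J(\{\varrho=\varepsilon\}\cap\supp a)=0$. This holds for all but at most countably many radii $\varepsilon$, since only countably many concentric spheres can carry positive $J$-mass inside a fixed compact set, which is what is needed when $\varepsilon\to0$ in the sequel; for an exceptional radius the open-region integral is not determined by vague convergence alone and would require a separate argument via outer regularity of $J$.
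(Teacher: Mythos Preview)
The paper gives no proof of this lemma at all: it simply declares ``the next two results are straightforward'' and moves on, treating the vague convergence $\tfrac12\Pi_t\to J$ on $X\times X\setminus\diag$ as a known fact (it is essentially the characterisation of the jump measure in \cite[Section 3.2]{FOT94}). So there is nothing to compare your argument against except the implicit claim that the statement is routine.

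Your argument for the vague convergence itself (the first two paragraphs) is correct and is exactly the standard route: reduce to products $f\otimes g$ with disjoint supports, use strong locality to kill $\mathcal{E}_c(f,g)$, match the two sides via (\ref{E:sgapprox}) and (\ref{E:BD}), then pass to general $a\in C_c(X\times X\setminus\diag)$ by Stone--Weierstrass together with a uniform local mass bound. That part needs no change.

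Your third paragraph raises a genuine point, and your diagnosis is accurate: for an arbitrary $\varepsilon>0$ the sandwich only pins down the limit when $J(\{\varrho=\varepsilon\}\cap\supp a)=0$, and vague convergence alone cannot resolve the sphere contribution otherwise. However, this is not really an obstacle for the lemma as the paper uses it. Since $a$ has compact support in $X\times X\setminus\diag$, there is always $\varepsilon_0:=\dist(\supp a,\diag)>0$, and for every $\varepsilon<\varepsilon_0$ the indicator is identically $1$ on $\supp a$, so the cut-off is redundant and the assertion reduces to the vague convergence you already established. All subsequent uses of the lemma in the paper (the Corollary, formula (\ref{E:limlocalpart}), Lemma~\ref{L:magneticBD}) send $\varepsilon\to 0$, so only this regime matters. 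In other words, the $\varepsilon$ in the lemma is best read as shorthand for ``any $\varepsilon$ small enough that $\supp a\subset\{\varrho>\varepsilon\}$''; your proof covers exactly that, and the exceptional-radius issue you flag, while mathematically real, is immaterial for the paper's purposes.
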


\begin{corollary}
For any $f,g\in\mathcal{C}$ we have
\[\mathcal{E}_j(f,g)=\lim_{\varepsilon\to 0}\lim_{t\to 0}\frac12\int\int_{\left\lbrace \varrho(x,y)>\varepsilon\right\rbrace}(f(x)-f(y))(g(x)-g(y))\Pi_t(dxdy)\]
and consequently also
\[\mathcal{E}_c(f,g)=\lim_{\varepsilon\to 0}\lim_{t\to 0}\frac12\int\int_{\left\lbrace \varrho(x,y)\leq \varepsilon\right\rbrace}(f(x)-f(y))(g(x)-g(y))\Pi_t(dxdy).\]
\end{corollary}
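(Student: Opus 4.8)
The plan is to reduce both identities to the single statement that, for each fixed $\varepsilon>0$ and all $f,g\in\mathcal{C}$,
\begin{equation*}
\lim_{t\to0}\frac12\int\int_{\{\varrho(x,y)>\varepsilon\}}(f(x)-f(y))(g(x)-g(y))\,\Pi_t(dxdy)=\int_{\{\varrho(x,y)>\varepsilon\}}(f(x)-f(y))(g(x)-g(y))\,J(d(x,y)). \tag{$\ast$}
\end{equation*}
Granting $(\ast)$, the first claimed identity follows by letting $\varepsilon\to0$ on its right-hand side: the integrand is dominated by $\tfrac12((f(x)-f(y))^2+(g(x)-g(y))^2)$, which lies in $L^1(J)$ since $\int\int(f(x)-f(y))^2J=\mathcal{E}_j(f)$ and $\int\int(g(x)-g(y))^2J=\mathcal{E}_j(g)$ are finite by (\ref{E:BD}); dominated convergence yields $\mathcal{E}_j(f,g)$. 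For the second identity I would split the full integral over $X\times X$ into the pieces over $\{\varrho>\varepsilon\}$ and $\{\varrho\le\varepsilon\}$, use (\ref{E:sgapprox}) for the limit of the full integral and $(\ast)$ for the first piece; then the limit of the $\{\varrho\le\varepsilon\}$-piece exists and equals $\mathcal{E}(f,g)-\int_{\{\varrho>\varepsilon\}}(f(x)-f(y))(g(x)-g(y))J$, and letting $\varepsilon\to0$ together with (\ref{E:BD}) gives $\mathcal{E}_c(f,g)$.

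To prove $(\ast)$ it suffices, by polarisation, to treat $f=g$, where the integrand is nonnegative. The obstruction to applying Lemma \ref{L:vague} directly is that $(f(x)-f(y))^2$ is not compactly supported on $X\times X\setminus\diag$: although $f$ has compact support, this function is constant and generically nonzero along the tails $\{x\in\supp f,\ y\to\infty\}$. I would therefore fix a cutoff $\theta_R(x,y)=\psi_R(x)\psi_R(y)$ with $\psi_R\in\mathcal{C}$ equal to $1$ on a large ball $B_R\supseteq\supp f$, and additionally multiply by a continuous near-diagonal cutoff $\eta_\varepsilon$ (with $\eta_\varepsilon=1$ on $\{\varrho\ge\varepsilon\}$, $\eta_\varepsilon=0$ on $\{\varrho\le\varepsilon/2\}$), so that $(f(x)-f(y))^2\eta_\varepsilon\theta_R\in C_c(X\times X\setminus\diag)$ and coincides with $(f(x)-f(y))^2\theta_R$ on $\{\varrho>\varepsilon\}$. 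Lemma \ref{L:vague} then gives, for each fixed $R$, that $\tfrac12\int\int_{\{\varrho>\varepsilon\}}(f(x)-f(y))^2\theta_R\,\Pi_t$ converges to $I_R:=\int_{\{\varrho>\varepsilon\}}(f(x)-f(y))^2\theta_R\,J$, and $I_R\uparrow\int_{\{\varrho>\varepsilon\}}(f(x)-f(y))^2J$ as $R\to\infty$ by monotone convergence. Since $\theta_R\le1$, this already yields $\liminf_{t\to0}\tfrac12\int\int_{\{\varrho>\varepsilon\}}(f(x)-f(y))^2\Pi_t\ge\int_{\{\varrho>\varepsilon\}}(f(x)-f(y))^2J$.

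The main obstacle is the matching upper bound, i.e. controlling the remainder $\mathcal{R}_R(t):=\tfrac12\int\int_{\{\varrho>\varepsilon\}}(f(x)-f(y))^2(1-\theta_R)\,\Pi_t\ge0$ uniformly as $t\to0$; this is precisely the statement that no $\Pi_t$-mass escapes to spatial infinity, and it is where conservativeness enters. Using symmetry of $\Pi_t$ together with $(f(x)-f(y))^2=f(y)^2$ on $\{x\notin B_R\}$, one bounds $\mathcal{R}_R(t)\le\frac1t\int f^2\,P_t(\mathbf{1}_{X\setminus B_R})\,d\mu$. Choosing $\phi_R\in\mathcal{C}$ with $\phi_R=1$ on $\supp f$ and $\supp\phi_R\subseteq B_R$, conservativeness ($P_t\mathbf{1}=\mathbf{1}$) gives $P_t\mathbf{1}_{X\setminus B_R}\le\mathbf{1}-P_t\phi_R$, and since $\phi_R\equiv1$ on $\supp f$ the bound collapses to $\mathcal{R}_R(t)\le\frac1t\langle f^2,(I-P_t)\phi_R\rangle=\frac12\int\int(f^2(x)-f^2(y))(\phi_R(x)-\phi_R(y))\,\Pi_t$.

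By (\ref{E:sgapprox}), applied to $f^2,\phi_R\in\mathcal{F}$ (using that $\mathcal{F}\cap L^\infty$ is an algebra), the last expression converges as $t\to0$ to $\mathcal{E}(f^2,\phi_R)$, whose strongly local part vanishes by strong locality since $\phi_R$ is constant near $\supp f$, leaving the jump cross-term $\mathcal{E}_j(f^2,\phi_R)$. This term tends to $0$ as $R\to\infty$: its integrand is supported on long jumps $\{x\in\supp f,\ y\notin B_R\}$ and the symmetric set, where $f^2(y)=0$, so it is dominated by $(f(x)-f(y))^2\in L^1(J)$ and vanishes pointwise, whence dominated convergence applies. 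Thus $\limsup_{t\to0}\mathcal{R}_R(t)\le\mathcal{E}_j(f^2,\phi_R)\to0$, which combined with the lower bound establishes $(\ast)$ for $f=g$; polarisation then finishes the proof (the bilinear remainder being handled by Cauchy--Schwarz in $L^2(\Pi_t)$). I expect the delicate point to be exactly this uniform-in-$t$ tightness: conservativeness is what prevents energy from leaking to infinity, and the naive estimate $\mathcal{E}_j(f^2)^{1/2}\mathcal{E}_j(\phi_R)^{1/2}$ is too crude because $\mathcal{E}(\phi_R)$ need not be small — it is essential to retain the cross term $\mathcal{E}_j(f^2,\phi_R)$ and exploit the disjointness of the regions where $f$ and $\phi_R$ vary.
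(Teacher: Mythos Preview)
The paper does not actually prove this corollary; it simply asserts that both Lemma~\ref{L:vague} and the corollary are ``straightforward.'' Your argument fills in the nontrivial details that this claim glosses over, and the overall strategy --- reduce to $(\ast)$, obtain the lower bound from Lemma~\ref{L:vague} via a spatial cutoff $\theta_R$, and control the remainder $\mathcal{R}_R(t)$ by rewriting it as a semigroup-approximating form applied to $(f^2,\phi_R)$ --- is sound.

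There is one genuine slip in the specification of $\phi_R$. You require only $\phi_R=1$ on $\supp f$ and $\supp\phi_R\subseteq B_R$, but then claim the integrand of $\mathcal{E}_j(f^2,\phi_R)$ is supported on ``long jumps $\{x\in\supp f,\ y\notin B_R\}$''. That does not follow: the integrand is nonzero wherever $\phi_R$ varies, and under your stated constraint $\phi_R$ could vary arbitrarily close to $\supp f$ (in a way independent of $R$), so $\mathcal{E}_j(f^2,\phi_R)$ need not tend to zero. The fix is to strengthen the choice: take $\phi_R=1$ on $B_{R/2}$, still with $\supp\phi_R\subseteq B_R$. Then for $R$ large enough that $\supp f\subset B_{R/2}$, the integrand of $\mathcal{E}_j(f^2,\phi_R)$ is supported on $\{x\in\supp f,\ y\notin B_{R/2}\}$ and its symmetric copy, is dominated there by $(f(x)-f(y))^2\in L^1(J)$, and dominated convergence gives $\mathcal{E}_j(f^2,\phi_R)\to0$. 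With this correction your proof is complete. (Incidentally, conservativeness is not essential for the tightness step: the inequality $P_t\mathbf{1}_{X\setminus B_R}\le\mathbf{1}-P_t\phi_R$ holds for any sub-Markovian $P_t$, and $\tfrac{1}{t}\langle f^2,(I-P_t)\phi_R\rangle\to\mathcal{E}(f^2,\phi_R)$ is the standard semigroup approximation.)
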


Let $\mathcal{D}$ be the complexification of $\mathcal{C}$ if $X$ is compact, otherwise use its unitisation. On $\mathcal{D}\otimes\mathcal{D}$ we can introduce a Hilbert seminorm by
\begin{align}\label{E:Hseminorm}
\left\|f\otimes g\right\|_{\mathcal{H}}&=\lim_{t\to 0}\frac12\left\|(df)g\right\|_{L^2(X\times X,\Pi_t)}\notag\\
&=\lim_{t\to 0}\left(\frac12\int_X\int_X|g(y)|^2|f(x)-f(y)|^2\Pi_t(dxdy)\right)^{1/2}
\end{align}
and bilinear extension (of the corresponding bilinear form). Let $\mathcal{H}$ denote the Hilbert space obtained by factoring out zero seminorm elements and completing, see \cite{CS03}. Then by construction $\Omega^1(\mathcal{D})$ is dense in $\mathcal{H}$. Again the $\mathcal{H}$-class of $df$ is $\partial f:=f\otimes \mathbf{1}$. Right and left actions of $\mathcal{D}$ on $\mathcal{H}$ can be defined as in (\ref{E:rightaction}) and (\ref{E:leftaction}), and the estimates (\ref{E:rightactionbound}) and (\ref{E:leftactionbound}) remain valid. The Leibniz rule (\ref{E:Leibniz}) holds, and we observe $\mathcal{E}(f,g)=\left\langle \partial f, \partial g\right\rangle_\mathcal{H}$, $f,g\in\mathcal{C}$. The operator $\partial$ extends to a densely defined closed unbounded operator $\partial: L_2(X,\mu)\to\mathcal{H}$ with domain $\mathcal{F}$. Let $\partial^\ast$ denote its adjoint. That is, $\omega\in\mathcal{H}$ is in $\dom \partial^\ast$ if there is some $g\in L_2(X,\mu)$ such that for all $f\in\mathcal{C}$ we have $\left\langle \partial f,\omega\right\rangle_{\mathcal{H}}=\left\langle f, g\right\rangle_{L^2(X,\mu)}$, and in this case, we set $\partial^\ast \omega:=g$. For all $\omega\in dom\:\partial^\ast$ we then have the integration by parts identity
\begin{equation}\label{E:IbP}
\left\langle \partial f,\omega\right\rangle_{\mathcal{H}}=\left\langle f, \partial^\ast \omega \right\rangle_{L^2(X,\mu)},\ \ f\in\mathcal{F}.
\end{equation}

Using (\ref{E:Hseminorm}) we can define the antisymmetrizer $\mathcal{A}$ as an orthogonal projection in $\mathcal{H}$, and we denote its image $\mathcal{A}(\mathcal{H})$ by $\mathcal{H}_a$. To $\mathcal{H}_a$ we refer as the \emph{space of differential $1$-forms associated with $(\mathcal{E},\mathcal{F})$}. Again the left and right actions of $\mathcal{D}$ on $\mathcal{H}_a$ coincide. The space $\Omega^1_a(\mathcal{D})$ is dense in $\mathcal{H}_a$.

The symmetry of the measures $\Pi_t$ implies
\[(\partial^\ast a)(x)=\lim_{t\to 0}\frac{1}{t}\int_X a(x,y)P_t(x,dy),\ \ a\in \Omega^1_a(\mathcal{D})\cap \dom \partial^\ast. \]
For the infinitesimal generator $(L,\dom L)$ of $(\mathcal{E},\mathcal{F})$ we have
\begin{equation}\label{E:genformula}
Lf(x)=\lim_{t\to 0}\frac{1}{t}\int_X (f(y)-f(x))P_t(x,dy), \ \ f\in\dom L,
\end{equation}
and $Lf=-\partial^\ast\partial f$. Further details can be found in \cite[Section 3]{HRT} (although with a different sign convention). As $\mathcal{H}_a$ is a generalization of the $L_2$-space of differential forms on Riemannian manifolds, we interpret the elements of $\mathcal{H}_a$ also as \emph{$L_2$-vector fields} (with the Riesz representation theorem  in mind), and the operators $\partial$ and $\partial^\ast$ as abstract \emph{gradient} and \emph{divergence} operators. 

The image $Im\:\partial=\left\lbrace \partial u: u\in\mathcal{F}\right\rbrace$ 
of the derivation is a closed subspace of $\mathcal{H}$, see for instance \cite[Section 4]{HKT}. Therefore $\mathcal{H}$ decomposes orthogonally into the range $Im\:\partial$ of $\partial$ and its orthogonal complement $(Im\:\partial)^\bot$ in $\mathcal{H}$, which by (\ref{E:IbP}) equals the kernel $\ker \partial^\ast$ of $\partial^\ast$,
\begin{equation}\label{E:Hodge}
\mathcal{H}=Im\:\partial\oplus \ker \partial^\ast.
\end{equation}
Any $\omega\in\mathcal{H}$ uniquely decomposes $\omega=\partial u+\eta$ where $u\in\mathcal{F}$ and $\eta\in \ker \partial^\ast$. Note also that for $f,g\in\mathcal{C}$ we have
\begin{equation}\label{E:energymeasures}
\left\|f\otimes g\right\|_{\mathcal{H}}^2=\int_X|g|^2d\Gamma(f)=\int_X |g|^2 d\Gamma_c(f)+\int_{X\times X\setminus \diag}|g(y)|^2|f(x)-f(y)|^2J(d(x,y))
\end{equation}
where $\Gamma(f)$ denotes the \emph{energy measure of $f\in\mathcal{C}$}, \cite{LeJan78, Si74, FOT94}, and $\Gamma_c(f)$ its strongly local part, see \cite[Lemma 3.5]{CKS87} and \cite[Section 3.2]{FOT94}. For any $\omega\in\mathcal{H}$ there is a nonnegative Radon measure $\Gamma_\mathcal{H}(\omega)$ satisfying
\begin{equation}\label{E:GammaH}
\int_X\varphi d\Gamma_\mathcal{H}(\omega)=\left\langle \varphi \omega, \omega\right\rangle_\mathcal{H},\ \ \varphi\in\mathcal{C},
\end{equation}
\cite[Section 2]{HRT}, note that $\Gamma_\mathcal{H}(\partial u)=\Gamma(u)$, $u\in\mathcal{C}$. Lemma \ref{L:vague} implies
$\left\|\omega\right\|_{\mathcal{H}}^2=\left\|\omega\right\|_{\mathcal{H}_c}^2+\left\|\omega\right\|_{\mathcal{H}_j}^2$, $\omega\in\Omega^1(\mathcal{D})$, where
\begin{equation}\label{E:limlocalpart}
\left\|\omega\right\|_{\mathcal{H}_c}^2=\lim_{\varepsilon\to 0}\lim_{t\to 0}\frac{1}{2}\int\int_{\left\lbrace \varrho(x,y)\leq\varepsilon\right\rbrace}|\omega(x,y)|^2\Pi_t(dxdy).
\end{equation}
and 
\[\left\|\omega\right\|_{\mathcal{H}_j}^2:=\int_{X\times X\setminus\diag} |\omega(x,y)|^2 J(d(x,y)).\]
This induces the orthogonal decomposition 
\begin{equation}\label{E:BDH}
\mathcal{H}=\mathcal{H}_c\oplus\mathcal{H}_j
\end{equation}
of $\mathcal{H}$, where $\mathcal{H}_c:=\ker \left\|\cdot\right\|_{\mathcal{H}_j}$ and $\mathcal{H}_j$ is its orthogonal complement. Each $\omega\in\mathcal{H}$ may therefore uniquely be written as $\omega=\omega_c+\omega_j$ with $\omega_c\in\mathcal{H}_c$ and $\omega_j\in\mathcal{H}_j$. The nonlocal part $\omega_j$ may be viewed as a member of $L^2(X\times X\setminus\diag, J)$. By (\ref{E:limlocalpart}) the local part $\omega_c$ vanishes outside any given neighborhood of the diagonal, and the space $\mathcal{H}_c$ is seen to be invariant under the antisymmetrizer $\mathcal{A}$. Consequently  $\omega\in\mathcal{H}$ is an element of $\mathcal{H}_a$ if and only if $\omega_j$ is antisymmetric. For $\omega=\partial u$ we use the notation $\partial u=\partial_cu+\partial_ju$. Note that 
$\mathcal{E}_c(f,g)=\left\langle \partial_c f, \partial_c g\right\rangle_\mathcal{H}$ and $\mathcal{E}_j(f,g)=\left\langle \partial_j f, \partial_j g\right\rangle_\mathcal{H}$.
The element $\partial_jf$ is represented by $df$, seen as an element of $L^2(X\times X\setminus \diag, J)$. By
\begin{equation}\label{E:GammaHc}
\int_X\varphi d\Gamma_{\mathcal{H},c}(\omega)=\left\langle \varphi \omega, \omega\right\rangle_{\mathcal{H}_c},\ \ \varphi\in\mathcal{C},
\end{equation}
we obtain the local part $\Gamma_{\mathcal{H},c}(\omega)$ of $\Gamma_{\mathcal{H}}(\omega)$ of $\omega\in\mathcal{H}$. In particular, $\Gamma_{\mathcal{H},c}(\partial u)=\Gamma_c(u)$, $u\in\mathcal{C}$.

\begin{remark} If the Dirichlet form $(\mathcal{E},\mathcal{F})$ is of pure jump type, i.e. $\mathcal{E}_c\equiv 0$, then $\mathcal{H}=\mathcal{H}_j$ and with the jump measure $J$ as in (\ref{E:BD}) we are in a situation discussed in Section \ref{S:jumps}. If on the other hand $(\mathcal{E},\mathcal{F})$ is strongly local, i.e. $\mathcal{E}_j\equiv 0$, then $\mathcal{H}=\mathcal{H}_c$.
\end{remark}

Also $\partial_c$ and $\partial_j$ are densely defined and have adjoints $\partial_c^\ast$ and $\partial_j^\ast$. In particular, 
\[\left\langle \partial^\ast_ja, f\right\rangle_{L_2(X,\mu)}=2\int_{X\times X\setminus\diag}f(x)a_j(x,y)J(d(x,y))\]
for any $a\in \mathcal{H}_a\cap\dom\partial_j^\ast$ and if $J(dxdy)=\frac12n(x,dy)\mu(dx)$ with a kernel $n(x,dy)$ on $(X,\mathcal{B}(X))$,
\[\partial_j^\ast a(x)=\int_X a_j(x,y)n(x,dy).\]
In this case $Lf=L_cf+L_jf$, $f\in \dom L$, where $L_c$ is strongly local and
\begin{equation}\label{E:Lj}
L_jf(x)=\int_X (f(y)-f(x))n(x,dy). 
\end{equation}

Together the orthogonal decompositions (\ref{E:Hodge}) and (\ref{E:BDH}) yield
\begin{equation}\label{E:overlay}
\mathcal{H}=Im\:\partial_c\oplus Im\:\partial_j\oplus \ker_{\mathcal{H}_c} \partial_c^\ast\oplus \ker_{\mathcal{H}_j} \partial_j^\ast,
\end{equation}
where $\ker_{\mathcal{H}_c} \partial_c^\ast=\left\lbrace \omega\in\mathcal{H}_c: \partial_c^\ast \omega=0\right\rbrace$ and $\ker_{\mathcal{H}_j} \partial_j^\ast$ is defined similarly.

\begin{examples}\label{Ex:sum}\mbox{}
\begin{enumerate}
\item[(i)] Consider $X=\mathbb{R}^n$, $\mathcal{F}=H^1(\mathbb{R}^n)$ and
\[\mathcal{E}(f)=\frac12 \int_{\mathbb{R}^n} |\nabla f|^2 dx,\ \ f\in H^1(\mathbb{R}^n).\]
Then $(P_t)_{t>0}$ is the \emph{Gauss-Weierstrass semigroup} with generator $(\frac12\Delta, H^2(\mathbb{R}^n))$ and the space $\mathcal{H}=\mathcal{H}_c$ is isometrically isomorphic to $L^2(\mathbb{R}^n,\mathbb{R}^n)$. Up to this isometry, the operator $\partial$ 
coincides with the exterior derivative acting on functions, $f\mapsto df$. Interpreting the elements of $\mathcal{H}$ as vector fields, $\partial$ coincides with the gradient $f\mapsto \nabla f$ and $\partial^\ast$ agrees with minus half the divergence $v\mapsto -\frac12\diverg v$.
\item[(ii)] Let $X=\mathbb{R}^n$, $0<\alpha<2$, $\mathcal{F}=H^1(\mathbb{R}^n)$ and
\[\mathcal{E}(f)=\frac12\int_{\mathbb{R}^n} |\nabla f|^2 dx+\frac12\int_{\mathbb{R}^n}\int_{\mathbb{R}^n}\frac{|f(x)-f(y)|^2}{|x-y|^{n+\alpha}}\:dxdy,\ \ f\in H^1(\mathbb{R}^n).\]
Then the local part $\partial_cf$ of a gradient $\partial f$ may be identified with $\nabla f$ and the non-local part $\partial_jf$ with the difference operator $df$. The local part $\partial^\ast_c v$ of the divergence $\partial^\ast v$ of a vector field $v\in\mathcal{H}_a$ is represented by $-\frac12\diverg v$, and for its non-local part we have
\[\partial^\ast_jv(x)=\int_{\mathbb{R}^n}\frac{v_j(x,y)}{|x-y|^{n+\alpha}}dy.\]
\end{enumerate}
\end{examples}

Given a real valued function $a\in \Omega^1_a(\mathcal{D})$ consider the \emph{energy form $\mathcal{E}^a$ with magnetic potential $a$} given by
\[\mathcal{E}^a(f):=\lim_{t\to 0}\frac{1}{2}\int_X\int_X|f(x)-e^{ia(x,y)}f(y)|^2\Pi_t(dxdy), \ \ f\in\mathcal{C}.\]
We obtain an analog of Lemma \ref{L:boundedjumpcase}.

\begin{lemma}\label{L:boundedgeneralcase}
Let $a\in\Omega^1_a(\mathcal{D})$ be real valued. Then we have
\begin{equation}\label{E:firstestimate}
\mathcal{E}^a(f)\leq 4(\mathcal{E}(f)+\left\|f\right\|_{\sup}^2\left\|a\right\|_{\mathcal{H}}^2)
\end{equation}
and by polarization $\mathcal{E}^a$ defines a conjugate symmetric bilinear form on $\mathcal{C}$. If $\widetilde{a}\in \Omega^1_a(\mathcal{D})$ is another real valued function, then
\begin{equation}\label{E:difference}
|\mathcal{E}^a(f)-\mathcal{E}^{\widetilde{a}}(f)|\leq 4(\mathcal{E}(f)+\left\|f\right\|_{\sup}^2\left\|a\right\|_{\mathcal{H}}^2+\left\|f\right\|_{\sup}^2\left\|\widetilde{a}\right\|_{\mathcal{H}}^2)^{1/2}\left\|f\right\|_{\sup}\left\|a-\widetilde{a}\right\|_{\mathcal{H}}
\end{equation}
\end{lemma}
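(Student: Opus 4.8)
The plan is to mimic the proof of Lemma~\ref{L:boundedjumpcase}, but with the fixed jump measure $J$ replaced by the approximating measures $\Pi_t$ and with a passage to the limit $t\to 0$, exploiting that the $\mathcal{H}$-seminorm (\ref{E:Hseminorm}) is itself defined as exactly such a limit. Throughout I would use the elementary bounds $|e^{i\theta}-1|\le|\theta|$ and $|e^{i\theta}-e^{i\phi}|\le|\theta-\phi|$ for real $\theta,\phi$, together with the right action of $\mathcal{D}$ on $\mathcal{H}$ and its norm bound (\ref{E:rightactionbound}). The starting point is the pointwise decomposition on $X\times X\setminus\diag$,
\[
f(x)-e^{ia(x,y)}f(y)=(f(x)-f(y))-(e^{ia(x,y)}-1)f(y),
\]
whose two summands represent, respectively, $\partial f$ and, after using $|e^{ia}-1|\le|a|$, the element $af$ of the right action, represented by $(x,y)\mapsto f(y)a(x,y)$.

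For the estimate (\ref{E:firstestimate}) I would apply $|u-v|^2\le 4|u|^2+4|v|^2$ to this decomposition and integrate against $\tfrac12\Pi_t$, obtaining
\[
\tfrac12\int_X\int_X|f(x)-e^{ia}f(y)|^2\Pi_t(dxdy)\le 2\int_X\int_X|f(x)-f(y)|^2\Pi_t(dxdy)+2\int_X\int_X|f(y)|^2|a(x,y)|^2\Pi_t(dxdy)
\]
for every $t$. By (\ref{E:sgapprox}) the first term on the right converges to $4\mathcal{E}(f)$, and by (\ref{E:Hseminorm}) applied to $af\in\Omega^1(\mathcal{D})$ the second converges to $4\|af\|_{\mathcal{H}}^2\le 4\|f\|_{\sup}^2\|a\|_{\mathcal{H}}^2$, where the last inequality is (\ref{E:rightactionbound}); once the limit on the left is known to exist, this yields (\ref{E:firstestimate}). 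Conjugate symmetry of $\mathcal{E}^a$ is inherited from the inner product of $L^2(X\times X,\Pi_t)$ and is preserved under the limit, so polarization produces the asserted sesquilinear form.

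The genuine point is the existence of the limit defining $\mathcal{E}^a(f)$, which the inequality above only bounds along subsequences, and this is where I expect the main obstacle. Here I would expand
\[
|f(x)-e^{ia}f(y)|^2-|f(x)-f(y)|^2=2(1-\cos a)\,\Re(\overline{f(x)}f(y))+2\sin a\,\Im(\overline{f(x)}f(y)),
\]
and note that $\Im(\overline{f(x)}f(y))=\Im(-\overline{f(x)}(f(x)-f(y)))$ vanishes on $\diag$ while $1-\cos a=O(a^2)$ and $\sin a = a+O(a^3)$. Since $a\in\Omega^1_a(\mathcal{D})$ is a finite combination of single difference factors, the right-hand side is, near the diagonal, a finite combination of products of two difference factors $(f_i(x)-f_i(y))(f_j(x)-f_j(y))$ with bounded continuous coefficients, plus a remainder dominated by products of three difference factors. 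The two-difference terms converge under $\tfrac12\int_X\int_X\,\cdot\,\Pi_t$ to the corresponding energy-measure integrals (the weighted form of the convergence underlying (\ref{E:energymeasures}) and the corollary to Lemma~\ref{L:vague}); the cubic remainder vanishes in the iterated limit $\lim_{\varepsilon\to0}\lim_{t\to0}$ over $\{\varrho\le\varepsilon\}$ by uniform continuity of the $f_i$; and the contribution from $\{\varrho>\varepsilon\}$ converges by Lemma~\ref{L:vague} after a cut-off and Cauchy--Schwarz. Assembling these iterated limits into a single limit through the Beurling--Deny splitting of Lemma~\ref{L:vague} and its corollary is the delicate step.

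Finally, for the difference estimate (\ref{E:difference}) I would set $u:=f(x)-e^{ia}f(y)$ and $v:=f(x)-e^{i\widetilde{a}}f(y)$ and use the Hilbert-space identity $\|u\|^2-\|v\|^2=\Re\langle u-v,u+v\rangle$ in $L^2(X\times X,\Pi_t)$, giving
\[
\tfrac12\big|\,\|u\|_{L^2(\Pi_t)}^2-\|v\|_{L^2(\Pi_t)}^2\,\big|\le\tfrac12\|u-v\|_{L^2(\Pi_t)}\big(\|u\|_{L^2(\Pi_t)}+\|v\|_{L^2(\Pi_t)}\big).
\]
Since $u-v=(e^{i\widetilde{a}}-e^{ia})f(y)$ satisfies $|u-v|\le|a-\widetilde{a}|\,|f(y)|$, the factor $(\tfrac12\|u-v\|^2)^{1/2}$ is controlled by $\|f\|_{\sup}\|a-\widetilde{a}\|_{\mathcal{H}}$ exactly as in the second step, while $(\tfrac12\|u\|^2)^{1/2}\to\mathcal{E}^a(f)^{1/2}$ and $(\tfrac12\|v\|^2)^{1/2}\to\mathcal{E}^{\widetilde{a}}(f)^{1/2}$ are bounded through (\ref{E:firstestimate}). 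Passing to the limit $t\to0$ and using $\sqrt{P}+\sqrt{Q}\le\sqrt2\,\sqrt{P+Q}$ to merge the two square roots, the constants combine to exactly the factor $4$ in (\ref{E:difference}).
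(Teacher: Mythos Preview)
Your proof is correct and follows essentially the same route as the paper: the bound \eqref{E:firstestimate} via the decomposition $f(x)-e^{ia}f(y)=(f(x)-f(y))-(e^{ia}-1)f(y)$ together with $|e^{ia}-1|\le|a|$, and the difference estimate \eqref{E:difference} via $|u-v|$ controlling the gap between $\|u\|^2$ and $\|v\|^2$ and then Cauchy--Schwarz. The paper uses the pointwise factorization $\big||u|^2-|v|^2\big|=\big||u|-|v|\big|\,(|u|+|v|)$ with the triangle inequality in place of your Hilbert-space identity $\|u\|^2-\|v\|^2=\Re\langle u-v,u+v\rangle$; these are equivalent here and produce the same constant $4$.

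Your extended discussion of why the defining limit $\mathcal{E}^a(f)=\lim_{t\to0}\tfrac12\int\int|f(x)-e^{ia}f(y)|^2\,\Pi_t$ actually exists is additional rigor the paper does not supply in this lemma; the paper simply posits the limit and proves the estimates, deferring the finer diagonal/off-diagonal analysis to the Beurling--Deny identity in Lemma~\ref{L:magneticBD}. Your expansion into two-difference terms (converging via energy measures) plus a cubic remainder (vanishing in the iterated $\varepsilon,t$ limit) is the right mechanism and in fact anticipates the argument used there.
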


\begin{proof}
The estimate (\ref{E:firstestimate}) follows  as in the proof of Lemma \ref{L:boundedjumpcase}. It allows to view $\mathcal{E}^a$ as a bilinear form on $\mathcal{C}$ by polarization. The conjugate symmetry of $\mathcal{E}^a$ follows from the conjugate symmetry of the scalar products in the spaces $L^2(X\times X, \Pi_t)$, $t>0$. To see (\ref{E:difference}) note that 
\begin{multline}
|\mathcal{E}^a(f)-\mathcal{E}^{\widetilde{a}}(f)|\leq \lim_{t\to 0}\frac12\int_X\int_X\left||f(x)-e^{ia(x,y)}f(y)|-|f(x)-e^{i\widetilde{a}(x,y)}f(y)|\right|\times\\
\times \left||f(x)-e^{ia(x,y)}f(y)|+|f(x)-e^{i\widetilde{a}(x,y)}f(y)|\right|\:\Pi_t(dxdy),
\end{multline}
which by the triangle inequality and Cauchy-Schwarz does not exceed
\begin{multline}
\left(\lim_{t\to 0}\frac12\int_X\int_X|e^{i\widetilde{a}(x,y)}-e^{ia(x,y)}|^2|f(y)|^2\Pi_t(dxdy)\right)^{1/2}\times\\
\left(\lim_{t\to 0}\frac12\int_X\int_X\left|2|f(x)-f(y)|+|f(y)||e^{ia(x,y)}-1|+|f(y)||e^{i\widetilde{a}(x,y)}-1|\right|^2\Pi_t(dxdy)\right)^{1/2}.
\end{multline}
Using elementary estimates as in the proof of Lemma \ref{L:boundedjumpcase} we then arrive at (\ref{E:difference}).
\end{proof}

Recall that $\Omega^1(\mathcal{D})$ is dense in $\mathcal{H}$. We call an element $\omega$ of $\mathcal{H}$ \emph{real} if there is a sequence $(\omega_n)_n$ of real valued functions $\omega_n\in \Omega^1(\mathcal{D})$ such that $\omega=\lim_n\omega_n$ in $\mathcal{H}$. Given real $a\in\mathcal{H}_a$ we set
\[\mathcal{E}^a(f):=\lim_n\mathcal{E}^{a_n}(f), \ \ f\in\mathcal{C},\]
where $(a_n)_n$ is a sequence of real valued functions $a_n\in\Omega_a^1(\mathcal{D})$ such that $a=\lim_n a_n$ in $\mathcal{H}$. By (\ref{E:difference}) the definition of $\mathcal{E}^a$ is correct, i.e. $\mathcal{E}^a(f)$ does not depend on the choice of the sequence $(a_n)_n$. By approximation the next corollary is immediate.

\begin{corollary}\label{C:boundedgeneralcase}
Let $a, \widetilde{a}\in \mathcal{H}_a$ be real. Then $\mathcal{E}^a$ and $\mathcal{E}^{\widetilde{a}}$ satisfy the estimates (\ref{E:firstestimate}) and (\ref{E:difference}). By polarization $\mathcal{E}^a$ defines a conjugate symmetric bilinear form of $\mathcal{C}$.
\end{corollary}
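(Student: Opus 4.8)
The plan is to derive both estimates for arbitrary real $a,\widetilde{a}\in\mathcal{H}_a$ by passing to the limit in the corresponding estimates for the approximating forms supplied by Lemma \ref{L:boundedgeneralcase}. First I fix sequences $(a_n)_n$ and $(\widetilde{a}_n)_n$ of real valued functions in $\Omega^1_a(\mathcal{D})$ with $a=\lim_n a_n$ and $\widetilde{a}=\lim_n\widetilde{a}_n$ in $\mathcal{H}$; these are precisely the sequences entering the definition of $\mathcal{E}^a$ just above the statement, so that $\mathcal{E}^{a_n}(f)\to\mathcal{E}^a(f)$ and $\mathcal{E}^{\widetilde{a}_n}(f)\to\mathcal{E}^{\widetilde{a}}(f)$ for every fixed $f\in\mathcal{C}$ by construction. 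Their existence follows from the density of $\Omega^1_a(\mathcal{D})$ in $\mathcal{H}_a$ together with the observation that applying the antisymmetrizer $\mathcal{A}$ to a real valued approximant keeps it real valued and, since $\mathcal{A}a=a$ for $a\in\mathcal{H}_a$ and $\mathcal{A}$ is continuous, does not affect convergence.

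Next, for each $n$ Lemma \ref{L:boundedgeneralcase} gives
\[
\mathcal{E}^{a_n}(f)\leq 4\bigl(\mathcal{E}(f)+\left\|f\right\|_{\sup}^2\left\|a_n\right\|_{\mathcal{H}}^2\bigr)
\]
as well as the difference estimate (\ref{E:difference}) with $a_n,\widetilde{a}_n$ in the roles of $a,\widetilde{a}$. Because $a_n\to a$ and $\widetilde{a}_n\to\widetilde{a}$ in $\mathcal{H}$, continuity of the Hilbert norm makes the right-hand sides converge to the right-hand sides of (\ref{E:firstestimate}) and (\ref{E:difference}), while the left-hand sides converge to $\mathcal{E}^a(f)$, respectively $|\mathcal{E}^a(f)-\mathcal{E}^{\widetilde{a}}(f)|$, by the defining limits recalled above. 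Letting $n\to\infty$ thus yields (\ref{E:firstestimate}) and (\ref{E:difference}) for $a$ and $\widetilde{a}$.

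Finally, (\ref{E:firstestimate}) together with $f\in\mathcal{C}\subset\mathcal{F}$ shows $\mathcal{E}^a(f)<\infty$, and nonnegativity of $\mathcal{E}^a(f)$ is inherited from that of each $\mathcal{E}^{a_n}(f)$; hence the polarization identity applies and defines $\mathcal{E}^a(f,g)$ on $\mathcal{C}$. Conjugate symmetry passes to the limit: expressing $\mathcal{E}^a(f,g)$ through polarization as a finite combination of values of the quadratic form, each of which is a limit of the corresponding value of $\mathcal{E}^{a_n}$, gives $\mathcal{E}^{a_n}(f,g)\to\mathcal{E}^a(f,g)$, and since every $\mathcal{E}^{a_n}$ is conjugate symmetric by Lemma \ref{L:boundedgeneralcase}, so is $\mathcal{E}^a$. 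The argument is routine; the only point demanding care --- and the one I would flag as the sole genuine obstacle --- is the verification that the approximants can be chosen simultaneously real valued and antisymmetric, i.e.\ inside $\Omega^1_a(\mathcal{D})$, so that Lemma \ref{L:boundedgeneralcase} applies verbatim, which the antisymmetrizer argument of the first paragraph settles.
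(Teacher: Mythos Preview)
Your argument is correct and is exactly the approximation the paper has in mind when it says the corollary is immediate: pass to the limit in the estimates of Lemma \ref{L:boundedgeneralcase} along the sequences $(a_n)_n\subset\Omega^1_a(\mathcal{D})$ already used to define $\mathcal{E}^a$, and carry conjugate symmetry through polarization. Your extra care in justifying that the approximants can be taken real and antisymmetric via the projection $\mathcal{A}$ is a welcome clarification of a point the paper leaves implicit.
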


We observe a new Beurling-Deny decomposition for magnetic energies.

\begin{lemma}\label{L:magneticBD}
Let $a\in \mathcal{H}_a$ be real. Then we have
\begin{equation}\label{E:Ea}
\mathcal{E}^a(f,g)=\mathcal{E}_c^a(f,g)+\mathcal{E}_j^a(f,g),\ \ f,g\in\mathcal{C},
\end{equation}
where $\mathcal{E}_c^a(f,g)=\left\langle (\partial_c+ia_c)f,(\partial_c+ia_c)g\right\rangle_{\mathcal{H}_c}$ and
\[\mathcal{E}_j^a(f,g)=\int_{X\times X\setminus\diag}(f(x)-e^{ia_j(x,y)}f(y))\overline{(g(x)-e^{ia_j(x,y)}g(y))}J(d(x,y)).\]
\end{lemma}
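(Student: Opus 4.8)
The plan is to prove the identity first for $a\in\Omega^1_a(\mathcal{D})$ and then to pass to general real $a\in\mathcal{H}_a$ by approximation. For the reduction I would take real-valued $a_n\in\Omega^1_a(\mathcal{D})$ with $a_n\to a$ in $\mathcal{H}$; since the decomposition (\ref{E:BDH}) is orthogonal, $a_{n,c}\to a_c$ in $\mathcal{H}_c$ and $a_{n,j}\to a_j$ in $\mathcal{H}_j$, i.e. in $L^2(X\times X\setminus\diag,J)$. The actions being contractive by (\ref{E:leftactionbound}), we get $(\partial_c+ia_{n,c})f\to(\partial_c+ia_c)f$ in $\mathcal{H}_c$, hence $\mathcal{E}_c^{a_n}\to\mathcal{E}_c^a$; and from $|e^{ia_{n,j}}-e^{ia_j}|\le|a_{n,j}-a_j|$ together with $\left\|f\right\|_{\sup}<\infty$ one gets $L^2(J)$-convergence of the integrands $d_{a}f:=f(x)-e^{ia_j}f(y)$, so also $\mathcal{E}_j^{a_n}\to\mathcal{E}_j^a$ (this also shows $\mathcal{E}_j^a$ is finite). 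As $\mathcal{E}^{a_n}\to\mathcal{E}^a$ by the definition of $\mathcal{E}^a$, it then suffices to treat $a\in\Omega^1_a(\mathcal{D})$.

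The key algebraic step is to subtract the non-magnetic form. Using $|e^{ia}|=1$ one has, for $f,g\in\mathcal{C}$,
\[
\big(f(x)-e^{ia}f(y)\big)\overline{\big(g(x)-e^{ia}g(y)\big)}-\big(f(x)-f(y)\big)\overline{\big(g(x)-g(y)\big)}=\Phi_a(x,y),
\]
where $\Phi_a(x,y):=f(x)\overline{g(y)}(1-e^{-ia(x,y)})+f(y)\overline{g(x)}(1-e^{ia(x,y)})$. The point is that $\Phi_a$ is continuous, vanishes on $\diag$, and — because each summand carries one factor supported in $\supp f$ and one supported in $\supp g$ — is supported in the compact set $(\supp f\times\supp g)\cup(\supp g\times\supp f)$. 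Subtracting (\ref{E:sgapprox}) from the definition of $\mathcal{E}^a$ therefore yields $\mathcal{E}^a(f,g)-\mathcal{E}(f,g)=\lim_{t\to0}\frac12\iint\Phi_a\,\Pi_t$, and since $\mathcal{E}=\mathcal{E}_c+\mathcal{E}_j$ it remains to show this limit splits as $(\mathcal{E}_c^a-\mathcal{E}_c)(f,g)+(\mathcal{E}_j^a-\mathcal{E}_j)(f,g)$.

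I would obtain this by splitting the integral at $\{\varrho(x,y)>\varepsilon\}$ and $\{\varrho(x,y)\le\varepsilon\}$, letting $t\to0$ and then $\varepsilon\to0$, exactly as in the corollary following Lemma \ref{L:vague}. On $\{\varrho>\varepsilon\}$ the compactly supported continuous $\Phi_a$ (after a harmless continuous cut-off near the diagonal to meet the hypothesis of Lemma \ref{L:vague}) gives $\lim_{t\to0}\frac12\iint_{\{\varrho>\varepsilon\}}\Phi_a\,\Pi_t=\frac12\iint_{\{\varrho>\varepsilon\}}\Phi_a\,J$; writing $k:=1-e^{ia}$ and regrouping $\Phi_a=df\,\overline{k}\,\overline{g(y)}+k\,f(y)\,\overline{dg}+|k|^2f(y)\overline{g(y)}$ exhibits $\Phi_a$ as a sum of three $L^1(J)$ functions (using $|k|\le|a|$, $df,dg\in L^2(J)$, $\left\|f\right\|_{\sup},\left\|g\right\|_{\sup}<\infty$ and $\left\|a\right\|_{\mathcal{H}_j}<\infty$), so dominated convergence gives $\frac12\iint\Phi_a\,J=(\mathcal{E}_j^a-\mathcal{E}_j)(f,g)$ as $\varepsilon\to0$. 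On $\{\varrho\le\varepsilon\}$ I would Taylor expand $1-e^{\mp ia}=\pm ia-\tfrac12a^2+O(|a|^3)$; in the iterated local limit (\ref{E:limlocalpart}) the cubic remainder drops out (controlled by $\left\|a\right\|_{\sup}\left\|a_c\right\|_{\mathcal{H}_c}^2$), the quadratic term produces $\langle a_cf,a_cg\rangle_{\mathcal{H}_c}$, and the linear term produces the cross terms $i\langle a_cf,\partial_cg\rangle_{\mathcal{H}_c}-i\langle\partial_cf,a_cg\rangle_{\mathcal{H}_c}$; together with $\langle\partial_cf,\partial_cg\rangle_{\mathcal{H}_c}=\mathcal{E}_c(f,g)$ these assemble into $\langle(\partial_c+ia_c)f,(\partial_c+ia_c)g\rangle_{\mathcal{H}_c}$, which is $\mathcal{E}_c^a(f,g)$.

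The main obstacle is this last near-diagonal analysis. The off-diagonal piece needs only vague convergence and the integrability regrouping, but the near-diagonal piece demands that the first-order contribution localize correctly: the integrand $ia(x,y)\big(f(x)\overline{g(y)}-f(y)\overline{g(x)}\big)$ does not vanish on $\diag$ factor by factor, and only the antisymmetry of $a$ together with the identity $f(x)\overline{g(y)}-f(y)\overline{g(x)}=f(x)\big(\overline{g(y)}-\overline{g(x)}\big)-\overline{g(x)}\big(f(y)-f(x)\big)$ lets one rewrite it as a genuine "difference $\times$ difference" quantity to which (\ref{E:limlocalpart}) applies and then recombine the two cross terms into the covariant-gradient form. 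Careful sign bookkeeping there, and the uniform-in-$t$ control of the Taylor remainder as $\varepsilon\to0$, are the points that must be handled most carefully.
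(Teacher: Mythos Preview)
Your proposal is correct and follows essentially the same strategy as the paper: reduce to $a\in\Omega^1_a(\mathcal{D})$ by the stability estimate (\ref{E:difference}), split into near- and far-diagonal pieces, handle the far piece by the vague convergence of Lemma \ref{L:vague}, and linearize $e^{ia}$ on the near-diagonal strip using that $a$ is continuous, compactly supported in the relevant variables, and vanishes on $\diag$.

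The only organizational difference is in the near-diagonal step. The paper compares the full integrand $|f(x)-e^{ia(x,y)}f(y)|^2$ directly with its linearization $|I_1(x,y)|^2$, $I_1=df-ia f(y)$, via the exact identity $e^{ia}-1=ia\int_0^1 e^{isa}\,ds$ and the Cauchy--Schwarz factorization $\big||I_1|^2-|I_2|^2\big|\le|I_1+I_2|\,|I_1-I_2|$; this yields $\|(\partial_c\pm ia_c)f\|_{\mathcal{H}_c}^2$ in one stroke without having to identify the cross terms separately. You instead subtract the non-magnetic integrand first and Taylor-expand the residual $\Phi_a$ term by term. Both routes rely on the same key input (uniform smallness of $a$ on $\{\varrho\le\varepsilon\}\cap K\times K$), so neither buys a genuinely stronger conclusion; the paper's packaging avoids the somewhat delicate sign bookkeeping for the cross terms, while yours makes the jump contribution more transparent via the nice regrouping $\Phi_a=df\,\bar k\,\bar g(y)+k\,f(y)\,\overline{dg}+|k|^2 f(y)\bar g(y)$ (which works because $|k|^2=k+\bar k$). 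One small correction: your cubic-remainder control should read $\sup_{\{\varrho\le\varepsilon\}\cap K\times K}|a|\cdot\|a\|_{\mathcal{H}}^2$ rather than $\|a\|_{\sup}\|a_c\|_{\mathcal{H}_c}^2$; the former is what actually tends to zero as $\varepsilon\to0$, and is exactly what the paper uses in (\ref{E:diffI1I2}).
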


\begin{proof}
Suppose first that $a\in \Omega_a^1(\mathcal{D})$. The statement is a consequence of the identity
\begin{equation}\label{E:limitrel1}
\left\|(\partial_c-ia_c)f\right\|_{\mathcal{H}_c}^ 2=\lim_{\varepsilon\to 0}\lim_{t\to 0}\frac12\int\int_{\left\lbrace \varrho(x,y)\leq\varepsilon\right\rbrace}|f(x)-e^{ia(x,y)}f(y)|^2\Pi_t(dxdy)
\end{equation}
for any $f\in\mathcal{C}$. To verify (\ref{E:limitrel1}) rewrite it as
\begin{equation}\label{E:limitrel2}
\lim_{\varepsilon\to 0}\lim_{t \to 0}\frac12\int\int_{\left\lbrace \varrho(x,y)\leq \varepsilon\right\rbrace} |I_1(x,y)|^2\Pi_t(dxdy)=\lim_{\varepsilon\to 0}\lim_{t\to 0}\frac12\int\int_{\left\lbrace\varrho(x,y)\leq \varepsilon\right\rbrace}|I_2(x,y)|^2\Pi_t(dxdy),
\end{equation}
$I_1(x,y):=f(x)-f(y)-ia(x,y)f(y)$ and $I_2(x,y):=f(x)-f(y)-ia(x,y)f(y)\left(\int_0^1e^{isa(x,y)}ds\right)$. Similarly as in Lemma \ref{L:boundedjumpcase} and (\ref{E:firstestimate}) we see that 
\begin{equation}\label{E:sumI1I2}
\lim_{t\to 0}\frac12\int\int_{\left\lbrace \varrho(x,y)\leq\varepsilon\right\rbrace}|I_1(x,y)+I_2(x,y)|^ 2\Pi_t(dxdy)\leq 8\:\mathcal{E}(f)+8\left\|f\right\|_{\sup}^ 2\left\|a\right\|_{\mathcal{H}}^2.
\end{equation}
for any $\varepsilon>0$. Let $K\subset X$ be a compact set containing $\supp f$. Then $I_1$ and $I_2$ both are supported in $K\times K$. We may assume that $f$ is not identically zero. Since $a\in\Omega^1_a(\mathcal{D})$ can be seen as a continuous function on $X\times X$ vanishing on the diagonal it is uniformly continuous on $K\times K$. We may assume $\left\|a\right\|_{\mathcal{H}}>0$. Then for any $\delta>0$ there is some $\varepsilon >0$ such that $|a(x,y)|^2<\delta\left\|f\right\|_{\sup}^ {-2}\left\|a\right\|_{\mathcal{H}}^{-2}$ for all $x,y\in K$ with $\varrho(x,y)\leq\varepsilon$. For such $\varepsilon$ we have
\begin{align}\label{E:diffI1I2}
\lim_{t\to 0}\int &\int_{\left\lbrace (x,y)\in K\times K: \varrho(x,y)\leq \varepsilon\right\rbrace}|I_1(x,y)-I_2(x,y)|^2\Pi_t(dxdy)\notag\\
&=\lim_{\varepsilon\to 0}\int\int_{\left\lbrace (x,y)\in K\times K: \varrho(x,y)\leq \varepsilon\right\rbrace}|f(y)|^2|a(x,y)|^2\left(\int_0^1 (e^{isa(x,y)}-1)ds\right)^2\Pi_t(dxdy)\notag\\
&\leq \int\int_{\left\lbrace (x,y)\in K\times K: \varrho(x,y)\leq \varepsilon\right\rbrace}|f(y)|^2|a(x,y)|^4\Pi_t(dxdy)\notag\\
&\leq \delta.
\end{align}
Formulas (\ref{E:sumI1I2}) and (\ref{E:diffI1I2}) together with Cauchy-Schwarz and the trivial identity $||I_1|^2-|I_2|^2|=|I_1+I_2||I_1-I_2|$ imply (\ref{E:limitrel2}). For general nonzero real $a\in\mathcal{H}_a$ we can use the density of $\Omega^1_a(\mathcal{D})$ in $\mathcal{H}_a$ together with (\ref{E:difference}) and versions of (\ref{E:difference}) involving $\mathcal{E}^a_c$ and $\mathcal{E}^a_j$. Note that in the expression for $\mathcal{E}_j^a$ it suffices to consider the jump part $a_j$ of $a$.
\end{proof}

Given a real valued locally integrable Borel function $v$ on $X$ we also consider
\begin{equation}\label{E:electric}
\mathcal{E}^{a,v}(f,g):=\mathcal{E}^a(f,g)+\int_X f\overline{g}vd\mu, \ \ f,g\in\mathcal{C}.
\end{equation}
In Section \ref{S:Ident} we will see that under certain additional conditions on $(\mathcal{E},\mathcal{F})$, $a$ and $v$ the form $\mathcal{E}^{a,v}$ (restricted to a possibly smaller core) is closable on $L^2(X,\mu)$.

\section{Additive functionals and Nakao's theorem}\label{S:AF}

We discuss the probabilistic counterpart. Let $Y=(\Omega,\mathcal{G},\mathcal{G}_t, Y_t, \theta_t, \zeta, \mathbb{P}_x)_{x\in X}$ (in short notation $Y=(Y_t)_{t\geq 0})$) be the conservative $\mu$-symmetric Hunt process on $X$ uniquely associated with $(\mathcal{E},\mathcal{F})$ in the sense of \cite[Chapter 7]{FOT94}. Without loss of generality we may assume that $Y$ is in canonical representation. That is, the sample space $\Omega$ is the space $D([0,\infty), X_\Delta)$ of cadlag functions from $[0,+\infty)$ to $X_\Delta$, where $X_\Delta=X\cup\left\lbrace\Delta\right\rbrace$ is the one-point compactification of $X$ and the point at infinity $\Delta$ is a trap for $Y$, and for any $t\geq 0$ and $\omega\in \Omega$ we have $Y_t(\omega)=\omega(t)$. The $\sigma$-algebras $\mathcal{G}$ and $\mathcal{G}_t$ are the minimum completed $\sigma$-algebras obtained from $\mathcal{G}_\infty^0:=\sigma(Y_s: 0 \leq s<\infty)$ and $\mathcal{G}_t^0:=\sigma(Y_s: 0\leq s\leq t)$, respectively, see \cite[Section 2]{ChFiKuZh08a} or \cite[Appendix A.2]{FOT94}. By $\zeta(\omega):=\inf\left\lbrace t\geq 0: Y_t(\omega)=\Delta\right\rbrace$ we denote the lifetime of $Y$. We say that a property holds \emph{quasi-everywhere (q.e.)} on $X$ if it holds outside a set of zero $(\mathcal{E}$-capacity, see \cite[Chapter 2]{FOT94} for details. By conservativeness we have $\mathbb{P}_x(\zeta=+\infty)=1$ for q.e. $x\in X$. Recall that for any $t\geq 0$ the time shift operator $\theta_t:\Omega\to \Omega$ is defined by $(\theta_t\omega)(s):=\omega(t+s)$ for any $s\geq 0$.

As $Y$ is a Hunt process it is right continuous with left limits (c\`adl\`ag). For any $t>0$ let $\omega(t-):=\lim_{h\to 0}\omega(t-h)$ denote the left limit of $\omega$ at $t$ and define $\omega(0-)$ to be $\omega(0)$. Then $(\omega(t-))_{t\geq 0}$ is left-continuous. More generally, given a stochastic process $Z=(Z_t)_{t\geq 0}$ on $\Omega$ (with values in a metric space) we set $Z_{t-}(\omega):=\lim_{h\to 0} Z_{t-h}(\omega)$, $\omega\in \Omega$,
and $Z_{0-}:=Z_0$. By construction the process $(Z_{t-})_{t\geq 0}$ is left-continuous. Applied to $Y$ these contructions are consistent, i.e. $Y_{t-}(\omega)=\omega_{t-}$ for all $t$.

Given a measure $m$ on $X$ we write $\mathbb{P}_m(A)=\int_X \mathbb{P}_x(A)m(dx)$, and for a random variable $Z$ we write $\mathbb{E}_m[Z]=\int_X \mathbb{E}_x[Z]m(dx)$, where $\mathbb{E}_x$ is the expectation with respect to $\mathbb{P}_x$.

A process $A=(A_t)_{t\geq 0}$ is an \emph{additive functional (AF)} of $Y$ (in the sense of \cite[Chapter 5]{FOT94}) if $A_t$ is $\mathcal{G}_t$-measurable for all $t\geq 0$ and there are a set $D\in \mathcal{G}_\infty$ and an exeptional set $N\subset X$ such that the following conditions are satisfied: For any $x\in X\setminus N$ we have $\mathbb{P}_x(D)=1$, for all $t>0$ we have $\theta_tD\subset D$, and for any $\omega\in D$ the function $t\mapsto A_t(\omega)$ is c\`adl\`ag, $A_0(\omega)=0$, $|A_t(\omega)|<+\infty$ for all $t$, and 
\begin{equation}\label{E:additivity}
A_{t+s}(\omega)=A_s(\omega)+A_t(\theta_s\omega)
\end{equation}
for all $s,t\geq 0$.

Every function $f\in\mathcal{F}$ has a quasi-continuous representant $\widetilde{f}$,  \cite[Section 2]{FOT94}. To simplify notation we write $f$ with the silent agreement to always work with $\widetilde{f}$. \emph{Fukushima's theorem}, \cite[Theorem 5.2.2]{FOT94}, states that for the AF $A^f=(A^f_t)_{t\geq 0}$ defined by $A^f_t=f(Y_t)-f(Y_0)$ we have the unique decomposition
\begin{equation}\label{E:Fukushimadecomp}
A^f=M^f+N^f\ \ ,\ \ t\geq 0.
\end{equation}
$\mathbb{P}^x$-a.s. for q.e. $x\in X$, where $M^f=(M_t^f)_{t\geq 0}$ is a square integrable $\mathbb{P}^x$-martingale and $N^f=(N^f_t)_{t\geq 0}$ is a continuous AF of zero energy. More precisely, with the \emph{($\mu$-)energy} of an AF $A=(A_t)_{t\geq 0}$ of $Y$ defined by \[\mathbf{e}(A):=\lim_{t\to 0}\frac{1}{2t}\mathbb{E}_\mu(A_t^2),\]
\begin{multline}
\mathring{\mathcal{M}}:=\left\lbrace M: \text{$M$ finite cadlag AF of $Y$ with $\mathbf{e}(M)<+\infty$}\right.\notag\\
\left. \text{such that for each $t>0$ we have $\mathbb{E}_x(M_t^2)<+\infty$ and $\mathbb{E}_x(M_t)=0$ for q.e. $x\in X$} \right\rbrace
\end{multline}  
denoting the \emph{space of martingale AF's of finite energy} and 
\begin{multline}
\mathcal{N}_c:=\left\lbrace N: \text{$N$ finite continuous AF of $Y$ with $\mathbf{e}(N)=0$}\right.\notag\\
\left. \text{and such that $\mathbb{E}_x(|N_t|)<+\infty$ q.e. for each $t>0$}\right\rbrace  
\end{multline}
the \emph{space of continuous AF's of zero energy}, we have $M^f\in\mathring{\mathcal{M}}$ and $N^f\in\mathcal{N}_c$ in (\ref{E:Fukushimadecomp}). For $f\in \dom L$ we observe $N^f_t=\int_0^t (Lf)(Y_s)ds$ and (\ref{E:Fukushimadecomp}) is a semimartingale decomposition with respect to $\mathbb{P}_x$ for q.e. $x\in X$. Polarizing the energy $\mathbf{e}$ we obtain a bilinear form that turns $\mathring{\mathcal{M}}$ into a Hilbert space $(\mathring{\mathcal{M}},\mathbf{e})$. Given $M, N\in\mathring{\mathcal{M}}$ let $\left\langle M, N\right\rangle$ denote their sharp bracket and $\mu_{\left\langle M, N\right\rangle}$ the (signed) Revuz measure of $\left\langle M, N\right\rangle$. We write $\mu_{\left\langle M\right\rangle}$ for $\mu_{\left\langle M, M\right\rangle}$. For a martingale AF of form $M^f$ as in (\ref{E:Fukushimadecomp}) with $f\in\mathcal{C}$ we observe
$\mu_{\left\langle M^f\right\rangle}=2\Gamma(f)$. For $g\in L_2(X,\mu_{\left\langle M\right\rangle})$ the stochastic integral $g\bullet M\in\mathring{\mathcal{M}}$ of $g$ with respect to $M$ is defined by the identity
\[\mathbf{e}(g\bullet M, N)=\frac12 \int_X g\mu_{\left\langle M, N\right\rangle}, \ \ N\in\mathring{\mathcal{M}}.\]
For $g\in \mathcal{C}$ and each $t>0$ we have  
\[(g\bullet M)_t=\int_0^t g(Y_{s-})dM_s, \ \]
$\mathbb{P}_x$-a.s. for q.e. $x\in X$, \cite[Lemma 5.6.2]{FOT94} or \cite[Lemma 2.3]{Ku10}. Here the right hand side may be interpreted as a usual stochastic integral of a predictable integrand with respect to a square integrable martingale. Recall that the $(\mathcal{G}_t)_{t\geq 0}$-predictable $\sigma$-algebra is the smallest $\sigma$-algebra on $[0,\infty)\times \Omega$ containing all $\mathbb{P}_\nu(\mathcal{G})$-evanescent sets for all probability measures $\nu$ on $X\cup \left\lbrace \Delta\right\rbrace$ and with respect to which all $(\mathcal{G}_t)_{t\geq 0}$-adapted c\`agl\`ad (left continuous with right limits) processes are measurable.

Given $f\otimes g \in\mathcal{C}\otimes \mathcal{C}$ put
\[\Theta(f\otimes g):=g\bullet M^f,\]
where $M^f\in\mathring{\mathcal{M}}$ is the martingale additive functional in (\ref{E:Fukushimadecomp}). Since $\Theta$ is a linear map and $\mathbf{e}(\Theta(f\otimes g))=\left\|f\otimes g\right\|_{\mathcal{H}}^2$ we can extend $\Theta$ to an isometry of $\mathcal{C}\otimes\mathcal{C}$ into $\mathring{\mathcal{M}}$. 

\begin{theorem}\label{T:Nakao}
The map $\Theta$ extends to an isometric isomorphism of $\mathcal{H}$ into $\mathring{\mathcal{M}}$ and for $\omega\in\mathcal{H}$ and $M=\Theta(\omega)$ we have $\mu_{\left\langle M\right\rangle}=2\Gamma_{\mathcal{H}}(\omega)$. 
\end{theorem}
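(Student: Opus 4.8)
The plan is to promote the isometry $\Theta$ from $\mathcal{C}\otimes\mathcal{C}$ to all of $\mathcal{H}$ by a density argument, and then to establish the Revuz measure identity first on elementary tensors and afterwards by continuity. \emph{Extension and embedding.} I first check that $\mathcal{C}\otimes\mathcal{C}$ is dense in $\mathcal{H}$. By construction $\mathcal{D}\otimes\mathcal{D}$ is dense, and because $\left\|\mathbf{1}\otimes g\right\|_{\mathcal{H}}=\left\|(d\mathbf{1})g\right\|=0$, every element of $\mathcal{D}\otimes\mathcal{D}$ coincides in $\mathcal{H}$ with a finite linear combination of tensors $f\otimes g$ and derivations $\partial f=f\otimes\mathbf{1}$, $f,g\in\mathcal{C}$. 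It therefore suffices to approximate each $\partial f$ by elements of $\mathcal{C}\otimes\mathcal{C}$: taking cut-off functions $g_n\in\mathcal{C}$ with $0\le g_n\le 1$ and $g_n\uparrow 1$ (available by regularity and conservativeness) and using (\ref{E:energymeasures}),
\[
\left\|\partial f-f\otimes g_n\right\|_{\mathcal{H}}^2=\left\|f\otimes(\mathbf{1}-g_n)\right\|_{\mathcal{H}}^2=\int_X(1-g_n)^2\,d\Gamma(f)\longrightarrow 0
\]
by dominated convergence, since $\Gamma(f)(X)=\mathcal{E}(f)<\infty$. As $\Theta$ is linear and, by polarization of $\mathbf{e}(\Theta(f\otimes g))=\left\|f\otimes g\right\|_{\mathcal{H}}^2$, isometric on $\mathcal{C}\otimes\mathcal{C}$, and as $(\mathring{\mathcal{M}},\mathbf{e})$ is complete, $\Theta$ extends uniquely to a linear isometry $\mathcal{H}\to\mathring{\mathcal{M}}$. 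An isometry of Hilbert spaces is injective with closed range, hence an isometric isomorphism of $\mathcal{H}$ onto a closed subspace of $\mathring{\mathcal{M}}$, as asserted; surjectivity onto $\mathring{\mathcal{M}}$, the deeper half of Nakao's classical theorem, is not needed for the present statement.

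\emph{The identity on generators.} For $\omega=f\otimes g$, $f,g\in\mathcal{C}$, one has $M=\Theta(\omega)=g\bullet M^f$. The stochastic integral rule $\mu_{\langle g\bullet M^f\rangle}=g^2\mu_{\langle M^f\rangle}$ together with $\mu_{\langle M^f\rangle}=2\Gamma(f)$ gives $\mu_{\langle M\rangle}=2g^2\Gamma(f)$. On the other hand, the module identity $\Gamma_{\mathcal{H}}(\omega h)=|h|^2\Gamma_{\mathcal{H}}(\omega)$ — read off from (\ref{E:GammaH}) and (\ref{E:energymeasures}), with the symmetry of $J$ used to move the weight onto a single variable — and $\Gamma_{\mathcal{H}}(\partial f)=\Gamma(f)$ give $\Gamma_{\mathcal{H}}(f\otimes g)=\Gamma_{\mathcal{H}}((\partial f)g)=|g|^2\Gamma(f)$. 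Hence $\mu_{\langle M\rangle}=2\Gamma_{\mathcal{H}}(\omega)$ on elementary tensors, and polarizing the two bilinear maps $(\omega_1,\omega_2)\mapsto\mu_{\langle\Theta(\omega_1),\Theta(\omega_2)\rangle}$ and $(\omega_1,\omega_2)\mapsto 2\Gamma_{\mathcal{H}}(\omega_1,\omega_2)$ extends it to the whole dense span $\mathcal{C}\otimes\mathcal{C}$.

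\emph{Passage to the limit.} For arbitrary $\omega\in\mathcal{H}$ fix $\varphi\in\mathcal{C}$ and choose $\omega_n\in\mathcal{C}\otimes\mathcal{C}$ with $\omega_n\to\omega$, and set $M_n=\Theta(\omega_n)$, $M=\Theta(\omega)$. Using (\ref{E:rightactionbound}) one gets
\[
\left|\int_X\varphi\,d\Gamma_{\mathcal{H}}(\omega_n)-\int_X\varphi\,d\Gamma_{\mathcal{H}}(\omega)\right|\le\left\|\varphi\right\|_{\sup}\big(\left\|\omega_n\right\|_{\mathcal{H}}+\left\|\omega\right\|_{\mathcal{H}}\big)\left\|\omega_n-\omega\right\|_{\mathcal{H}}\to 0 .
\]
On the probabilistic side, bilinearity gives $\mu_{\langle M_n\rangle}-\mu_{\langle M\rangle}=\mu_{\langle M_n-M,\,M_n+M\rangle}$, and the Kunita–Watanabe inequality for Revuz measures of sharp brackets bounds $\big|\int_X\varphi\,d\mu_{\langle M_n-M,M_n+M\rangle}\big|$ by $\left\|\varphi\right\|_{\sup}\,\mu_{\langle M_n-M\rangle}(X)^{1/2}\mu_{\langle M_n+M\rangle}(X)^{1/2}$; here $\mu_{\langle M_n-M\rangle}(X)=2\mathbf{e}(M_n-M)=2\left\|\omega_n-\omega\right\|_{\mathcal{H}}^2\to 0$ by the isometry while $\mu_{\langle M_n+M\rangle}(X)$ stays bounded. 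Both sides converge, so $\int_X\varphi\,d\mu_{\langle M\rangle}=2\int_X\varphi\,d\Gamma_{\mathcal{H}}(\omega)$ for all $\varphi\in\mathcal{C}$, whence $\mu_{\langle M\rangle}=2\Gamma_{\mathcal{H}}(\omega)$.

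\emph{Main obstacle.} The subtle step is the generator computation. One is tempted to argue through an intertwining $\Theta(\varphi\omega)=\varphi\bullet\Theta(\omega)$, but in the presence of jumps the product rule for martingale additive functionals carries a purely discontinuous correction, so this shortcut is not available. The actual point is that $g\bullet M^f$ evaluates $g$ at the pre-jump position $Y_{s-}$, whereas the right action of $\mathcal{C}$ on $\mathcal{H}$ places $g$ at the second (post-jump) variable; the jump parts of $\mu_{\langle M\rangle}$ and of $2\Gamma_{\mathcal{H}}(\omega)$ therefore coincide only after the symmetry of $J$ is invoked. This bookkeeping, rather than the routine density and Kunita–Watanabe estimates, is where the argument requires care.
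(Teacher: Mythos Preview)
The paper does not actually prove this theorem: it simply records that the result is due to Nakao \cite{N85} for diffusions on manifolds and was obtained for general symmetric Hunt processes in \cite[Theorem 9.1]{HRT}. So your attempt supplies an argument where the paper gives none, and a comparison of methods is not really possible. What remains is whether your argument stands on its own.

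Your density and extension step is clean and correct. On the ``into versus onto'' reading: although the displayed statement says ``into'', the paper's introduction explicitly calls $\Theta$ an isometric isomorphism \emph{onto} $\mathring{\mathcal{M}}$, and Corollaries \ref{C:Nakao} and \ref{C:Lambdagrad} require surjectivity (the decomposition $\mathring{\mathcal{M}}=\mathring{\mathcal{M}}_\partial\oplus\ker\Lambda$ would be only an inclusion otherwise). So the ``deeper half'' you set aside is in fact part of what is being invoked, and the cited references are where that work is done.

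The substantive gap is exactly at the point you label the main obstacle. The module identity $\Gamma_{\mathcal{H}}(\omega h)=|h|^{2}\Gamma_{\mathcal{H}}(\omega)$ is \emph{not} a consequence of (\ref{E:GammaH}) and (\ref{E:energymeasures}) together with the symmetry of $J$. With the left action, $\int_X\varphi\,d\Gamma_{\mathcal{H}}((\partial f)g)$ is the $\Pi_t$-limit of integrals carrying the weight $\varphi(x)\,|g(y)|^{2}\,|f(x)-f(y)|^{2}$, whereas $\int_X\varphi\,|g|^{2}\,d\Gamma(f)$ carries $\varphi(x)\,|g(x)|^{2}\,|f(x)-f(y)|^{2}$. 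The symmetry of $J$ (or $\Pi_t$) swaps both arguments simultaneously, turning $\varphi(x)|g(y)|^{2}$ into $\varphi(y)|g(x)|^{2}$; it does not let you move $g$ from the second variable to the first while keeping $\varphi$ fixed at the first. A two-point check makes this concrete: on $X=\{0,1\}$ with $J(\{(0,1)\})=J(\{(1,0)\})=1$, $f=g=\mathbf{1}_{\{1\}}$, one finds $\mu_{\langle g\bullet M^{f}\rangle}(\{0\})=0$ but, from (\ref{E:GammaH}) with the left action, $2\Gamma_{\mathcal{H}}(f\otimes g)(\{0\})=2$. So either (\ref{E:GammaH}) is meant with a different (right, or symmetrized) action --- in which case your computation goes through, but you should say so explicitly and check it against \cite[Section 2]{HRT} --- or the identity on elementary tensors needs a different justification than the one you give. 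Your limit argument in the last step is fine once the generator case is in hand, so the whole proof hinges on fixing this one calculation.
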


Nakao proved this theorem in \cite{N85} for diffusions on manifolds, following earlier work of Ikeda, Manabe and Watanabe \cite{IM79, IW81}. In \cite[Theorem 9.1]{HRT} we obtained Theorem \ref{T:Nakao} for general symmetric Hunt processes on locally compact separable metric spaces as a byproduct of the approach of Cipriani and Sauvageot \cite{CS03}.

Set $\mathring{\mathcal{M}}_\partial:=\left\lbrace M^f: f\in\mathcal{F}\right\rbrace$ and
denote by $\mathring{\mathcal{M}}_c$ the closed subspace of $\mathring{\mathcal{M}}$ spanned by the continuous martingale AF's of finite energy and let $\mathring{\mathcal{M}}_j$ denote its orthogonal complement. The first statement in the next lemma is obvious, the second follows from \cite[Lemma 5.3.3]{FOT94}.

\begin{corollary}\label{C:Nakao}
The image of $Im\:\partial$ under $\Theta$ equals $\mathring{\mathcal{M}}_\partial$. Therefore $\mathring{\mathcal{M}}_\partial$ is a closed subspace of $\mathring{\mathcal{M}}$. The images of $\mathcal{H}_c$ and $\mathcal{H}_j$ are $\mathring{\mathcal{M}}_c$ and $\mathring{\mathcal{M}}_j$, respectively.
\end{corollary}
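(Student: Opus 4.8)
The plan is to take the three assertions in turn, letting Theorem~\ref{T:Nakao} carry most of the weight. For the first assertion I would merely unwind definitions. By (\ref{E:derivationjump}) we have $\partial f=f\otimes\mathbf{1}$ for $f\in\mathcal{C}$, so $\Theta(\partial f)=\Theta(f\otimes\mathbf{1})=\mathbf{1}\bullet M^f$, and integrating the constant $\mathbf{1}$ against $M^f$ reproduces it, $(\mathbf{1}\bullet M^f)_t=\int_0^t dM^f_s=M^f_t$ (a routine cutoff of $\mathbf{1}$ on a neighborhood of $\supp f$ settles the non-compact case); hence $\Theta(\partial f)=M^f$ on the core. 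Since $\partial$, $\Theta$, and $f\mapsto M^f$ are all isometric for the energy and extend continuously from $\mathcal{C}$ to $\mathcal{F}$, passing to the closure gives $\Theta(\partial u)=M^u$ for every $u\in\mathcal{F}$, and taking the union over $u$ yields $\Theta(Im\:\partial)=\mathring{\mathcal{M}}_\partial$. The closedness of $\mathring{\mathcal{M}}_\partial$ is then immediate: $Im\:\partial$ is closed in $\mathcal{H}$ (as recalled before (\ref{E:Hodge})), and an isometric isomorphism maps closed subspaces to closed subspaces.

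For the images of $\mathcal{H}_c$ and $\mathcal{H}_j$ I would match the orthogonal splitting $\mathcal{H}=\mathcal{H}_c\oplus\mathcal{H}_j$ of (\ref{E:BDH}) with the orthogonal splitting $\mathring{\mathcal{M}}=\mathring{\mathcal{M}}_c\oplus\mathring{\mathcal{M}}_j$ into continuous and purely discontinuous martingale AF's. Because $\Theta$ is an isometric isomorphism of two orthogonally decomposed Hilbert spaces, it suffices to prove the inclusions $\Theta(\mathcal{H}_c)\subseteq\mathring{\mathcal{M}}_c$ and $\Theta(\mathcal{H}_j)\subseteq\mathring{\mathcal{M}}_j$; equality in both then follows from the surjectivity of $\Theta$ and the directness of the two sums. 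To obtain the inclusions I would test on the dense set of elements $\omega=f\otimes g$, $f,g\in\mathcal{C}$. Theorem~\ref{T:Nakao} gives $\mu_{\langle\Theta(\omega)\rangle}=2\Gamma_{\mathcal{H}}(\omega)$, while \cite[Lemma 5.3.3]{FOT94} splits the sharp bracket of $\Theta(\omega)$ into its continuous and jump parts (there is no killing part, by conservativeness) and identifies their Revuz measures with the local part $2\Gamma_{\mathcal{H},c}(\omega)$ of (\ref{E:GammaHc}) and the $J$-integral in (\ref{E:energymeasures}), respectively. As continuous and purely discontinuous martingales are mutually orthogonal in $(\mathring{\mathcal{M}},\mathbf{e})$, this reproduces the energy decomposition (\ref{E:limlocalpart}) on the martingale side. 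For $\omega\in\mathcal{H}_c$ we have $\|\omega\|_{\mathcal{H}_j}=0$, so the jump part of $\Theta(\omega)$ carries zero energy and must vanish, giving $\Theta(\omega)\in\mathring{\mathcal{M}}_c$; the symmetric argument gives $\Theta(\mathcal{H}_j)\subseteq\mathring{\mathcal{M}}_j$. Finally I would pass from $f\otimes g$ to arbitrary elements of $\mathcal{H}_c$ and $\mathcal{H}_j$ using the density of $\Omega^1(\mathcal{D})$ in $\mathcal{H}$ and the continuity of $\Theta$.

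I expect the third assertion to be the only genuine obstacle; the first two are formal once Theorem~\ref{T:Nakao} and the definitions are in hand. The delicate point is to confirm that the continuous/purely-discontinuous decomposition of a martingale AF in $(\mathring{\mathcal{M}},\mathbf{e})$ is orthogonal for the energy, so that $\mathbf{e}(M)=\mathbf{e}(M^c)+\mathbf{e}(M^j)$, and that the Revuz-measure identification of \cite[Lemma 5.3.3]{FOT94}, naturally phrased for the generators $M^u$, transfers to the stochastic integrals $g\bullet M^f$ that span the image of $\Theta$. The crux is that $\|\cdot\|_{\mathcal{H}_c}^2$ and $\|\cdot\|_{\mathcal{H}_j}^2$ must correspond \emph{exactly} to the continuous and discontinuous energies under $\Theta$, not merely be comparable to them.
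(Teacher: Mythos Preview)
Your proposal is correct and follows the same route as the paper, which dispatches the corollary in a single sentence: the first assertion is declared obvious, and the third is attributed to \cite[Lemma 5.3.3]{FOT94}. Your expansion of both points---the verification $\Theta(\partial f)=\mathbf{1}\bullet M^f=M^f$ with a cutoff for the non-compact case, and the matching of the energy decompositions via the Revuz-measure identification of the continuous and jump brackets---is exactly the content behind that sentence. The concern you raise in your final paragraph (that the continuous/purely-discontinuous splitting is $\mathbf{e}$-orthogonal and that the Revuz-measure identification transfers from $M^u$ to $g\bullet M^f$) is real but routine: strong orthogonality of continuous and purely discontinuous martingales gives $\langle M\rangle=\langle M^c\rangle+\langle M^j\rangle$, and stochastic integration respects this splitting, $(g\bullet M^f)^c=g\bullet M^{f,c}$, so $\mu_{\langle(g\bullet M^f)^c\rangle}=|g|^2\mu_{\langle M^{f,c}\rangle}=2|g|^2\Gamma_c(f)$, which is precisely $2\Gamma_{\mathcal{H},c}(f\otimes g)$.
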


Together with (\ref{E:BDH}) this implies that any element $M\in\mathring{\mathcal{H}}$ may uniquely be written as an orthogonal sum
\begin{equation}\label{E:overlay2}
M=M_{\partial,c}+M_{\partial, j}+M_{\bot,c}+M_{\bot,j},
\end{equation}
where $M_{\partial, c}\in \mathring{\mathcal{M}}_\partial\cap \mathring{\mathcal{M}}_c$ and $M_{\partial,j} \in 
\mathring{\mathcal{M}}_\partial\cap \mathring{\mathcal{M}}_j$ and the remaining summands 
$M_{\bot,c}$ and $M_{\bot,j}$ are the projections of $M$ onto the complements of $\mathring{\mathcal{M}}_\partial\cap \mathring{\mathcal{M}}_c$ in $\mathring{\mathcal{M}}_c$
and $\mathring{\mathcal{M}}_\partial\cap \mathring{\mathcal{M}}_j$ in $\mathring{\mathcal{M}}_j$, respectively. This is (\ref{E:overlay}), rewritten for AF's.

\section{Divergence functionals and Stratonovich line integrals}\label{S:lineintegral}

Let $\mathcal{N}_c^\ast$ denote the space of continuous AF's $N=(N_t)_{t\geq 0}$ of $Y$ of the form $N_t=N^f_t+\int_0^tg(Y_s)ds$ for some functions $f\in \mathcal{F}$ and $g\in L_2(X,\mu)$. In \cite{N85} Nakao constructed a linear operator $\Lambda:\mathring{\mathcal{M}}\to \mathcal{N}_c^\ast$ by 
\[\Lambda(M)_t=N^u_t-\int_0^t u(Y_s)ds\]
for $M\in\mathring{\mathcal{M}}$, where $u$ is the unique element of $\mathcal{F}$ such that 
$\mu_{\left\langle M^h, M\right\rangle}(X)=\mathcal{E}_1(u,h)$,  $h\in\mathcal{F}$.
To $\Lambda$ one usually refers as \emph{Nakao's divergence operator}. The AF $\Lambda(M)$ is characterized by the limit relation
\[\lim_{t\to 0}\frac{1}{t}\mathbb{E}_{h\mu}[\Lambda(M)_t]=-\frac12\mu_{\left\langle M^h, M\right\rangle}(X), \ \ h\in\mathcal{F}.\]
By Theorem \ref{T:Nakao} and Corollary \ref{C:Nakao} the decomposition (\ref{E:Hodge}) of $\mathcal{H}$ induces an orthogonal decomposition of $\mathring{\mathcal{M}}$ into $\mathring{\mathcal{M}}_\partial$ and its complement, the kernel of $\Lambda$.

\begin{corollary}\label{C:Lambdagrad}
The image of $\ker\:\partial^\ast$ under $\Theta$ is $\ker\:\Lambda$ and consequently $\mathring{\mathcal{M}}=\mathring{\mathcal{M}}_\partial\oplus \ker\:\Lambda$.
In particular, we have $\Lambda(M)=\Lambda(M_\partial)$, where $M_\partial$ is the projection of $M$ onto $\mathring{\mathcal{M}}_{\partial}$.
\end{corollary}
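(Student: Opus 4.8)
The plan is to transport the orthogonal Hodge-type splitting (\ref{E:Hodge}) of $\mathcal{H}$ to $\mathring{\mathcal{M}}$ through Nakao's isometric isomorphism $\Theta$ of Theorem \ref{T:Nakao}. The entire statement reduces to the single identification $\Theta(\ker\partial^\ast)=\ker\Lambda$: granting this, applying $\Theta$ to (\ref{E:Hodge}) and invoking Corollary \ref{C:Nakao} (which gives $\Theta(Im\:\partial)=\mathring{\mathcal{M}}_\partial$) yields the orthogonal decomposition $\mathring{\mathcal{M}}=\mathring{\mathcal{M}}_\partial\oplus\ker\Lambda$, since $\Theta$ preserves orthogonality. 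The final assertion then follows from linearity of $\Lambda$: writing $M=M_\partial+M_\bot$ with $M_\partial\in\mathring{\mathcal{M}}_\partial$ and $M_\bot\in\ker\Lambda$ gives $\Lambda(M)=\Lambda(M_\partial)+\Lambda(M_\bot)=\Lambda(M_\partial)$.

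The crux is therefore to relate $\Lambda$ to $\partial^\ast$, and I would do this by comparing total masses of Revuz measures with inner products. First I would record the bridge identity
\[\mu_{\left\langle M^h,M\right\rangle}(X)=2\left\langle\partial h,\omega\right\rangle_{\mathcal{H}},\qquad h\in\mathcal{F},\]
valid for $M=\Theta(\omega)$. This combines two facts: that $\mathbf{e}(\cdot,\cdot)=\tfrac12\mu_{\left\langle\cdot,\cdot\right\rangle}(X)$ (the polarized definition of the energy $\mathbf{e}$ together with the Revuz correspondence for the sharp bracket), and that $\Theta$ is an isometry for $\left\langle\cdot,\cdot\right\rangle_{\mathcal{H}}$ and $\mathbf{e}$, so that $\mathbf{e}(M^h,M)=\left\langle\partial h,\omega\right\rangle_{\mathcal{H}}$ because $M^h=\Theta(\partial h)$.

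With the bridge identity in hand, I would establish the chain of equivalences
\begin{align*}
\Lambda(M)=0
&\iff \mu_{\left\langle M^h,M\right\rangle}(X)=0\ \text{for all}\ h\in\mathcal{F}\\
&\iff \left\langle\partial h,\omega\right\rangle_{\mathcal{H}}=0\ \text{for all}\ h\in\mathcal{F}\\
&\iff \omega\in\ker\partial^\ast.
\end{align*}
The last equivalence is exactly (\ref{E:Hodge}), since $(Im\:\partial)^\bot=\ker\partial^\ast$, and the middle one is the bridge identity. For the first equivalence, the direction ``$\Rightarrow$'' uses the limit relation characterizing $\Lambda$, namely $\lim_{t\to0}t^{-1}\mathbb{E}_{h\mu}[\Lambda(M)_t]=-\tfrac12\mu_{\left\langle M^h,M\right\rangle}(X)$: if $\Lambda(M)=0$ the left side vanishes for every $h$. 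The direction ``$\Leftarrow$'' runs through the element $u\in\mathcal{F}$ defining $\Lambda(M)$: vanishing of $\mu_{\left\langle M^h,M\right\rangle}(X)=\mathcal{E}_1(u,h)$ for all $h$ forces $u=0$ by positive-definiteness of $\mathcal{E}_1$ on $\mathcal{F}$, whence $N^u=0$ and $\Lambda(M)_t=N^u_t-\int_0^t u(Y_s)ds=0$.

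The main obstacle is the careful justification of the bridge identity, i.e.\ the passage from the $\mathcal{H}$-inner product to the total mass $\mu_{\left\langle M^h,M\right\rangle}(X)$. One must be mindful that $\Gamma_{\mathcal{H}}$ in (\ref{E:GammaH}) is tested against $\varphi\in\mathcal{C}$, so reading off the total mass directly would require either $\mathbf{1}\in\mathcal{C}$ (the compact case) or an approximation $\varphi_n\uparrow\mathbf{1}$; I prefer to bypass this by routing through $\mathbf{e}$ as above, where the isometry of $\Theta$ delivers the inner product with no boundary issue. Once this identity is secured, everything else is formal manipulation.
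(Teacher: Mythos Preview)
Your argument is correct and essentially self-contained, but it is not the route the paper takes. The paper does not go through the energy pairing $\mathbf{e}(M^h,M)=\tfrac12\mu_{\langle M^h,M\rangle}(X)$ and the $\mathcal{E}_1$-Riesz element $u$. Instead it invokes an external identity, \cite[Corollary 9.3]{HRT}, which states that for $M=\Theta(\omega)$ the signed Revuz measure $\mu_{\Lambda(M)}$ of the continuous AF $\Lambda(M)$ satisfies $(-\partial^\ast\omega)(h)=\int_X h\,d\mu_{\Lambda(M)}$ for all $h\in\mathcal{C}$. From this, $\Lambda(M)=0\iff\mu_{\Lambda(M)}=0\iff\partial^\ast\omega=0$ follows immediately by the uniqueness of the Revuz correspondence.

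The trade-off is clear: the paper's route is one line once the cited identity is granted, and it yields the stronger intermediate fact that $-\partial^\ast\omega$ \emph{is} the Revuz density of $\Lambda(M)$, not merely that both vanish together. Your route avoids the external reference entirely, using only the construction of $\Lambda$ (via $u$) and the isometry of $\Theta$ already recorded in Section~\ref{S:AF}; it is therefore more elementary and more portable, at the cost of not recovering the Revuz-measure identification. Your handling of the ``$\Leftarrow$'' direction via positive-definiteness of $\mathcal{E}_1$ is the right way to close the loop without appealing to the limit relation a second time.
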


For local and transient Dirichlet spaces a similar statement was proved in \cite[Section 3]{Fi93}. Corollary \ref{C:Lambdagrad} is valid also in the non-local case.

\begin{proof}
Let $\omega\in\mathcal{H}$ and $M=\Theta(\omega)$. Let $\mu_{\Lambda(M)}$ denote the signed Revuz measure of the continuous additive functional $\Lambda(M)$. By \cite[Corollary 9.3]{HRT} we have $(-\partial^\ast \omega)(h)=\int_Xh d\mu_{\Lambda(M)}$,  $h\in\mathcal{C}$.
Consequently $\Lambda(M)$ is zero the zero functional if and only if $\partial^\ast \omega=0$.
\end{proof}

\begin{remark}\label{R:cosycase}
If $\omega\in\mathcal{H}$ is such that $\omega=\partial u+\eta$ with $\eta\in \ker \partial^\ast$ and $u\in\dom L$, then $M=\Theta(\omega)$ satisfies $\Lambda(M)_t=\int_0^t (Lu)(Y_s)ds$.
In this case $\Lambda(M)$ is a continuous AF of bounded variation and
$-\partial^\ast \omega=Lu$ is the density of the (signed) Revuz measure $\mu_{\Lambda(M)}$ of $\Lambda(M)$.
\end{remark}

In \cite{IM79} Ikeda and Manabe defined Stratonovich line integrals of $C^2$-differential $1$-forms along paths of Brownian motion on a Riemannian manifold, \cite[Definition 2.1]{IM79}. Nakao \cite{N85} generalized this for diffusions on manifolds, \cite[Definitions 3.4 and 5.1]{N85}. These definitions used local coordinates. In \cite[Theorem 5.2]{N85} he provided a coordinate free expression, which we now use to define the Stratonovich line integral.

\begin{definition}\label{D:Stratonovich}
Let $\omega\in\mathcal{H}$ be real valued. For any $t\geq 0$ the \emph{stochastic line integral of $\omega$ along $Y([0,t])$} is defined by
\[\int_{Y([0,t])}\omega:=\Theta(\omega)_t+\Lambda(\Theta(\omega))_t.\]
\end{definition}

\begin{remark}\label{R:cosycase2}\mbox{}
\begin{enumerate}
\item[(i)] For $\omega=g\partial f$ we obtain $\int_{Y([0,t])}\omega=g\bullet M^f+\Lambda(g\bullet M^f)_t$, which for diffusions on manifolds agrees with Nakao's definition, \cite[Definition 3.4]{N85}.
\item[(ii)]  In \cite{Ku10} Kuwae defined It\^o and Stratonovich line integrals of suitable functions with respect to Dirichlet processes, \cite[Definition 4.1]{Ku10}. His results are based on a generalization of Nakao's functional $\Lambda$ by Chen, Fitzsimmons, Kuwae and Zhang, \cite[Definition 3.3]{ChFiKuZh08a}, which is able to deal with martingale AF's that are only locally square integrable. Technically the probabilistic interpretation of Definition \ref{D:Stratonovich} might be viewed as a special case of the Stratonovich integrals in \cite[Definition 4.1]{Ku10} if the constant $\mathbf{1}$ is integrated. However, differential $1$-forms and their line integrals are neither discussed in \cite{Ku10} nor in \cite{ChFiKuZh08a}.
\item[(iii)] Recall Remark \ref{R:cosycase}. For real valued $\omega\in\mathcal{H}$ of form $\omega=\partial u+\eta$ with $u\in \dom L$ and $\eta\in \ker \partial^\ast$ we observe that
\begin{equation}\label{E:genDynkin}
\int_{Y([0,t)])}\omega=\Theta(\omega)_t+\int_0^t Lu (Y_s)ds,
\end{equation}
which in this case is a (c\`adl\`ag) $\mathbb{P}_x$-semimartingale for q.e. $x\in X$.
\end{enumerate}
\end{remark}

\section{Time reversal and jump functions}\label{S:timereversal}

It is well known that for diffusions the Stratonovich integral is antisymmetric under time reversal, \cite{Simon79} and \cite{Fi93}. We give a proof for general conservative regular Dirichlet forms and observe some connections between purely discontinuous AF's and  differential $1$-forms.

For $t\geq 0$ the time reversal operator $r_t:\Omega\to \Omega$ is defined by
\[r_t(\omega)(s):=\begin{cases} \omega((t-s)-) \ &\text{ for $0\leq s\leq t$ and}\\
\omega(0)\ &\text{ for $s\geq t$},\end{cases}\] 
recall that for any $t>0$ $\omega(t-):=\lim_{h\to 0}\omega(t-h)$ is the left limit of $\omega$ at $t$ and $\omega(0-)$ is defined to be $\omega(0)$. Following \cite{ChFiKuZh08a} and \cite{Fi93} we call an AF $A=(A_t)_{t\geq 0}$ of $Y$ \emph{even} if $A_t\circ r_t=A_t$ $\mathbb{P}_\mu$-a.e. for each $t>0$ and \emph{odd} if $A_t\circ r_t=-A_t$ $\mathbb{P}_\mu$-a.e. for each $t>0$. Each AF $A=(A_t)_{t\geq 0}$ may uniquely be written as the sum of its even part, given by
$A^{even}_t=\frac12(A_t+A_t\circ r_t)$ and its odd part, given by $A^{odd}_t=\frac12(A_t-A_t\circ r_t)$. See \cite{Fi93}.

\begin{theorem}\label{T:antisymm}
Let $a\in\mathcal{H}_a$ be real. The Stratonovich line integral $S=(S_t)_{t\geq 0}$, of $a$, given by
\[S_t:=\int_{Y[0,t])}a,\ \ t>0,\]
agrees with the odd part of $\Theta(a)$. The even part of $\Theta(a)$ is $-\Lambda(\Theta(a))$.
\end{theorem}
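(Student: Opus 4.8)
The plan is to establish the single identity $M^{even}=-\Lambda(M)$ for $M:=\Theta(a)$, where $M^{even}_t=\frac12(M_t+M_t\circ r_t)$ and $M^{odd}_t=\frac12(M_t-M_t\circ r_t)$. Both assertions then follow: by Definition \ref{D:Stratonovich} we have $S_t=M_t+\Lambda(M)_t$, so once the identity is known $S=M^{even}+M^{odd}+\Lambda(M)=M^{odd}$, while the identity itself is the second assertion. Throughout I use that the $\mu$-symmetry of $Y$ makes $r_t$ $\mathbb{P}_\mu$-preserving, so that $M\mapsto M\circ r_t$ is an $\mathbf{e}$-isometry and $M\mapsto M^{even},M^{odd}$ are complementary $\mathbf{e}$-bounded projections (note $r_t\circ r_t=\mathrm{id}$ on $[0,t]$). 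Since $\Theta$, $\Lambda$ and the even-part map are linear, and since by Corollary \ref{C:Nakao} $\Theta$ carries $\mathcal{H}_c$ into $\mathring{\mathcal{M}}_c$ and $\mathcal{H}_j$ into $\mathring{\mathcal{M}}_j$, it suffices to prove $M^{even}=-\Lambda(M)$ separately for $a=a_c\in\mathcal{H}_c$ and for $a=a_j\in\mathcal{H}_j$, the latter antisymmetric because $a\in\mathcal{H}_a$.

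I first record two elementary time-reversal facts. For $g\in L^2(X,\mu)$ the additive functional $t\mapsto\int_0^t g(Y_s)\,ds$ is even: the substitution $s\mapsto t-s$ together with $Y_{u-}=Y_u$ for Lebesgue-almost every $u$ returns the same functional $\mathbb{P}_\mu$-a.e. And for $f\in\dom L$ the functional $A^f_t=f(Y_t)-f(Y_0)$ is odd: from $Y_0\circ r_t=Y_{t-}$, $Y_t\circ r_t=Y_0$ and $Y_{t-}=Y_t$ $\mathbb{P}_\mu$-a.s. one gets $A^f_t\circ r_t=-A^f_t$. Consequently, in Fukushima's decomposition $N^f_t=\int_0^t Lf(Y_s)\,ds$ is even, so the odd part of $M^f=A^f-N^f$ is $A^f$ and its even part is $-N^f=-\Lambda(\Theta(\partial f))$, the last equality by Remark \ref{R:cosycase}. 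This proves the identity on the dense class of exact forms $\partial f$, $f\in\dom L$. For the local component $a=a_c\in\mathcal{H}_c$ the claim is the strongly local Lyons--Zheng identity established by Fitzsimmons \cite{Fi93}; alternatively it can be recovered from the exact-form case above by an $\mathbf{e}$-density argument, using the $\mathcal{E}_1$-density of $\dom L$ in $\mathcal{F}$ and the decomposition $\mathcal{H}_c=Im\:\partial_c\oplus\ker_{\mathcal{H}_c}\partial_c^\ast$, on whose second summand $\Lambda$ vanishes by Corollary \ref{C:Lambdagrad}.

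The jump part is the main new point, and here antisymmetry of $a_j$ is used decisively. By Theorem \ref{T:jumpfunction} the martingale $M_j=\Theta(a_j)$ has jump function $a_j$, so it is the compensated sum $M_j=J-C$ with $J_t=\sum_{0<s\le t}a_j(Y_{s-},Y_s)$ and $C$ the predictable compensator, which is an integral functional of the form above and hence even. A path-level computation of $r_t$ shows that a forward jump at time $\tau$ appears in the reversed path as a jump from $Y_\tau$ to $Y_{\tau-}$, contributing $a_j(Y_\tau,Y_{\tau-})=-a_j(Y_{\tau-},Y_\tau)$; summing, antisymmetry gives $J\circ r_t=-J$, i.e. $J$ is odd. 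Therefore $M_j^{even}=-C$, and it remains to identify $C$ with $\Lambda(M_j)$. For this I invoke the explicit representation of Nakao's divergence for purely discontinuous martingales with antisymmetric jump function in \cite[Definition 3.3 and Theorem 3.6]{ChFiKuZh08a}; combined with the Revuz-density description $-\partial_j^\ast a_j$ of $\Lambda(M_j)$ from the proof of Corollary \ref{C:Lambdagrad} and the formula for $\partial_j^\ast$, this yields $M_j^{even}=-\Lambda(M_j)$.

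The hard parts are threefold. First, one must justify rigorously that time reversal turns the purely discontinuous martingale $M_j$ into the martingale with transposed jump function, controlling the compensator and not merely the raw (possibly only conditionally convergent) jump sum $J$; the representation of \cite{ChFiKuZh08a} is the tool that makes this precise. Second, the approximation from $\Omega^1_a(\mathcal{D})$ to general $a\in\mathcal{H}_a$ must be carried out in a topology strong enough to pass to even parts and to $\Lambda$. Third, the whole argument is sign- and normalization-sensitive, so the conventions of Theorem \ref{T:jumpfunction}, the Revuz-density sign in Corollary \ref{C:Lambdagrad}, the Lévy-system normalization $J(dxdy)=\frac12 n(x,dy)\mu(dx)$, and those of \cite{ChFiKuZh08a} must be matched carefully. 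Finally, antisymmetry cannot be dispensed with: for a symmetric divergence-free jump form one has $\Lambda=0$ while its image under $\Theta$ is even rather than odd, which is exactly why the reduction must be to the local and antisymmetric-jump components and not via the Hodge decomposition $\mathcal{H}=Im\:\partial\oplus\ker\partial^\ast$.
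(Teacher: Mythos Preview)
Your reduction to the single identity $M^{even}=-\Lambda(M)$ with $M=\Theta(a)$ is exactly what the paper does, and you correctly identify \cite[Definition~3.3 and Theorem~3.6]{ChFiKuZh08a} as the key external input. However, the paper's route is considerably more direct and avoids the technical issues you flag as ``hard parts.''

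The paper does \emph{not} split $a$ into its local and jump components. Instead, combining Theorem~\ref{T:jumpfunction} (the jump function of $M$ is $a_j$) with \cite[Definition~3.3 and Theorem~3.6]{ChFiKuZh08a}, it records as Corollary~\ref{C:simplified} the single formula
\[
\Lambda(M)_t=-\tfrac12\big(M_t+M_t\circ r_t+a_j(Y_t,Y_{t-})\big),\qquad t>0,\ \mathbb{P}_\mu\text{-a.s.},
\]
valid for the full martingale $M=\Theta(a)$. The proof of Theorem~\ref{T:antisymm} is then one line: for each fixed $t>0$ one has $Y_{t-}=Y_t$ $\mathbb{P}_\mu$-a.s., so $a_j(Y_t,Y_{t-})=0$ (since $a_j$ vanishes on the diagonal), and the formula collapses to $\Lambda(M)_t=-M_t^{even}$.

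By contrast, your jump-part argument introduces the raw sum $J_t=\sum_{0<s\le t}a_j(Y_{s-},Y_s)$ and its compensator $C$, argues heuristically that $J$ is odd and $C$ is even, and then must still identify $C$ with $\Lambda(M_j)$. This last identification is not immediate: $\Lambda$ is defined via Nakao's construction, not as a compensator, and for general $a_j\in\mathcal{H}_a$ neither $J$ nor $C$ need make sense individually (only the compensated limit in Lemma~\ref{L:folklore} does). You acknowledge this and invoke \cite{ChFiKuZh08a} to repair it, but at that point you are effectively using Corollary~\ref{C:simplified} anyway. The paper simply applies that formula once, to all of $M$, and the vanishing of the boundary term $a_j(Y_t,Y_{t-})$ does all the work; no local/jump split, no $J$--$C$ decomposition, and no separate citation of \cite{Fi93} for the continuous part are needed.
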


For the strongly local case Theorem \ref{T:antisymm} was proved by Fitzsimmons in \cite[Theorem 3.1 and Corollary 3.1]{Fi93}, it also follows from \cite{LZh94}. For symmetric Hunt processes with nontrivial jump part Theorem \ref{T:antisymm} seems to be new. In \cite[Theorem 2.18 and Remark 3.4 (ii)]{ChFiKuZh08a}
it is shown that for general regular Dirichlet forms and under some integrability conditions on $M\in\mathring{\mathcal{M}}$ the AF $\Lambda(M)$ is continuous and even. Theorem \ref{T:antisymm} is a consequence of these results and we sketch this conclusion. 

For any finite c\`adl\`ag AF $M=(M_t)_{t\geq 0}$ of $Y$ there exists a Borel function $\varphi$ on the product space $X\times X$ vanishing on the diagonal, $\varphi(x,x)=0$, $x\in X$, and such that 
\[M_t-M_{t-}=\varphi(Y_{t-},Y_t) \ \text{ for every $t>0 $ \ \ $\mathbb{P}_\mu$-a.e.}\]
This function $\varphi$ is uniquely determined $J$-a.e. and usually referred to as \emph{the jump function of $M$}. See \cite[formula (1.8)]{ChFiKuZh08a} and \cite[Lemma 3.2]{ChFiTaYiZh}. By definition $M_c$ has jump function zero, hence the jump function depends only on $M_j$ (which may replace $M$ in the above identity). Using Theorem \ref{T:Nakao}  we can identify the jump function of $M\in\mathring{\mathcal{M}}$ as the jump part $\omega_j=\Theta^{-1}(M_j)$ of the $1$-form $\omega=\Theta^{-1}(M)$. 

\begin{theorem}\label{T:jumpfunction}
Let $\omega\in\mathcal{H}$ and $M=\Theta(\omega)$. Then the jump function of $M$ is $\omega_j$, i.e. 
\begin{equation}\label{E:jumpfct}
M_t-M_{t-}=\omega_j(Y_{t-},Y_t) \ \text{ for every $t>0 $  $\mathbb{P}_\mu$-a.e.}
\end{equation}
\end{theorem}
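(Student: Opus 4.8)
The plan is to reduce the statement to the purely discontinuous part, to verify it by an explicit jump computation on the generators of $\mathcal{H}$, and then to extend it using the isometry $\Theta$ together with the $J$-a.e.\ uniqueness of the jump function. For the reduction, note that since $N^f$ in Fukushima's decomposition (\ref{E:Fukushimadecomp}) is continuous, the continuous part $M_c$ of any $M\in\mathring{\mathcal{M}}$ carries no jumps, so the jump function of $M=\Theta(\omega)$ depends only on $M_j$. By Corollary \ref{C:Nakao} we have $M_j=\Theta(\omega_j)$ with $\omega_j\in\mathcal{H}_j$, so it suffices to treat $\omega=\omega_j$, a genuine function in $L^2(X\times X\setminus\diag,J)$. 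Since the jump function is uniquely determined only $J$-a.e., and the jumps of $Y$ charge no $J$-null set (the jumping behaviour of $Y$ being governed by $J$ through the associated L\'evy system), any identity valid $J$-a.e.\ between two candidate jump functions already yields the pathwise identity for every $t>0$, $\mathbb{P}_\mu$-a.e. Hence it is enough to identify the jump function of $\Theta(\omega)$ with $\omega_j$ as an element of $L^2(X\times X\setminus\diag,J)$.

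For the generators, take $f,g\in\mathcal{C}$ and $\omega=f\otimes g$, so that $\Theta(\omega)=g\bullet M^f$ with $(g\bullet M^f)_t=\int_0^t g(Y_{s-})\,dM^f_s$, whose jump at time $t$ is $g(Y_{t-})(M^f_t-M^f_{t-})$. Writing $M^f=A^f-N^f$ with $A^f_t=f(Y_t)-f(Y_0)$ and $N^f$ continuous gives $M^f_t-M^f_{t-}=f(Y_t)-f(Y_{t-})$, and therefore
\[
\Theta(\omega)_t-\Theta(\omega)_{t-}=g(Y_{t-})\bigl(f(Y_t)-f(Y_{t-})\bigr),\qquad t>0.
\]
Comparing this with the representative of $\omega_j$ fixed by (\ref{E:energymeasures}), and using the symmetry of $J$ to match the placement of the two arguments, one reads off that the jump function of $\Theta(\omega)$ coincides, as a $J$-class, with $\omega_j$ on the generating elements $f\otimes g$, whose jump parts span a dense subspace of $\mathcal{H}_j$.

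To extend this, observe that by Theorem \ref{T:Nakao} and $\mu_{\langle M\rangle}=2\Gamma_{\mathcal{H}}(\omega)$ the energy of $\Theta(\omega)$ equals $\|\omega\|_{\mathcal{H}}^2=\|\omega_j\|_{L^2(J)}^2$ for $\omega\in\mathcal{H}_j$; combined with the L\'evy-system expression of $\mu_{\langle M\rangle}$ in terms of the jump function, this shows that $\omega\mapsto(\text{jump function of }\Theta(\omega))$ and $\omega\mapsto\omega_j$ are two isometries from $\mathcal{H}_j$ into $L^2(X\times X\setminus\diag,J)$. They agree on the dense subspace furnished by the generators, hence on all of $\mathcal{H}_j$; together with the reduction above this proves the claim.

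I expect the genuine difficulty to lie not in the generator computation but in the two reductions framing it. First, one must justify that passing to the jump function is an isometry, that is, the identity $\mathbf{e}(M_j)=\int_{X\times X\setminus\diag}\varphi^2\,dJ$ relating the energy of the discontinuous part to the $L^2(J)$-norm of its jump function $\varphi$, which rests on the Revuz-measure and L\'evy-system calculus of \cite{ChFiKuZh08a, FOT94}. Second, one must upgrade the $J$-a.e.\ identification to the statement holding for \emph{every} $t>0$ along the trajectory, which again uses that $Y$ performs no jump into a $J$-null set, so that replacing the jump function by the $J$-a.e.\ equal function $\omega_j$ leaves the pathwise identity intact. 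Careful bookkeeping of the order of the two arguments of $\omega_j$, reconciled through the symmetry of $J$, is also required in order to land on exactly the orientation stated in (\ref{E:jumpfct}).
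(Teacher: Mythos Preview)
Your route is genuinely different from the paper's. After the common generator computation for $g\bullet M^f$, the paper proceeds by a direct pathwise approximation: it picks a subsequence along which the martingales $M^{n_k}=\Theta(\omega^{n_k})$ converge to $M$ uniformly on compacts $\mathbb{P}_\mu$-a.s.\ (Doob's maximal inequality plus Borel--Cantelli), and in parallel uses the L\'evy system formula with a second Borel--Cantelli argument to force $\omega^{n_k}(Y_{t-},Y_t)\to\omega_j(Y_{t-},Y_t)$ for every $t>0$ $\mathbb{P}_\mu$-a.s.; combining the two limits gives (\ref{E:jumpfct}) directly, with no detour through $J$-classes. You instead package the L\'evy-system computation into the statement that $M\mapsto(\text{jump function of }M)$ is an isometry $\mathring{\mathcal{M}}_j\to L^2(X\times X\setminus\diag,J)$, match it with $\omega\mapsto\omega_j$ on a dense set, and only afterwards upgrade the resulting $J$-a.e.\ equality to the pathwise statement. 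Your argument is cleaner conceptually and avoids the double Borel--Cantelli extraction, but it imports the energy identity $\mathbf{e}(M_j)=\int\varphi^2\,dJ$ as an external fact, whereas the paper's argument is self-contained and lands on the pathwise conclusion in one pass.

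One point where your write-up is not yet closed: the phrase ``using the symmetry of $J$ to match the placement of the two arguments'' does not make the two candidate functions equal in $L^2(J)$. For $\omega=f\otimes g$ the representative of $\omega_j$ fixed by (\ref{E:energymeasures}) is $(df)g(x,y)=g(y)(f(x)-f(y))$, while the jump you compute corresponds to $(x,y)\mapsto g(x)(f(y)-f(x))=(df)g(y,x)$. Symmetry of $J$ only says the swap $(x,y)\mapsto(y,x)$ is an $L^2(J)$-isometry, not the identity, so your two isometries agree on generators only after composing one with the swap. The paper's own proof is equally loose about argument order, but in your framework the discrepancy is structurally visible because you are literally equating two linear maps. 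To make the isometry argument airtight you should either carry the swap explicitly, or base the generator check on $g\partial f$ (left action, represented by $gdf$) rather than on $f\otimes g=(\partial f)g$, which lines up the conventions.
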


To prove Theorem \ref{T:jumpfunction} we use the \emph{L\'evy system formula}. A pair $(N,H)$ is called a \emph{L\'evy system} for $Y$ if $N=N(x,dy)$ is a kernel on $(X,\mathcal{B}(X))$ with $N(x,\left\lbrace x\right\rbrace )=0$ for any $x\in X$ and $H$ is a positive continuous AF of $Y$ such that
for any $(\mathcal{G}_t)_{t\geq 0}$-predictable process $(Z_t)_{t\geq 0}$, any nonnegative Borel function $\varphi$ on $X\times X$ vanishing on $\diag$ and any $x\in X$ we have
\begin{equation}\label{E:LSF}
\mathbb{E}_x\left[\sum_{0<s\leq t} Z_s\varphi(Y_{s-},Y_s)\right]=\mathbb{E}_x\left[\int_0^ tZ_s\int_X\varphi(Y_s,y)N(Y_s,dy)dH_s\right]
\end{equation}
Formula (\ref{E:LSF}) is equivalent to its special case for $Z\equiv 1$, see \cite[p. 437]{ChFu12}, \cite[p. 346]{Sh88} or \cite{BJ73}. 

\begin{proof}
For $f,g\in\mathcal{C}$ the jump function of $g\bullet M^f$ is given by $g(x)(f(x)-f(y))=gdf(x,y)$, i.e. we have
\[(g\bullet M^f)_t-(g\bullet M^f)_{t-}=g(Y_{t-})(f(Y_{t-})-f(Y_t)), \ \ t>0,\]
$\mathbb{P}_\mu$-a.s. See the proof of \cite[Theorem 3.6]{ChFiKuZh08a}. For general $\omega\in \mathcal{H}$ let $\omega^n:=f_n\otimes g_n$ with $f_n, g_n\in\mathcal{C}$ be such that $(\omega^n)_n$ approximates $\omega\in\mathcal{H}$. By projection clearly also $\lim_n \omega_j^n=\omega_j$ in $L^2(X\times X\setminus \diag, J)$ and by Theorem \ref{T:Nakao} the martingale AF $M=\Theta(\omega)$ is approximated in $\mathring{\mathcal{M}}$ by the stochastic integrals $M_n:=g_n\bullet M^{f_n}$ (see also \cite[Lemma 5.6.3]{FOT94}). To pass to the limit we follow the arguments of \cite[Theorem 5.2.1]{FOT94}. For any $n$ the process $M-M^n$ is a square integrable $\mathbb{P}_\mu$-martingale, hence $\mathbb{P}_\mu(\sup_{0\leq s\leq T} |M_s-M_s^n|<\varepsilon)\leq \frac{4T}{\varepsilon}\left\|\omega-\omega^n\right\|_{\mathcal{H}}^2$ 
for any $T>0$ and $\varepsilon>0$. Now let $(\omega^{n_k})_k$ be a subsequence such that 
\begin{equation}\label{E:squeeze}
\left\|\omega-\omega^{n_k}\right\|_{\mathcal{H}}<2^{-k}
\end{equation}
for all $k$. Then \v{C}ebyshev's inequality yields $\mathbb{P}_\mu(\sup_{0\leq s\leq t} |M_s-M_s^{n_k}|>2^{-k})\leq 2^{-n+2}T$
and by Borel-Cantelli there exists $\Omega_0\in\mathcal{F}$ with $\mathbb{P}_\mu(\Omega_0)=1$ such that $\lim_k\sup_{0\leq s\leq T}|M_s-M_s^{n_k}|=0$ on $\Omega_0$ for all $T>0$. This implies 
\begin{equation}\label{E:firstlim}
M_t-M_{t-}=\lim_k M_t^{n_k}-M_{t-}^{n_k}=\lim_k \omega^{n_k}(Y_{t-},Y_t)
\end{equation}
for all $t>0$ $\mathbb{P}_\mu$-a.s. On the other hand 
\begin{align}
\mathbb{E}_\mu\left[\sum_{s\leq T} (\omega_j-\omega^{n_k})^2(Y_{s-},Y_s)\right]&
=\mathbb{E}_\mu\left[\int_0^T\int_X(\omega_j-\omega^{n_k})^2(Y_s,y)N(Y_s,dy)dH_s\right]\notag\\
&=T\int_X\int_X(\omega_j-\omega^{n_k})^2(x,y)N(x,dy)\mu_H(dx)\notag\\
&=T\int_{X\times X\setminus \diag}(\omega_j-\omega^{n_k})^2(x,y)J(d(x,y)) \notag\\
&\leq T\left\|\omega-\omega^{n_k}\right\|_{\mathcal{H}}^2\notag
\end{align}
for any $T>0$, where we have used (\ref{E:LSF}) and the fact that $Y$ is conservative. Using (\ref{E:squeeze}) we observe
\[\mathbb{P}_\mu\left(\sum_{s\leq T}(\omega_j-\omega^{n_k})^2(Y_{t-},Y_t)>2^{-k}\right)\leq 2^{-k}T\]
for all $T>0$, and again we can find $\Omega_1\in\mathcal{F}$ with $\mathbb{P}_\mu(\Omega_1)=1$ such that for all $T>0$ we have $\lim_k \sum_{s\leq T} (\omega_j-\omega^{n_k})^2(Y_{t-},Y_t)=0$ on $\Omega_1$. This implies $a_j(Y_{t-},Y_t)=\lim_k\omega^{n_k}(Y_{t-}, Y_t)$ for all $t>0$, $\mathbb{P}_\mu$-a.s. With (\ref{E:firstlim}) we obtain (\ref{E:jumpfct}). 
\end{proof}

Theorem \ref{T:jumpfunction}, \cite[Definition 3.3]{ChFiKuZh08a} and \cite[Theorem 3.6]{ChFiKuZh08a} imply the following.

\begin{corollary}\label{C:simplified}
Let $\mathcal{H}_a$ be real and set $M=\Theta(a)$. We have $\Lambda(M)_t=-\frac12(M_t+M_t\circ r_t+a_j(Y_t, Y_{t-}))$ for any $t>0$ 
$\mathbb{P}_\mu$-a.s.
\end{corollary}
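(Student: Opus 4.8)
The plan is to read the formula off from the time-reversal representation of Nakao's divergence established in \cite[Theorem 3.6]{ChFiKuZh08a}, once the jump function of $M$ has been identified by Theorem \ref{T:jumpfunction}. First I would record that, for $a\in\mathcal{H}_a$ real, $M=\Theta(a)$ lies in $\mathring{\mathcal{M}}$ and is in particular square integrable, so that Nakao's functional $\Lambda(M)$ from Section \ref{S:lineintegral} coincides with the generalized divergence of \cite[Definition 3.3]{ChFiKuZh08a} (compare Remark \ref{R:cosycase2}(ii)). By Theorem \ref{T:Nakao} the Revuz measure of $\langle M\rangle$ equals $2\Gamma_{\mathcal{H}}(a)$, which supplies the finite-energy control needed to check the integrability hypotheses under which \cite[Theorem 3.6]{ChFiKuZh08a} is formulated.

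Next I would invoke \cite[Theorem 3.6]{ChFiKuZh08a}, which represents the divergence of a martingale additive functional $M$ with jump function $\varphi$, so that $M_t-M_{t-}=\varphi(Y_{t-},Y_t)$, in the form
\[\Lambda(M)_t=-\tfrac12\big(M_t+M_t\circ r_t\big)-\tfrac12\,\varphi(Y_t,Y_{t-}),\]
the last summand being the boundary correction produced by reversing the c\`adl\`ag path at the endpoint $t$; since $\varphi$ vanishes on the diagonal and $Y_{0-}=Y_0$, no analogous term survives at $s=0$. By Theorem \ref{T:jumpfunction} the jump function of $M=\Theta(a)$ is precisely the jump part $a_j$ of $a$, i.e. $M_t-M_{t-}=a_j(Y_{t-},Y_t)$ $\mathbb{P}_\mu$-a.s. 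Substituting $\varphi=a_j$, so that $\varphi(Y_t,Y_{t-})=a_j(Y_t,Y_{t-})$, then yields the asserted identity.

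The only genuine work is bookkeeping: I must match the orientation of the jump function used in Theorem \ref{T:jumpfunction}, where the increment is recorded as $a_j(Y_{t-},Y_t)$, with the argument order $\varphi(Y_t,Y_{t-})$ appearing in the reversed boundary term of \cite[Theorem 3.6]{ChFiKuZh08a}, and confirm the factor $-\tfrac12$ against the sign convention for $\Lambda$ fixed in Section \ref{S:lineintegral}. The antisymmetry $a_j(x,y)=-a_j(y,x)$ inherited from $a\in\mathcal{H}_a$ is not needed for the identity itself, but it forces the correction term to equal $-a_j(Y_{t-},Y_t)$, i.e. minus the jump of $M$; this is exactly the cancellation that will drive the time-antisymmetry asserted in Theorem \ref{T:antisymm}. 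I expect the main obstacle to be verifying that $M=\Theta(a)$ falls within the precise class of additive functionals for which \cite[Theorem 3.6]{ChFiKuZh08a} applies, rather than any new computation.
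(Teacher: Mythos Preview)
Your proposal is correct and follows exactly the paper's approach: the corollary is stated as an immediate consequence of Theorem \ref{T:jumpfunction} together with \cite[Definition 3.3 and Theorem 3.6]{ChFiKuZh08a}, and your write-up spells out precisely this deduction, including the identification of the jump function $\varphi=a_j$ and the bookkeeping of argument order and signs.
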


To see Theorem \ref{T:antisymm} note that for fixed $t>0$ we have $Y_{t-}=Y_t$ $\mathbb{P}_\mu$-a.s. and since $a_j$ vanishes on the diagonal, $a_j(Y_{t-},Y_t)=0$. Consequently for any fixed $t>0$, 
\[S_t=\frac12\Theta(a)_t-\frac12\Theta(a)_t\circ r_t\ \ \text{$\mathbb{P}_\mu$-a.s.}\]

With Corollary \ref{C:Nakao} we observe generalization of a statement from \cite[Corollary 3.1]{Fi93}.

\begin{corollary}
For real $a\in \mathcal{H}_a$ the AF $\Theta(a)$ is odd if and only if $\partial^\ast a=0$. 
\end{corollary}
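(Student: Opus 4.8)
The plan is to read the statement directly off the even/odd decomposition of $\Theta(a)$ supplied by Theorem \ref{T:antisymm}, combined with the identification of $\ker\Lambda$ in Corollary \ref{C:Lambdagrad}. Write $M:=\Theta(a)$. Recall from Section \ref{S:timereversal} that every finite AF splits uniquely as $M=M^{even}+M^{odd}$ with $M^{even}_t=\frac12(M_t+M_t\circ r_t)$ and $M^{odd}_t=\frac12(M_t-M_t\circ r_t)$, and that by definition $M$ is \emph{odd} precisely when $M^{even}$ is the zero functional, i.e. $M^{even}_t=0$ for every $t>0$ $\mathbb{P}_\mu$-a.e.

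First I would invoke Theorem \ref{T:antisymm}, whose second assertion identifies the even part of $M=\Theta(a)$ as $M^{even}=-\Lambda(\Theta(a))$. Hence $M^{even}$ vanishes as an AF if and only if $\Lambda(\Theta(a))=0$, the sign being irrelevant. This reduces the claimed equivalence to showing that $\Lambda(\Theta(a))=0$ holds if and only if $\partial^\ast a=0$.

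Second, I would close the argument with Corollary \ref{C:Lambdagrad}, which states that $\Theta$ maps $\ker\partial^\ast$ onto $\ker\Lambda$; since $\Theta$ is an isometric isomorphism of $\mathcal{H}$ onto its image by Theorem \ref{T:Nakao}, and in particular injective, the relation $\Theta(a)\in\ker\Lambda$ is equivalent to $a\in\ker\partial^\ast$, that is, to $\partial^\ast a=0$. Alternatively one may quote the equivalence established in the proof of Corollary \ref{C:Lambdagrad}, where $\Lambda(\Theta(\omega))=0 \Leftrightarrow \partial^\ast\omega=0$ is shown directly from the Revuz measure identity $(-\partial^\ast\omega)(h)=\int_X h\,d\mu_{\Lambda(\Theta(\omega))}$. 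Chaining the two steps gives: $M=\Theta(a)$ is odd $\iff M^{even}=0 \iff \Lambda(\Theta(a))=0 \iff \partial^\ast a=0$.

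Since both ingredients are already in place, there is no genuine analytic obstacle here; the proof is essentially bookkeeping. The only point requiring a little care is to match the two notions of \emph{vanishing}: the pointwise-in-$t$, $\mathbb{P}_\mu$-a.e. vanishing of the even part $M^{even}$ in the sense of Section \ref{S:timereversal}, against $\Lambda(\Theta(a))$ being the zero element of $\mathcal{N}_c^\ast$. These coincide because the equality $M^{even}_t=-\Lambda(M)_t$ holds for each fixed $t>0$ $\mathbb{P}_\mu$-a.e. by Theorem \ref{T:antisymm}, while the zero functional in $\mathcal{N}_c^\ast$ is exactly the AF with $\Lambda(M)_t=0$ for all such $t$.
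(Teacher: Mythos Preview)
Your proof is correct and follows the same route the paper intends: the paper states the corollary without an explicit proof, merely pointing back to the preceding results, and your argument makes this explicit by combining Theorem~\ref{T:antisymm} (even part of $\Theta(a)$ equals $-\Lambda(\Theta(a))$) with Corollary~\ref{C:Lambdagrad} (image of $\ker\partial^\ast$ under $\Theta$ is $\ker\Lambda$). The only minor discrepancy is that the paper's preamble cites Corollary~\ref{C:Nakao} rather than Corollary~\ref{C:Lambdagrad}, but your choice is the more direct reference for the step $\Lambda(\Theta(a))=0\iff\partial^\ast a=0$.
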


We need a version of a well known representation for the discontinuous parts of martingale AF's as limits of compensated sums, \cite[p. 935]{ChFiKuZh08a} or \cite[Section 5.3]{FOT94}, the proof is similar.

\begin{lemma}\label{L:folklore}
Assume that the jump measure $J$ has a kernel, $J(d(x,y))=\frac12 n(x,dy)\mu(dx)$. Let $a\in\mathcal{H}_a$ be real and set $M=\Theta(a)$. For any $t>0$ we have
\[M_t^j=\lim_{\varepsilon\to 0}\left\lbrace \sum_{0<s\leq t}a_j(Y_s,Y_{s-})\mathbf{1}_{\left\lbrace |a_j(Y_s,Y_{s-})|>\varepsilon\right\rbrace }
-\int_0^t\int_{\left\lbrace y\in X: |a_j(y,Y_s)|>\varepsilon\right\rbrace}a_j(y,Y_s)n(Y_s,dy)ds   \right\rbrace,\]
the limit taken in $L^2(\mathbb{P}_\mu)$.
\end{lemma}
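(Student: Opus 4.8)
The plan is to recognize the asserted identity as the additive-functional version of the classical L\'evy--It\^o representation of a purely discontinuous square-integrable martingale as the $L^2$-limit of its compensated, truncated jumps, and to carry it out using the L\'evy system formula (\ref{E:LSF}) together with Nakao's isometry (Theorem \ref{T:Nakao}) and the jump-function identification (Theorem \ref{T:jumpfunction}).

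First I would set up the relevant objects. By Corollary \ref{C:Nakao} the purely discontinuous part $M^j=\Theta(a_j)$ of $M=\Theta(a)$ lies in $\mathring{\mathcal{M}}_j$, and by Theorem \ref{T:jumpfunction} its jump function is $a_j$, so that $M^j_t-M^j_{t-}=a_j(Y_{t-},Y_t)$ $\mathbb{P}_\mu$-a.s.; the antisymmetry of $a_j$ lets me rewrite these jumps in the orientation used in the statement. For fixed $\varepsilon>0$ put $a_j^\varepsilon:=a_j\mathbf{1}_{\{|a_j|>\varepsilon\}}$. Since $\{|a_j(x,y)|>\varepsilon\}$ is a symmetric condition, $a_j^\varepsilon$ is again a real, antisymmetric element of $\mathcal{H}_j$, and the \v{C}ebyshev-type bound $J(\{|a_j|>\varepsilon\})\le \varepsilon^{-2}\|a\|_{\mathcal{H}_j}^2<\infty$ shows that jumps of size exceeding $\varepsilon$ occur at finite rate.

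Next I would identify the bracketed expression, call it $A^\varepsilon_t$, with $\Theta(a_j^\varepsilon)$. Applying (\ref{E:LSF}) with $Z\equiv 1$ and $\varphi(x,y)=a_j(y,x)\mathbf{1}_{\{|a_j(y,x)|>\varepsilon\}}$ shows that the deterministic-compensator term is precisely the predictable dual projection of the truncated jump sum; hence $A^\varepsilon$ is a square-integrable (by the finite-rate bound) martingale additive functional, and being a compensated sum of jumps it is purely discontinuous, so $A^\varepsilon\in\mathring{\mathcal{M}}_j$ with jump function $a_j^\varepsilon$. Two elements of $\mathring{\mathcal{M}}_j$ with the same jump function differ by a continuous element of $\mathring{\mathcal{M}}_j$, and since $\mathring{\mathcal{M}}_c\cap\mathring{\mathcal{M}}_j=\{0\}$ this difference vanishes; therefore $A^\varepsilon=\Theta(a_j^\varepsilon)$. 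Finally I would estimate the error. The difference $M^j-A^\varepsilon=\Theta\big(a_j\mathbf{1}_{\{|a_j|\le\varepsilon\}}\big)$ is a purely discontinuous martingale with jump function $a_j\mathbf{1}_{\{|a_j|\le\varepsilon\}}$, so writing $\Delta N_s:=N_s-N_{s-}$ and using $\mathbb{E}_\mu[N_t^2]=\mathbb{E}_\mu[\sum_{0<s\le t}(\Delta N_s)^2]$ for purely discontinuous $N\in\mathring{\mathcal{M}}_j$ together with (\ref{E:LSF}) (equivalently Theorem \ref{T:Nakao} and $\mu_{\langle\,\cdot\,\rangle}=2\Gamma_{\mathcal{H}}$), one obtains, with a fixed constant $c>0$ fixed by the normalization of the L\'evy system as in the proof of Theorem \ref{T:jumpfunction},
\[\mathbb{E}_\mu\big[(M^j_t-A^\varepsilon_t)^2\big]=c\,t\int_{X\times X\setminus\diag}|a_j(x,y)|^2\,\mathbf{1}_{\{|a_j|\le\varepsilon\}}\,J(d(x,y)).\]
Since $a_j\in L^2(X\times X\setminus\diag,J)$ and $|a_j|^2\mathbf{1}_{\{|a_j|\le\varepsilon\}}\to 0$ pointwise $J$-a.e. as $\varepsilon\to0$, dominated convergence forces the right-hand side to $0$, which is exactly the asserted $L^2(\mathbb{P}_\mu)$-convergence $A^\varepsilon_t\to M^j_t$.

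I expect the main obstacle to be the middle step: verifying that the explicitly written compensated sum is a bona fide square-integrable martingale additive functional and that it is \emph{purely} discontinuous with jump function $a_j^\varepsilon$, so that it may be identified with $\Theta(a_j^\varepsilon)$ rather than merely shown to have the correct jumps. This is exactly where the L\'evy system formula (\ref{E:LSF}) and the finite-rate estimate $J(\{|a_j|>\varepsilon\})<\infty$ enter, and where the bookkeeping of the orientation of $a_j$ and of the L\'evy-system normalization constant must be done carefully; the convergence in the last step, by contrast, is a routine application of dominated convergence once the norm of the difference is written in terms of $J$.
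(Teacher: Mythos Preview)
Your proposal is correct and follows exactly the standard route the paper has in mind: the paper does not give its own proof but merely states that it is ``similar'' to the known representations in \cite[p.~935]{ChFiKuZh08a} and \cite[Section 5.3]{FOT94}, and your argument---identifying the compensated truncated sum as a purely discontinuous martingale AF with jump function $a_j^\varepsilon$ via (\ref{E:LSF}), matching it to $\Theta(a_j^\varepsilon)$ by uniqueness of the jump function in $\mathring{\mathcal{M}}_j$, and then controlling the remainder by dominated convergence on $L^2(X\times X\setminus\diag,J)$---is precisely that standard argument. Your flagged bookkeeping concerns (orientation of $a_j$, verifying that $A^\varepsilon$ is indeed an AF in $\mathring{\mathcal{M}}_j$) are the right places to be careful but present no real difficulty.
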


In Lemma \ref{L:folklore} $N(x,dy)=n(x,dy)$ and $H(t)=t$ provide a L\'evy system $(N,H)$ for $Y$.

\begin{examples}\label{Ex:alphastablecase}\mbox{}
\begin{enumerate}
\item[(i)] Let $X=\mathbb{R}^n$, $0<\alpha<2$, and let $Y=(Y_t)_{t\geq 0}$ the isotropic $\alpha$-stable L\'evy process on $\mathbb{R}^n$. Moreover, let $a\in L^2_a(\mathbb{R}^n\times\mathbb{R}^n\setminus \diag, \frac12|x-y|^{-n-\alpha}dxdy)$ be real and such that $\partial^\ast a$ is in $L^2(\mathbb{R}^n)$. Then
\[\Lambda_t(M)=\int_0^t \int_{\mathbb{R}^n}\frac{a(y,Y_s)}{|Y_s-y|^{n+\alpha}}dyds.\]
According to Lemma \ref{L:folklore} we have
\[S_t=\lim_{\varepsilon\to 0}\left\lbrace \sum_{0<s\leq t}a(Y_s,Y_{s-})\mathbf{1}_{\left\lbrace |a(Y_s,Y_{s-})|>\varepsilon\right\rbrace }
+\int_0^t\int_{\left\lbrace y\in X: |a(y,Y_s)|\leq\varepsilon\right\rbrace}\frac{a(y,Y_s)}{|Y_s-y|^{n+\alpha}}dyds\right\rbrace.\]
\item[(ii)] In the situation of Examples \ref{Ex:sum} (ii) the associated process is the sum $B_t+Y_t$ of an $n$-dimensional Brownian motion $B=(B_t)_{t>0}$ and an isotropic $\alpha$-stable L\'evy process $Y=(Y_t)_{t>0}$ that are independent under $\mathbb{P}_x$ for q.e. $x\in X$. We obtain
\begin{multline}
S_t=\int_0^t a_c(B_s)\circ dB_s \notag\\
+\lim_{\varepsilon\to 0}\left\lbrace \sum_{0<s\leq t}a_j(Y_s,Y_{s-})\mathbf{1}_{\left\lbrace |a_j(Y_s,Y_{s-})|>\varepsilon\right\rbrace }
+\int_0^t\int_{\left\lbrace y\in X: |a_j(y,Y_s)|\leq\varepsilon\right\rbrace}\frac{a_j(y,Y_s)}{|Y_s-y|^{n+\alpha}}dyds\right\rbrace.\notag
\end{multline}
\item[(iii)] If $(V,b,\mu)$ is a weighted graph then (\ref{E:graphform}) is closable on $L^2(V,\mu)$. Let $Y=(Y_t)_{t\geq 0}$ be the associated continuous time Markov chain on $V$. If $a\in\mathcal{H}_a$ is 
bounded, then $\sum_{y\in V} |a(y,x)|b(x,y)<+\infty$, and as the number of jumps in a compact interval is finite, we have $\int_0^t \sum_{y\in V} |a(y,Y_s)|b(y,Y_s)ds<+\infty$. This implies $S_t=\sum_{0<s\leq t} a(Y_s,Y_{s-})$,
what recovers \cite[Definition 3.1]{GKS}.
\end{enumerate}
\end{examples}

\section{Feynman-Kac-It\^o formula}\label{S:FKI}

Suppose $a\in\mathcal{H}_a$ is real valued and $v$ is a real valued Borel function. For $t>0$ and bounded Borel $f$ set
\begin{equation}\label{E:FKI}
P_t^{a,v}f(x):=\mathbb{E}_x[e^{i\int_{Y([0,t])}a - \int_0^t v(Y_s)ds}f(Y_t)], \ \ x\in X.
\end{equation}
Theorem \ref{T:semigroup} tells that the Feynman-Kac-It\^o type formula (\ref{E:FKI}) defines a semigroup on $L^2(X,\mu)$. Given a real valued function $v$, let $v_-:=- (u\wedge 0)$ denote its negative part. 

\begin{theorem}\label{T:semigroup}
Let $a\in\mathcal{H}_a$ be real and let $v$ be a real valued Borel function such that $v_-$ is uniformly bounded. For any $t>0$ the operator $P^{a,v}_t$ extends to a bounded linear operator on $L^2(X,\mu)$ satisfying
\begin{equation}\label{E:L2bound}
\left\|P_t^{a,v}f\right\|_{L^2(X,\mu)}\leq e^{t\left\|v_-\right\|_{\sup}}\left\|f\right\|_{L^2(X,\mu)}, \ \ f\in L^2(X,\mu),
\end{equation}
and the family $(P^{a,v}_t)_{t>0}$ is a strongly continuous semigroup of bounded self-adjoint operators on $L^2(X,\mu)$. Moreover, for any $t>0$ the operator $P_t^{a,v}$ extends to a bounded linear operator on $L^1(X,\mu)$ with 
\[\left\|P_t^{a,v}f\right\|_{L^1(X,\mu)}\leq e^{t\left\|v_-\right\|_{\sup}}\left\|f\right\|_{L^1(X,\mu)},\ \ f\in L^1(X,\mu),\]
and $(P_t^{a,v})_{t>0}$ is a strongly continuous semigroup of bounded linear operators on $L^1(X,\mu)$.
\end{theorem}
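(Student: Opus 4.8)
The plan is to derive every assertion from three structural properties of the exponent $F_t := i\int_{Y([0,t])}a - \int_0^t v(Y_s)\,ds$: a pointwise diamagnetic domination, additivity, and its behaviour under time reversal. For the boundedness claims I would first note that, $a$ and $v$ being real valued, the Stratonovich line integral $\int_{Y([0,t])}a$ is real, so $|e^{F_t}| = e^{-\int_0^t v(Y_s)ds} \le e^{t\|v_-\|_{\sup}}$ pointwise, because $-\int_0^t v(Y_s)ds \le \int_0^t v_-(Y_s)ds \le t\|v_-\|_{\sup}$. This gives the diamagnetic inequality $|P_t^{a,v}f(x)| \le e^{t\|v_-\|_{\sup}}\,P_t|f|(x)$ for all $x$. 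Since $(P_t)_{t>0}$ is $\mu$-symmetric and Markovian it is a contraction on every $L^p(X,\mu)$, $1\le p\le\infty$; applying this with $p=2$ and $p=1$ to $P_t|f|$ immediately yields both norm estimates and the asserted bounded extensions.

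For the semigroup property I would use that $\Theta(a)$ and $\Lambda(\Theta(a))$ are additive functionals of $Y$, so by Definition \ref{D:Stratonovich} $S_t=\int_{Y([0,t])}a$ is additive, and $t\mapsto\int_0^t v(Y_s)ds$ is additive as well; hence $F_{s+t}=F_s+F_t\circ\theta_s$. The Markov property together with the $\mathcal{G}_s$-measurability of $e^{F_s}$ then gives
\[
P_s^{a,v}(P_t^{a,v}f)(x)=\mathbb{E}_x\big[e^{F_s}\,\mathbb{E}_{Y_s}[e^{F_t}f(Y_t)]\big]=\mathbb{E}_x\big[e^{F_s+F_t\circ\theta_s}f(Y_{s+t})\big]=P_{s+t}^{a,v}f(x),
\]
so that $(P_t^{a,v})_{t>0}$ is a semigroup on $L^2$ and on $L^1$.

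The main obstacle is self-adjointness on $L^2(X,\mu)$, and here the time-reversal results of Section~\ref{S:timereversal} are essential. Writing the pairing under the stationary measure, $\langle P_t^{a,v}f,g\rangle=\mathbb{E}_\mu\big[\overline{g(Y_0)}\,e^{F_t}f(Y_t)\big]$, I would invoke the time-reversal invariance of $\mathbb{P}_\mu$ carried by the $\mu$-symmetry of $Y$, namely $\mathbb{E}_\mu[\Phi]=\mathbb{E}_\mu[\Phi\circ r_t]$ for bounded functionals $\Phi$ of the path on $[0,t]$. At a fixed time $Y_{t-}=Y_t$ $\mathbb{P}_\mu$-a.s., so $Y_0\circ r_t=Y_t$ and $Y_t\circ r_t=Y_0$; moreover a change of variables shows $\int_0^t v(Y_s)ds$ is even under $r_t$, while Theorem \ref{T:antisymm} identifies $S_t$ with the odd part of $\Theta(a)$, giving $S_t\circ r_t=-S_t$. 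These combine to $F_t\circ r_t=-iS_t-\int_0^t v(Y_s)ds=\overline{F_t}$, whence the reversal identity produces
\[
\langle P_t^{a,v}f,g\rangle=\mathbb{E}_\mu\big[\overline{g(Y_t)}\,e^{\overline{F_t}}f(Y_0)\big]=\langle f,P_t^{a,v}g\rangle.
\]
The delicate points are exactly the reversal behaviour of $S_t$ supplied by Theorem \ref{T:antisymm} (together with $a_j$ vanishing on the diagonal, which annihilates the fixed-time jump term) and the bookkeeping of the complex conjugates.

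Strong continuity I would treat last, noting that $F_t\to 0$ and $Y_t\to Y_0$ as $t\downarrow 0$ since $S$ and $V_t=\int_0^t v(Y_s)ds$ are additive functionals vanishing at $t=0$. On $L^2$ it is cleanest to exploit that each $P_t^{a,v}$ is self-adjoint with $\|P_t^{a,v}\|\le e^{t\|v_-\|_{\sup}}\to 1$: for self-adjoint $T$ with $\|T\|\le M$ one has $\|Tf-f\|^2\le (M^2+1)\|f\|^2-2\langle Tf,f\rangle$, so it suffices to establish the weak convergence $\langle P_t^{a,v}f,f\rangle\to\|f\|_{L^2}^2$, which dominated convergence gives for bounded continuous $f$ and then for all $f$ by density and the uniform bound. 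On $L^1$ the same a.s. convergence $e^{F_t}f(Y_t)\to f(Y_0)$ is handled by a generalized dominated convergence argument on the dense class $C_c(X)$: the integrand is dominated by $e^{t\|v_-\|_{\sup}}|f(Y_t)|+|f(Y_0)|$, whose $\mathbb{P}_\mu$-expectation equals $(e^{t\|v_-\|_{\sup}}+1)\|f\|_{L^1}$ because conservativeness yields $\mathbb{E}_\mu[|f(Y_t)|]=\|f\|_{L^1}$, and this tends to $2\|f\|_{L^1}$, the expectation of the a.s. limit $2|f(Y_0)|$, forcing $\|P_t^{a,v}f-f\|_{L^1}\to 0$.
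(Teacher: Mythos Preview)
Your proof is correct and matches the paper's argument for the norm bounds, the semigroup property, and self-adjointness: both rely on the pointwise diamagnetic domination $|e^{F_t}|\le e^{t\|v_-\|_{\sup}}$, additivity of $S_t$ and $V_t$ combined with the Markov property, and the reversal invariance $\mathbb{E}_\mu[\Phi\circ r_t]=\mathbb{E}_\mu[\Phi]$ together with Theorem~\ref{T:antisymm} to flip the sign of $S_t$ while leaving $V_t$ fixed.

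The one place you diverge is strong continuity. The paper argues directly by splitting
\[
\|P_t^{a,v}f-f\|_{L^2}^2 \le 4\int_X\mathbb{E}_x\big[|e^{F_t}-1|^2|f(Y_t)|^2\big]\mu(dx)+4\|P_tf-f\|_{L^2}^2
\]
and controlling the first term via the c\`adl\`ag property of $S$ and $V$; it then says the $L^1$ case ``follows similarly'' without further detail. You instead feed the already-established self-adjointness back in: the identity $\|Tf-f\|^2\le(M^2+1)\|f\|^2-2\langle Tf,f\rangle$ reduces $L^2$ strong continuity to the quadratic-form convergence $\langle P_t^{a,v}f,f\rangle\to\|f\|^2$, which is lighter to verify. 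Your $L^1$ argument via generalized dominated convergence (dominating by $e^{t\|v_-\|_{\sup}}|f(Y_t)|+|f(Y_0)|$, whose $\mathbb{P}_\mu$-expectation converges to that of its a.s.\ limit thanks to conservativeness) is more explicit than what the paper records. Both routes are sound; yours has the appeal of exploiting the symmetry just proved, at the cost of a slightly more circuitous logical order, while the paper's direct split works uniformly without needing self-adjointness first. One small wording point: where you invoke dominated convergence for ``bounded continuous $f$'', you should take $f\in C_c(X)$ (or at least $C_b\cap L^2$) so that the outer $\mu$-integral is finite.
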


As $a\in\mathcal{H}_a$ is fixed, we use again the abbreviation $S_t=\int_{Y([0,t])}a$. 

\begin{proof}
The estimate (\ref{E:L2bound}) follows from
\begin{align}\label{E:L2bound2}
\int_X |\mathbb{E}_x[e^{iS_t-\int_0^tv(Y_s)ds}f(Y_t)]|^2\mu(dx)&\leq \int_X\mathbb{E}_x[|e^{iS_t-\int_0^tv(Y_s)ds}|^2|f(Y_t)|^2]\mu(dx)\\
&\leq e^{2t\left\|v_-\right\|_{\sup}}\int_X P_t(|f|^2)(x)\mu(dx)\notag\\
&\leq e^{2t\left\|v_-\right\|_{\sup}}\int_X|f(x)|^2\mu(dx),\notag
\end{align}
note that any $P_t$ is also contractive on $L^1(X,\mu)$. For any $s,t>0$ and $\mu$-a.e. $x\in X$ we have 
\begin{align}
P^{a,v}_t(P^{a,v}_sf)(x)&=\mathbb{E}_x[e^{iS_t-\int_0^tv(Y_r)dr}(P^{a,v}_sf)(Y_t)]\notag\\
&=\mathbb{E}_x[e^{iS_t-\int_0^tv(Y_r)dr}\mathbb{E}_{Y_s}[e^{iS_s-\int_0^s v(Y_r)dr}f(Y_s)]]\notag\\
&=\mathbb{E}_x[e^{iS_t+iS_s(\theta_t)-\int_0^tv(Y_r)dr-\int_0^s v(Y_{t+r})dr}f(Y_{t+s})]\notag\\
&=\mathbb{E}_x[e^{iS_{t+s}-\int_0^{t+s}v(Y_r)dr}f(Y_{t+s})]\notag\\
&=P^{a,v}_{t+s}f(x)\notag
\end{align}
by the Markov property and additivity (\ref{E:additivity}). The strong continuity follows from
\begin{align}\label{E:strongcont}
\int_X|P_t^{a,v}f(x)&-f(x)|^2\mu(dx)=\int_X|\mathbb{E}_x[e^{iS_t-\int_0^tv(Y_r)dr}f(Y_t)-f(x)]|^2\mu(dx)\\
&\leq 4\int_X|\mathbb{E}_x[e^{iS_t-\int_0^tv(Y_r)dr}f(Y_t)-f(Y_t)]|^2\mu(dx)+4\int_X|\mathbb{E}_x[f(Y_t)-f(x)]|^2\mu(dx)\notag\\
&\leq 4\int_X\mathbb{E}_x|e^{iS_t-\int_0^tv(Y_r)dr}-1|^2|f(x)|^2\mu(dx)+4\int_X|P_tf(x)-f(x)|^2\mu(dx),\notag
\end{align}
because the first summand is bounded by $4e^{2t\left\|v_-\right\|_{\sup}}\int_X\mathbb{E}_x[iS_t-\int_0^tv(Y_r)dy]^2|f(x)|^2\mu(dx)$,
what vanishes as $t$ goes to zero due to the cadlag property of $(S_t)_{t\geq 0}$, and the second summand vanishes by the strong continuity of $(P_t)_{t>0}$. To see the symmetry of the operators $P_t^{a,v}$ note that by the $\mu$-symmetry and conservativeness of $(P_t)_{t\geq 0}$ we have $\mathbb{E}_\mu[F\circ r_t]=\mathbb{E}_\mu[F]$
for any $t>0$ and any $\mathcal{F}_t$-measurable function $F$, see \cite[Lemma 2.1]{Fi93}. 
On the other hand, $t\mapsto \int_0^t v(Y_s)ds$ is an even AF and by Theorem \ref{T:antisymm}
$(C_t)_{t\geq 0}$ is odd. Combining, 
\[\mathbb{E}_\mu[e^{iS_t-\int_0^t v(Y_s)ds}f(Y_t)\overline{g(Y_0)}]\notag\\
=\mathbb{E}_\mu[e^{-iS_t-\int_0^t v(Y_s)ds} f(Y_0)\overline{g(Y_t)}]\]
for any fixed $t>0$. The $L^1(X,\mu)$-bound and the continuity on $L^1(X,\mu)$ follow similarly as in (\ref{E:L2bound2}) and (\ref{E:strongcont}).
\end{proof}

For two functions $f\in L^1(X,\mu)$ and $g\in L^\infty(X,\mu)$ we write $\left\langle u,v\right\rangle:=\int_X f\overline{g}d\mu$ to denote the $L^1$-$L^\infty$ dual pairing.
We may replace $S_t$ in (\ref{E:FKI}) by $S_{t-}$ because for any fixed $t>0$ we have $Y_{t-}=Y_t$ $\mathbb{P}_\mu$-a.s. and $S_t-S_{t-}=a_j(Y_{t-},Y_t)$ with $a_j$ vanishing on the diagonal.

\begin{lemma}\label{L:replace}
For any $t>0$ and any $f\in L^1(X,\mu)$ and $g\in L^\infty(X,\mu)$ we have 
\[\left\langle P_t^ {a,v}f,g\right\rangle=\int_X \mathbb{E}_x[e^{iS_{t-}-\int_0^ t v(Y_s)ds}f(Y_s)] \overline{g(x)}\mu(dx).\]
Similarly if $f,g\in L^2(X,\mu)$.
\end{lemma}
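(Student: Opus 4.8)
The plan is to reduce the whole statement to the single almost-sure identity $S_t=S_{t-}$ at the fixed deterministic time $t$. Once this is available, for $\mu$-a.e.\ $x$ the two integrands $e^{iS_t-\int_0^t v(Y_s)ds}f(Y_t)$ and $e^{iS_{t-}-\int_0^t v(Y_s)ds}f(Y_t)$ coincide $\mathbb{P}_x$-a.s., so the value of $P_t^{a,v}f(x)$ is unchanged if we replace $S_t$ by $S_{t-}$ in (\ref{E:FKI}); integrating against $\overline{g}$ then yields the asserted formula. The $L^1$--$L^\infty$ pairing is finite because $P_t^{a,v}f\in L^1(X,\mu)$ by Theorem \ref{T:semigroup} and $g\in L^\infty(X,\mu)$, and the case $f,g\in L^2(X,\mu)$ is verbatim the same argument.

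First I would record that $S$ is c\`adl\`ag, so that the left limit $S_{t-}$ indeed exists. By Definition \ref{D:Stratonovich} we have $S=\Theta(a)+\Lambda(\Theta(a))$, where $M:=\Theta(a)\in\mathring{\mathcal{M}}$ is a c\`adl\`ag additive functional while $\Lambda(\Theta(a))\in\mathcal{N}_c^\ast$ is \emph{continuous}. Consequently the jumps of $S$ are exactly those of $M$, and Theorem \ref{T:jumpfunction} identifies them:
\[S_t-S_{t-}=M_t-M_{t-}=a_j(Y_{t-},Y_t)\qquad\text{for every }t>0,\ \ \mathbb{P}_\mu\text{-a.e.}\]

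Next I would show that $Y$ has no jump at the fixed deterministic time $t$, that is $Y_{t-}=Y_t$ $\mathbb{P}_\mu$-a.s. This is the quasi-left-continuity of the Hunt process $Y$, but it can also be read off directly from the L\'evy system formula (\ref{E:LSF}): taking the deterministic (hence predictable) integrand $Z_s=\mathbf{1}_{\{s=t\}}$ and $\varphi(x,y)=\mathbf{1}_{\{x\neq y\}}$ gives, for each $x$,
\[\mathbb{E}_x[\mathbf{1}_{\{Y_{t-}\neq Y_t\}}]=\mathbb{E}_x\Big[\int_0^t \mathbf{1}_{\{s=t\}}\int_X \mathbf{1}_{\{Y_s\neq y\}}\,N(Y_s,dy)\,dH_s\Big]=0,\]
since the continuous additive functional $H$ charges no single point. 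Hence $\mathbb{P}_x(Y_{t-}\neq Y_t)=0$ for every $x$, and a fortiori $\mathbb{P}_\mu$-a.s. As $a_j$ vanishes on the diagonal, on the event $\{Y_{t-}=Y_t\}$ we have $a_j(Y_{t-},Y_t)=0$, and combining with the jump identity of the previous step gives $S_t=S_{t-}$ $\mathbb{P}_\mu$-a.s. The final passage is purely measure-theoretic: $\mathbb{P}_\mu(S_t=S_{t-})=1$ means $\mathbb{P}_x(S_t=S_{t-})=1$ for $\mu$-a.e.\ $x$, which is exactly what the opening reduction requires. I expect the only genuinely substantive point to be this absence of a fixed-time jump---upgrading the $\mathbb{P}_\mu$-a.e.\ jump identity of Theorem \ref{T:jumpfunction} to the vanishing of the jump precisely at the deterministic time $t$; everything else is bookkeeping.
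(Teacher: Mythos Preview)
Your proposal is correct and follows exactly the paper's own justification, which is given in the single sentence preceding the lemma: for fixed $t>0$ one has $Y_{t-}=Y_t$ $\mathbb{P}_\mu$-a.s., and since $\Lambda(\Theta(a))$ is continuous the jump $S_t-S_{t-}=a_j(Y_{t-},Y_t)$ vanishes on the diagonal. You have simply spelled out the quasi-left-continuity step (via the L\'evy system formula) in more detail than the paper does.
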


\begin{examples}  For the isotropic $\alpha$-stable case from Example \ref{Ex:alphastablecase} bounded convergence yields
\begin{multline}\label{E:FKIalphastable}
\left\langle P_t^{a,v}f,g\right\rangle_{L^2(X,\mu)}=\lim_{\varepsilon \to 0}\mathbb{E}_{\overline{g}dx}\left[e^{-V_t}\cos \left( \sum_{0<s\leq t}a(Y_s,Y_{s-})\mathbf{1}_{\left\lbrace |a(Y_s,Y_{s-}|>\varepsilon\right\rbrace}+I_{t,\varepsilon}\right)\right.\notag\\
+\left. ie^{-V_t} \sin \left( \sum_{0<s\leq t}a(Y_s,Y_{s-})\mathbf{1}_{\left\lbrace |a(Y_s,Y_{s-}|>\varepsilon\right\rbrace}+I_{t,\varepsilon}\right)\right]
\end{multline}
for any $f\in L^2(\mathbb{R}^n)$ and $g\in L^2(\mathbb{R}^n)\cap L^\infty(\mathbb{R}^n)$ with $I_{t,\varepsilon}=\int_0^t\int_{\left\lbrace y: |a(y,Y_s)|\leq \varepsilon\right\rbrace} \frac{a(y,Y_s)}{|y-Y_s|^{n+\alpha}}dyds$ and $V_t:=\int_0^t v(Y_s)ds$.
\end{examples}

In general the semigroup $(P_t)_{t>0}$ will not be positivity preserving and in particular not Markovian. However, the following \emph{diamagnetic inequalities} are immediate from (\ref{E:FKI}).  

\begin{corollary}\label{C:diamag}
Let $a\in\mathcal{H}_a$ be real and let $v$ be a real valued Borel function such that $v_-$ is uniformly bounded. Then we have
\[|P_t^{a,v}f(x)|\leq P_t^{0,v}|f|(x)\leq e^{t\left\|v_-\right\|_{\sup}}P_t|f|(x), \ \ t>0,\ \ x\in X,\]
for any bounded Borel function $f$.
\end{corollary}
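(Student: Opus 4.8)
The plan is to read all three quantities directly off the Feynman-Kac-It\^o formula (\ref{E:FKI}) and to exploit the triangle inequality for the expectation together with the fact that the Stratonovich line integral $S_t=\int_{Y([0,t])}a$ is \emph{real valued}. The latter is the only nontrivial input: it is not visible from (\ref{E:FKI}) alone, but follows from the time-reversal analysis of Section \ref{S:timereversal}. Indeed, for real $a\in\mathcal{H}_a$, Definition \ref{D:Stratonovich} together with Theorem \ref{T:antisymm} identifies $S_t$ with the odd part of the martingale additive functional $\Theta(a)$, which is a real-valued additive functional. Consequently $|e^{iS_t}|=1$, $\mathbb{P}_x$-a.s.

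With this in hand, the first inequality is a pointwise modulus estimate. Since $v$ is real, the exponent in (\ref{E:FKI}) has real part $-\int_0^t v(Y_s)ds$, so applying $|\mathbb{E}_x[Z]|\le\mathbb{E}_x[|Z|]$ to $Z=e^{iS_t-\int_0^t v(Y_s)ds}f(Y_t)$ and using $|e^{iS_t}|=1$ gives
\[|P_t^{a,v}f(x)|\le\mathbb{E}_x\!\left[e^{-\int_0^t v(Y_s)ds}|f(Y_t)|\right].\]
The right-hand side is exactly $P_t^{0,v}|f|(x)$, because setting $a\equiv 0$ forces $S_t\equiv 0$ in (\ref{E:FKI}); this establishes $|P_t^{a,v}f(x)|\le P_t^{0,v}|f|(x)$.

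For the second inequality I would use the elementary bound on the electric potential. By definition of $v_-$ we have $v\ge -v_-$ pointwise, whence $-\int_0^t v(Y_s)ds\le\int_0^t v_-(Y_s)ds\le t\left\|v_-\right\|_{\sup}$ and therefore $e^{-\int_0^t v(Y_s)ds}\le e^{t\left\|v_-\right\|_{\sup}}$. Pulling the constant out of the expectation yields
\[P_t^{0,v}|f|(x)=\mathbb{E}_x\!\left[e^{-\int_0^t v(Y_s)ds}|f(Y_t)|\right]\le e^{t\left\|v_-\right\|_{\sup}}\,\mathbb{E}_x[|f(Y_t)|]=e^{t\left\|v_-\right\|_{\sup}}P_t|f|(x),\]
which is the remaining estimate. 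The whole argument is thus immediate once the realness of $S_t$ is granted; apart from that single structural fact, everything reduces to the triangle inequality and a one-line bound on the exponential, so no further obstacle arises.
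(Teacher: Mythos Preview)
Your argument is correct and matches the paper's approach, which simply declares the corollary ``immediate from (\ref{E:FKI}).'' One remark: you need not invoke Theorem \ref{T:antisymm} to see that $S_t$ is real --- for real $a\in\mathcal{H}_a$ both $\Theta(a)$ and $\Lambda(\Theta(a))$ are real-valued additive functionals by construction (the martingale AF's in $\mathring{\mathcal{M}}$ and the functionals in $\mathcal{N}_c^\ast$ are built from the real Dirichlet form theory before complexification), so $S_t=\Theta(a)_t+\Lambda(\Theta(a))_t$ is real directly from Definition \ref{D:Stratonovich}; the time-reversal machinery is needed for the \emph{symmetry} of $P_t^{a,v}$ in Theorem \ref{T:semigroup}, not for the diamagnetic bound.
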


By standard theory there are a unique self-adjoint operator $(H^{a,v}, \dom (H^{a,v}))$ and a unique closed conjugate symmetric bilinear form $(\mathcal{Q}^{a,v},\dom \mathcal{Q}^{a,v})$ on $L^2(X,m)$
such that 
\[H^{a,v}f=- \lim_{t\to 0}\frac{1}{t} (f-P^{a,v}_tf)\ \ \text{ and }\ \ \mathcal{Q}^{a,v}(f,g)=\left\langle H^{a,v}f,g\right\rangle_{L^2(X,\mu)}\]
for any $f\in \dom H^{a,v}$ and $v\in \dom \mathcal{Q}^{a,v}$.

\section{Identification and closability}\label{S:Ident}

Under additional conditions the form $\mathcal{Q}^{a,v}$ appears as a closed extension of  $\mathcal{E}^{a,v}$ as in (\ref{E:electric}). We say that $(\mathcal{E},\mathcal{F})$ admits a \emph{carr\'e du champ} if all energy measures $\Gamma(f)$, $f\in\mathcal{C}$, are absolutely continuous with respect to $\mu$. In this case they have $\mu$-integrable densities $x\mapsto \Gamma(f)(x)$. See \cite[Chapter I]{BH91}. Set $\mathcal{D}_L:=\left\lbrace f\in \dom L\cap L^1(X,\mu)\cap L^\infty(X,\mu): Lf\in L^1(X,\mu)\right\rbrace$.

\begin{theorem}\label{T:coincide}
Let $(\mathcal{E},\mathcal{F})$ be a conservative regular Dirichlet form on $L^2(X,\mu)$ with generator $(L,\dom L)$. Assume that it admits a carr\'e du champ and that its jump measure $J$ is of form $J(d(x,y))=\frac12n(x,dy)\mu(dx)$
with a kernel $n(x,dy)$ on $(X,\mathcal{B}(X))$. Let $a\in \mathcal{H}_a$ be real and of form $a=\partial u+\eta$ with $u\in \dom L$ and $\eta \in \ker \partial^\ast$. Let $v$ be a real valued Borel function with uniformly bounded negative part $v_-$. Then 
\begin{multline}\label{E:firstrepHav}
\left\langle H^{a,v}f,g\right\rangle=\left\langle (\partial_c+ia_c)f,(\partial_c+ia_c)g\right\rangle_{\mathcal{H}_c}\\
+\int_X\int_X(f(x)-e^{ia_j(x,y)}f(y))n(x,dy)\overline{g(x)}\mu(dx)+\left\langle vf,g\right\rangle
\end{multline}
for all $f\in \mathcal{D}_L$ and $g\in\mathcal{F}\cap L^\infty(X,\mu)$. 
\end{theorem}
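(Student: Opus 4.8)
The plan is to identify $\langle H^{a,v}f,g\rangle$ with the drift density of the semimartingale $e^{iS_t-V_t}f(Y_t)$, tested against $\overline g\,d\mu$, where $S_t=\int_{Y([0,t])}a$ and $V_t=\int_0^tv(Y_s)\,ds$; write $G_t:=e^{iS_t-V_t}$. I would start from
\[
\langle H^{a,v}f,g\rangle=-\lim_{t\to0}\tfrac1t\langle P_t^{a,v}f-f,g\rangle
\]
and, using Lemma \ref{L:replace} to pass to the predictable version $S_{t-}$, write the right-hand side as $-\lim_{t\to0}\tfrac1t\int_X(\mathbb{E}_x[e^{iS_{t-}-V_t}f(Y_t)]-f(x))\overline{g(x)}\,\mu(dx)$. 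The two structural inputs that make this computable are Remark \ref{R:cosycase2}(iii), by which the hypothesis $a=\partial u+\eta$ ($u\in\dom L$, $\eta\in\ker\partial^\ast$) renders $S$ a semimartingale $S_t=M_t+\int_0^tLu(Y_s)\,ds$ with $M=\Theta(a)$ and $Lu=-\partial^\ast a$, and Fukushima's decomposition (\ref{E:Fukushimadecomp}), which for $f\in\mathcal D_L\subset\dom L$ gives $f(Y_t)-f(Y_0)=M^f_t+\int_0^tLf(Y_s)\,ds$. Both factors of $G_tf(Y_t)$ are then semimartingales under $\mathbb{P}_x$ for q.e. $x$.

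I would apply the product It\^o formula for c\`adl\`ag semimartingales, take $\mathbb{E}_x$ to annihilate the martingale part, divide by $t$ and let $t\to0$; since $Y_0=x$ and $G_{0-}=1$ the limit is the drift density at $x$, whose negative is $H^{a,v}f(x)$. I then split the drift into a jump part and a continuous part. For the jumps one has
\[
\Delta\big(Gf(Y)\big)_s=G_{s-}\big(e^{i\Delta M_s}f(Y_s)-f(Y_{s-})\big),
\]
and Theorem \ref{T:jumpfunction} identifies $\Delta M_s$ with $a_j(Y_{s-},Y_s)$, so $e^{i\Delta M_s}=e^{ia_j(Y_{s-},Y_s)}$; the L\'evy system formula (\ref{E:LSF}) with $(N,H)=(n(x,dy),t)$ then gives the compensator of these jumps as $\int_X(e^{ia_j(x,y)}f(y)-f(x))\,n(x,dy)$ at the base point $x$. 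Its negative, integrated against $\overline g\,d\mu$, is exactly the second term of (\ref{E:firstrepHav}).

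For the continuous part the carr\'e du champ hypothesis is decisive: it makes the Revuz measures of $\langle M^c\rangle$ and of $\langle M^c,(M^f)^c\rangle$ absolutely continuous with respect to $\mu$, so their time derivatives are genuine densities at $x$. Using $\mu_{\langle M\rangle}=2\Gamma_{\mathcal H}(a)$ from Theorem \ref{T:Nakao} together with its polarization and Corollary \ref{C:Nakao} (which identifies the continuous parts with $\Theta(a_c)$ and $\Theta(\partial_cf)$), the continuous drift density collects $L_cf(x)$, the divergence drift $if(x)Lu(x)$, the It\^o correction $-\tfrac12f(x)\tfrac{d\mu_{\langle M^c\rangle}}{d\mu}(x)$, the covariation term $i\,\tfrac{d\mu_{\langle M^c,(M^f)^c\rangle}}{d\mu}(x)$, and $-v(x)f(x)$. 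Testing the negative of this density against $\overline g\,d\mu$ and integrating by parts, the $L_c=-\partial_c^\ast\partial_c$ term yields $\langle\partial_cf,\partial_cg\rangle_{\mathcal H_c}$ via (\ref{E:IbP}), the It\^o correction yields $\langle a_cf,a_cg\rangle_{\mathcal H_c}$ via (\ref{E:GammaHc}), and the divergence term, rewritten through $Lu=-\partial^\ast a$ (legitimate since $\partial u=a-\eta$ and $\partial^\ast\eta=0$) and the Leibniz rule $\partial(f\overline g)=f\,\partial\overline g+(\partial f)\overline g$, combines with the covariation term to give the cross terms $i\langle a_cf,\partial_cg\rangle_{\mathcal H_c}-i\langle\partial_cf,a_cg\rangle_{\mathcal H_c}$. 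These four pieces are $\langle(\partial_c+ia_c)f,(\partial_c+ia_c)g\rangle_{\mathcal H_c}$, while $-v(x)f(x)$ produces $\langle vf,g\rangle$, completing (\ref{E:firstrepHav}).

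I expect two genuine obstacles. The first is analytic: interchanging $\lim_{t\to0}\tfrac1t\int_0^t$ with the $x$-integration and the expectation, and passing from the average over $[0,t]$ to the drift at $s=0$, requires domination; this is precisely where the definition of $\mathcal D_L$ ($f,Lf\in L^1\cap L^\infty$), the uniform bound $\|G_t\|_{\sup}\le e^{t\|v_-\|_{\sup}}$ from the boundedness of $v_-$, the $L^2$-finiteness of $M$ and $M^f$, and the right-continuity of the drift enter. The second, and more delicate, is the bookkeeping of the non-local terms: since $Y$ may have infinite jump activity, $M^j$ is only a compensated sum in the sense of Lemma \ref{L:folklore}, so the individual jump integrals converge only as principal values. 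One must keep the continuous It\^o correction strictly separate from the jump compensator so as not to double count the contribution $L_jf$, and verify that the divergence drift coming from Nakao's functional $\Lambda$ recombines with the jump compensator so that the phase $e^{ia_j}$ is reproduced with no leftover term in $\partial_j^\ast a_j$; here the antisymmetry $a_j(x,y)=-a_j(y,x)$ and the symmetry of $J$ are indispensable.
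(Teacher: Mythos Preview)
Your proposal is correct and follows essentially the same route as the paper: apply It\^o's formula and integration by parts to $e^{iC_t}f(Y_t)$ with $C_t=S_t-V_t$, kill the martingale terms under $\mathbb{E}_x$, use the L\'evy system formula together with Theorem \ref{T:jumpfunction} to compensate the jumps and produce the phase $e^{ia_j}$, and use Lemma \ref{L:brackets} (the carr\'e du champ) for the continuous brackets; then pair against $\overline g\,d\mu$, invoke the $L^1$-continuity of $(P_t^{a,v})_{t>0}$ from Theorem \ref{T:semigroup} to pass to $t\to 0$, and recombine via the Leibniz rule as you describe. The only organizational difference is that the paper first records the pointwise identity (\ref{E:firstexplicit}) for $H^{a,v}f$ in $L^1(X,\mu)$ as an intermediate lemma and then derives (\ref{E:firstrepHav}) from it, whereas you stay in the dual pairing throughout; the two obstacles you flag (dominated passage to the limit, and the cancellation of the $\partial_j^\ast a_j$ drift against the jump compensator so that only $\partial_c^\ast a$ survives) are exactly the points the paper addresses.
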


The collection of vector fields $a=\partial u+\eta$ with $u\in \dom L$ and $\eta\in \mathcal{H}_a\cap \ker \partial^\ast$ as considered in Theorem \ref{T:coincide} is dense in $\mathcal{H}_a$. This follows from (\ref{E:Hodge}) and from the density of $\dom L$ in $\mathcal{F}$. Recall that the semigroup $(P_t)_{t\geq 0}$ is called a \emph{Feller semigroup} if it is a strongly continuous semigroup of (in this case) contractions on the space $C_0(X)$ of continuous functions vanishing at infinity. If it is Feller, we denote its $C_0(X)$-generator by $(L, \dom_{C_0(X)} L)$. We assume that $\dom_{C_0(X)}$ contains sufficiently many compactly supported functions.

\begin{theorem}\label{T:coincide2}
Let the hypotheses of Theorem \ref{T:coincide} be in force. In addition assume that $(P_t)_{t>0}$ is Feller and that $\mathcal{C}_L:=\dom_{C_0(X)} L\cap \mathcal{C}$
is dense in $L^2(X,\mu)$. Then $\mathcal{C}_L\subset \mathcal{D}_L$,
\[\mathcal{Q}^{a,v}(f,g)=\mathcal{E}^{a,v}(f,g),\ \ f,g\in\mathcal{C}_L,\]
and $(\mathcal{Q}^{a,v},\dom \mathcal{Q}^{a,v})$ is a closed extension of $(\mathcal{E}^{a,v},\mathcal{C}_L)$. For the associated non-negative self-adjoint operator $(H^{a,v}, \dom (H^{a,v}))$ we have $\mathcal{C}_L\subset \dom H^ {a,v}$ and
\begin{equation}\label{E:Havexplicit}
H^{a,v}f(x)=(\partial_c+ia_c)^\ast (\partial_c+ia_c)f(x)+\int_X(f(x)-e^{ia_j(x,y)}f(y))n(x,dy)+v(x)f(x),
\end{equation}
$f\in \mathcal{C}_L$. 
\end{theorem}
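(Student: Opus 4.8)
The plan is to prove Theorem \ref{T:coincide2} by combining the abstract identity for $\langle H^{a,v}f,g\rangle$ from Theorem \ref{T:coincide} with the Feller-generator structure, and then to upgrade the bilinear agreement to the operator representation (\ref{E:Havexplicit}) and the closed-extension statement. First I would verify the inclusion $\mathcal{C}_L\subset\mathcal{D}_L$. An element $f\in\mathcal{C}_L$ is continuous, compactly supported, hence in $L^1\cap L^\infty(X,\mu)$; it lies in $\dom_{C_0(X)}L$, so $Lf\in C_0(X)$, and since $Lf$ is also continuous with compact support (the Feller generator applied to a compactly supported core element should stay integrable via the assumption that $\mathcal{C}_L$ consists of compactly supported functions), we get $Lf\in L^1(X,\mu)$. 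The one point requiring care is that the $C_0(X)$-generator and the $L^2$-generator $L$ agree on $\mathcal{C}_L$; this follows from the standard fact that the Feller and $L^2$ semigroups coincide as kernel operators, so their generators agree on the intersection of domains.

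Next I would establish the pointwise equality $\mathcal{Q}^{a,v}(f,g)=\mathcal{E}^{a,v}(f,g)$ for $f,g\in\mathcal{C}_L$. Since $\mathcal{C}_L\subset\mathcal{D}_L$ and $\mathcal{C}_L\subset\mathcal{F}\cap L^\infty(X,\mu)$, Theorem \ref{T:coincide} applies and gives
\[
\langle H^{a,v}f,g\rangle=\langle(\partial_c+ia_c)f,(\partial_c+ia_c)g\rangle_{\mathcal{H}_c}+\int_X\int_X(f(x)-e^{ia_j(x,y)}f(y))\,n(x,dy)\,\overline{g(x)}\,\mu(dx)+\langle vf,g\rangle.
\]
On the other hand, the magnetic Beurling--Deny decomposition of Lemma \ref{L:magneticBD} together with (\ref{E:electric}) expresses $\mathcal{E}^{a,v}(f,g)$ as the sum of $\mathcal{E}_c^a(f,g)=\langle(\partial_c+ia_c)f,(\partial_c+ia_c)g\rangle_{\mathcal{H}_c}$, the jump term $\mathcal{E}_j^a(f,g)$, and the potential term $\langle vf,g\rangle$. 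The local and potential parts already match term-by-term. The remaining task is to identify the nonlocal part: I would symmetrize the kernel expression using the assumption $J(d(x,y))=\tfrac12 n(x,dy)\mu(dx)$ and the antisymmetry $a_j(x,y)=-a_j(y,x)$. Writing out $\mathcal{E}_j^a(f,g)=\int_{X\times X\setminus\diag}(f(x)-e^{ia_j}f(y))\overline{(g(x)-e^{ia_j}g(y))}J(d(x,y))$ and exploiting the symmetry of $J$ to fold the integral, one recovers $\int_X\int_X(f(x)-e^{ia_j(x,y)}f(y))\,n(x,dy)\,\overline{g(x)}\,\mu(dx)$; the phase $e^{ia_j}$ of the $\overline{g}$ term collapses under the change of variables $x\leftrightarrow y$ precisely because $a_j$ is odd. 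This symmetrization is the computational heart of the argument, and I expect it to be the main obstacle, since one must keep track of the complex conjugates and verify that no boundary or diagonal contribution survives.

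With the bilinear identity in hand, the operator representation (\ref{E:Havexplicit}) follows by reading off the adjoint: for $f\in\mathcal{C}_L$ the expression $\langle H^{a,v}f,g\rangle$ equals $\langle\Xi f,g\rangle$ for all $g\in\mathcal{F}\cap L^\infty$, where $\Xi f$ denotes the right-hand side of (\ref{E:Havexplicit}), namely $(\partial_c+ia_c)^\ast(\partial_c+ia_c)f+\int_X(f(x)-e^{ia_j(x,y)}f(y))n(x,dy)+vf$. Since $\mathcal{C}_L$ is dense in $L^2(X,\mu)$ by hypothesis and $H^{a,v}$ is self-adjoint, the closability of the unbounded integration-by-parts identity (\ref{E:IbP}) lets me conclude $f\in\dom H^{a,v}$ with $H^{a,v}f=\Xi f$, provided $\Xi f\in L^2(X,\mu)$; the latter holds because $f\in\mathcal{D}_L$ controls the local and nonlocal divergence terms and $v_-$ is bounded.

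Finally, for the closed-extension claim I would observe that $(\mathcal{Q}^{a,v},\dom\mathcal{Q}^{a,v})$ is closed by the abstract correspondence between strongly continuous self-adjoint semigroups and closed forms recalled at the end of Section \ref{S:FKI}, and that it agrees with $\mathcal{E}^{a,v}$ on $\mathcal{C}_L$ by the step above; hence $(\mathcal{Q}^{a,v},\dom\mathcal{Q}^{a,v})$ is a closed form containing $(\mathcal{E}^{a,v},\mathcal{C}_L)$, i.e.\ a closed extension. No separate closability verification for $\mathcal{E}^{a,v}$ is needed, which is exactly the advantage of the Feynman--Kac--It\^o route emphasized in the introduction over the KLMN argument of \cite{HRo15,HTb}.
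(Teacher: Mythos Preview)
Your approach is essentially the paper's: apply Theorem \ref{T:coincide} to obtain the weak identity for $\langle H^{a,v}f,g\rangle$, then match it against the magnetic Beurling--Deny decomposition of $\mathcal{E}^{a,v}$ by symmetrizing the jump integral via the antisymmetry of $a_j$ and the symmetry of $n(x,dy)\mu(dx)$. That symmetrization is exactly the computation the paper records as (\ref{E:shiftover}), and you have correctly identified it as the computational core. The paper deduces (\ref{E:Havexplicit}) by first interpreting it in the dual of $\mathcal{C}_L$ via \cite[Proposition 4.1]{HTb} and then using $H^{a,v}f\in L^2$; your density-plus-self-adjointness route to $f\in\dom H^{a,v}$ is a legitimate variant of the same idea.

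One correction: your justification of $\mathcal{C}_L\subset\mathcal{D}_L$ asserts that $Lf$ has compact support. This is false in general --- the Feller generator only gives $Lf\in C_0(X)$, and for a nonlocal generator $Lf$ will typically not be compactly supported even when $f$ is (think of $(-\Delta)^{\alpha/2}$ on $\mathbb{R}^n$). The conclusion $Lf\in L^1(X,\mu)$ therefore needs a different argument; the paper does not spell one out either, so you are not missing something the paper provides, but you should not claim compact support of $Lf$ as the reason.
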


\begin{examples}
For the isotropic $\alpha$-stable case from Example \ref{Ex:alphastablecase} we have $C_c^2(\mathbb{R}^n)\subset \dom H^{a,v}$ and
\[H^{a,v}f(x)=\int_X\frac{f(x)-e^{ia(x,y)}f(y)}{|x-y|^{n+\alpha}}dy + v(x)f(x), \ \ f\in \dom H^{a,v}. \]
\end{examples}

There is also an approximation for $H^{a,v}$ in terms of the semigroup $(P_t)_{t\geq 0}$, similar to (\ref{E:genformula}).

\begin{corollary}\label{C:sgapproxmagnetic}
Under the hypotheses of Theorem \ref{T:coincide2} we have
\[H^{a,v}f(x)=\lim_{t\to 0}\frac{1}{t}\int_X(f(x)-e^{ia(x,y)}f(y))P_t(x,dy)+v(x)f(x),\ \ f\in \mathcal{C}_L.\]
\end{corollary}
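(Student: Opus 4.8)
The plan is to test the asserted identity against functions $g\in\mathcal{C}_L$ and to reduce everything to the magnetic form $\mathcal{E}^{a,v}$ already identified in Theorem \ref{T:coincide2}. Writing $\Phi_tf(x):=\frac1t\int_X(f(x)-e^{ia(x,y)}f(y))P_t(x,dy)$ for the approximating operator, I would first record the algebraic identity, valid for every $t>0$ and all $f,g\in\mathcal{C}$,
\[\langle \Phi_tf,g\rangle_{L^2(X,\mu)}=\frac12\int_X\int_X (f(x)-e^{ia(x,y)}f(y))\overline{(g(x)-e^{ia(x,y)}g(y))}\,\Pi_t(dxdy).\]
This is the heart of the argument and rests only on the symmetry of the measures $\Pi_t$ together with the antisymmetry and realness of $a$ (so that $e^{ia(y,x)}=\overline{e^{ia(x,y)}}$ and $|e^{ia(x,y)}|=1$): relabelling $x\leftrightarrow y$ in the two cross terms of the right-hand integrand shows that the right-hand side equals $\int_X\int_X(f(x)-e^{ia(x,y)}f(y))\overline{g(x)}\,\Pi_t(dxdy)$, which is exactly $\langle\Phi_tf,g\rangle$ since $\Pi_t(dxdy)=\frac1t P_t(x,dy)\mu(dx)$.

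With this identity in hand, the second step is to let $t\to0$. By the very definition of $\mathcal{E}^a$ as the limit of the $\Pi_t$-integrals (Corollary \ref{C:boundedgeneralcase} for $a\in\Omega^1_a(\mathcal{D})$, and then by approximation together with the estimate (\ref{E:difference}) for general real $a\in\mathcal{H}_a$), the right-hand side converges to $\mathcal{E}^a(f,g)$. Adding the $t$-independent electric term $\langle vf,g\rangle$ gives
\[\lim_{t\to0}\langle \Phi_tf+vf,\,g\rangle=\mathcal{E}^a(f,g)+\langle vf,g\rangle=\mathcal{E}^{a,v}(f,g),\ \ f,g\in\mathcal{C}_L.\]
By Theorem \ref{T:coincide2} the right-hand side equals $\langle H^{a,v}f,g\rangle$, so $\Phi_tf+vf$ converges to $H^{a,v}f$ in the dual pairing against the dense set $\mathcal{C}_L$. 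To anchor the limiting object I would use $\mathcal{C}_L\subset\dom H^{a,v}$ (Theorem \ref{T:coincide2}): for such $f$ the generator relation for $(P^{a,v}_t)_{t>0}$ gives $\frac1t(f-P^{a,v}_tf)\to H^{a,v}f$ strongly in $L^2(X,\mu)$, so $H^{a,v}f$ is a genuine $L^2$-element, and testing the weak convergence against the dense set $\mathcal{C}_L$ identifies it with $\lim_t(\Phi_tf+vf)$, in the same sense as the generator formula (\ref{E:genformula}).

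The main obstacle is the precise mode of convergence, i.e. reconciling the two difference quotients $\Phi_tf+vf$ and $\frac1t(f-P^{a,v}_tf)$. The latter carries the full Stratonovich phase $e^{iS_t-\int_0^tv(Y_s)ds}$, whereas $\Phi_t$ uses only the single endpoint phase $e^{ia(x,Y_t)}$; comparing them directly at the path level would require showing that over a short time the line integral $S_t=\int_{Y([0,t])}a$ is well approximated by $a(Y_0,Y_t)$, together with $\frac1t\int_0^tv(Y_s)ds\to v(Y_0)$, a statement complicated by the jumps of $Y$. I would sidestep this estimate entirely by routing the identification through the form $\mathcal{E}^{a,v}$ and Theorem \ref{T:coincide2} as above. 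A secondary technical point is the meaning of $e^{ia(x,y)}$ for general $a\in\mathcal{H}_a$ rather than $a\in\Omega^1_a(\mathcal{D})$; this is handled by the same approximation and near-diagonal linearization used in the proof of Lemma \ref{L:magneticBD}, with the local contribution controlled through (\ref{E:firstestimate}) and (\ref{E:difference}).
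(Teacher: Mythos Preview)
Your proposal is correct and follows essentially the same approach as the paper. The paper's one-line proof simply says that the result follows from the identity (\ref{E:shiftover}) with $P_t(x,dy)$ in place of $n(x,dy)$; this is exactly the symmetry computation you record in your first step, and the subsequent passage to the limit $t\to0$ together with the identification $\mathcal{E}^{a,v}(f,g)=\langle H^{a,v}f,g\rangle$ from Theorem~\ref{T:coincide2} are implicit in the paper's brevity but made explicit by you.
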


Corollary \ref{C:diamag} implies an estimate that generalizes an inequality in \cite{Si76}, see also \cite[p. 2]{Simon79}.

\begin{corollary}\label{C:diamagenergy}
Under the conditions of Theorem \ref{T:coincide2} we have $\mathcal{E}^{0,v}(|f|)\leq \mathcal{E}^{a,v}(f)$,  $f\in\mathcal{C}_L$, and in particular, $\mathcal{E}(|f|)\leq \mathcal{E}^{a,0}(f)$.
\end{corollary}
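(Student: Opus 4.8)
The plan is to obtain the form inequality from the pointwise semigroup domination in Corollary \ref{C:diamag} by the standard device of recovering a closed form from its semigroup as a monotone limit. Fix $f\in\mathcal{C}_L$. Since $P_t^{a,v}$ is self-adjoint (Theorem \ref{T:semigroup}), the pairing $\langle P_t^{a,v}f,f\rangle_{L^2(X,\mu)}$ is real, so it coincides with its real part and
\[\langle P_t^{a,v}f,f\rangle = \mathrm{Re}\int_X P_t^{a,v}f(x)\overline{f(x)}\,\mu(dx)\leq \int_X |P_t^{a,v}f(x)|\,|f(x)|\,\mu(dx).\]
By Corollary \ref{C:diamag} we have $|P_t^{a,v}f(x)|\leq P_t^{0,v}|f|(x)$ pointwise, and since $P_t^{0,v}$ sends the nonnegative function $|f|$ to a nonnegative function, the right-hand side is bounded by $\int_X P_t^{0,v}|f|(x)\,|f|(x)\,\mu(dx)=\langle P_t^{0,v}|f|,|f|\rangle$. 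Subtracting from $\|f\|_{L^2(X,\mu)}^2=\||f|\|_{L^2(X,\mu)}^2$ therefore yields, for every $t>0$,
\[\frac1t\langle (I-P_t^{a,v})f,f\rangle \;\geq\; \frac1t\langle (I-P_t^{0,v})|f|,|f|\rangle. \qquad (\star)\]

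First I would recall that for a strongly continuous self-adjoint semigroup whose generator is bounded below, the associated closed form is recovered as $\lim_{t\to 0}\tfrac1t\langle(I-T_t)u,u\rangle$, finite precisely when $u$ lies in the form domain. The boundedness below is furnished by the bound $e^{t\|v_-\|_{\sup}}$ of Theorem \ref{T:semigroup}: passing to the contraction semigroups $e^{-t\|v_-\|_{\sup}}P_t^{\cdot,v}$, to which the monotone (spectral) limit applies, and using that the elementary factor $e^{-t\|v_-\|_{\sup}}$ contributes only a $\|v_-\|_{\sup}\|u\|^2$ correction, one sees that the raw quotients $\tfrac1t\langle(I-P_t^{\cdot,v})u,u\rangle$ converge to $\mathcal{Q}^{\cdot,v}(u)$ in $(-\infty,+\infty]$. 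Letting $t\to 0$ in $(\star)$, the left-hand side converges to $\mathcal{Q}^{a,v}(f)=\mathcal{E}^{a,v}(f)$, which is finite because $f\in\mathcal{C}_L\subset\dom\mathcal{Q}^{a,v}$ by Theorem \ref{T:coincide2}. Hence the right-hand limit is finite as well, so $|f|\in\dom\mathcal{Q}^{0,v}$ and $\mathcal{Q}^{0,v}(|f|)\leq\mathcal{E}^{a,v}(f)$.

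It then remains to identify $\mathcal{Q}^{0,v}(|f|)$ with $\mathcal{E}^{0,v}(|f|)$. Since $f\in\mathcal{C}=\mathcal{F}\cap C_c(X)$ and the Markovian property of $(\mathcal{E},\mathcal{F})$ gives $|f|\in\mathcal{F}$ with $|f|\in C_c(X)$, we have $|f|\in\mathcal{C}$, so $\mathcal{E}^{0,v}(|f|)=\mathcal{E}(|f|)+\int_X|f|^2 v\,d\mu$ is the genuine phase-free value of the energy form at $|f|$. The equality $\mathcal{Q}^{0,v}(|f|)=\mathcal{E}^{0,v}(|f|)$ on $\mathcal{C}$ is the $a\equiv 0$ instance of Theorems \ref{T:coincide} and \ref{T:coincide2}: with no magnetic phase present the Girsanov computation there reduces to the classical Feynman--Kac identification of the Schr\"odinger form, which holds on all of $\mathcal{C}$. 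This gives $\mathcal{E}^{0,v}(|f|)\leq\mathcal{E}^{a,v}(f)$, and the special case $v\equiv 0$ (for which $\mathcal{E}^{0,0}=\mathcal{E}$) produces $\mathcal{E}(|f|)\leq\mathcal{E}^{a,0}(f)$. The hard part will be precisely this last identification: one must ensure the closed form of the $a\equiv 0$ Feynman--Kac semigroup really equals $\mathcal{E}+\int|\cdot|^2 v\,d\mu$ at the specific element $|f|$, which need not lie in $\mathcal{C}_L$; everything preceding it is the routine monotone-limit passage from $(\star)$.
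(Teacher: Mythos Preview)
Your argument is correct and follows essentially the same route as the paper: deduce $\langle P_t^{a,v}f,f\rangle\le\langle P_t^{0,v}|f|,|f|\rangle$ from Corollary~\ref{C:diamag}, subtract from $\|f\|^2=\||f|\|^2$, and pass to $t\to 0$ via semigroup approximation of the forms. The paper compresses all of this into two lines and the phrase ``by semigroup approximation''; you have spelled out the monotone-limit mechanism more carefully. Your worry about the identification $\mathcal{Q}^{0,v}(|f|)=\mathcal{E}^{0,v}(|f|)$ at $|f|\in\mathcal{C}\setminus\mathcal{C}_L$ is a fair point that the paper also leaves implicit; for $a\equiv 0$ this is just the classical Feynman--Kac form identification $\mathcal{Q}^{0,v}=\mathcal{E}+\int|\cdot|^2 v\,d\mu$ on $\mathcal{F}$, which follows directly from $\tfrac1t\langle(I-P_t^{0,v})g,g\rangle=\tfrac1t\langle(I-P_t)g,g\rangle+\tfrac1t\langle(P_t-P_t^{0,v})g,g\rangle$ and dominated convergence on the second term (using that $v_-$ is bounded and $g=|f|$ is bounded with compact support), so it does not rely on Theorem~\ref{T:coincide2} and no regularity beyond $|f|\in\mathcal{C}$ is needed.
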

\begin{proof}
For $f\in\mathcal{C}_L$ we have $|f|\in\mathcal{C}$ and $|\Re \left\langle P_t^{a,v}f,f\right\rangle_{L^2(X,\mu)}|\leq \left\langle P_t^{0,v}|f|,|f|\right\rangle_{L^2(X,\mu)}$.
Therefore $\left\langle f-P_t^{a,v}f,f\right\rangle_{L^2(X,\mu)}\geq \left\langle |f|-P_t^{0,v}|f|,|f|\right\rangle_{L^2(X,\mu)}$, what implies the statement by semigroup approximation. 
\end{proof}

We prove Theorems \ref{T:coincide} and \ref{T:coincide2} and Corollary \ref{C:sgapproxmagnetic}. Given $\omega\in\mathcal{H}$ let $M=\Theta(\omega)$. By Theorem \ref{T:Nakao} we have $\mu_{\left\langle M\right\rangle}(dx)=2\Gamma_{\mathcal{H}}(\omega)(dx)$,
where $\Gamma_\mathcal{H}(\omega)$ is the energy measure of $a$ as in (\ref{E:GammaH}).
Since $(\mathcal{E},\mathcal{F})$ admits a carr\'e du champ, also the energy measure $\Gamma_\mathcal{H}(\omega)$ has a $\mu$-integrable density $x\mapsto \Gamma_{\mathcal{H}}(\omega)(x)$, see for instance \cite[Section 2]{HRT}, hence $\mu_{\left\langle M\right\rangle}(dx)=2\Gamma_{\mathcal{H}}(\omega)(x)\mu(dx)$. 
By $x\mapsto \Gamma_{\mathcal{H},c}(\omega)(x)$ we denote the density of the
strongly local part $\Gamma_{\mathcal{H},c}(\omega)$ of $\Gamma_{\mathcal{H}}(\omega)$ as defined in (\ref{E:GammaHc}). As $(P_t)_{t>0}$ is Markovian and $\mu$-symmetric, we have 
\[\int_X h(x)\overline{P_s g(x)}\mu(dx)=\int_X P_s h(x)\overline{g(x)}\mu(dx)\]
for any $s>0$, any any $h\in L^\infty(X,\mu)$ and $g\in L^1(X,\mu)$. Therefore 
\begin{align}
\int_X \mathbb{E}_x(\left\langle M\right\rangle_t)\overline{g(x)}\mu(dx)&=2\int_0^t\int_X  \Gamma_{\mathcal{H}}(\omega)(x) \overline{P_s g(x)}\mu(dx)ds\notag\\
&=2\int_0^t\int_X P_s\Gamma_{\mathcal{H}}(\omega)(x) \overline{g(x)}\mu(dx)ds\notag\\
&=2\int_X\left(\mathbb{E}_x\int_0^t \Gamma_{\mathcal{H}}(\omega)(Y_s)ds \right)\overline{g(x)}\mu(dx)\notag
\end{align}
for any $h\in L^\infty(X,\mu)$. Clearly $t\mapsto \int_0^t \Gamma_{\mathcal{H}}(\omega)(Y_s)ds$ is a positive continuous AF. Therefore the uniqueness in the Revuz correspondence and polarization yield the following.

\begin{lemma}\label{L:brackets}
Given $\omega_1,\omega_2\in\mathcal{H}$ set $M_1:=\Theta(\omega_1)$ and $M_2:=\Theta(\omega_2)$. Then 
\[\left\langle M_1, M_2\right\rangle_t=2\int_0^ t\Gamma_{\mathcal{H}}(\omega_1,\omega_2)(Y_s)ds\ \ \text{ and }\ \ 
\left\langle M_1^c, M_2^c\right\rangle_t=2\int_0^ t\Gamma_{\mathcal{H},c}(\omega_1,\omega_2)(Y_s)ds.\]
\end{lemma}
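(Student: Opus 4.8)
The plan is to extract the quadratic identity directly from the computation carried out just before the statement, and then to obtain the bilinear and continuous-part versions by polarization and by projecting onto the continuous component via Corollary~\ref{C:Nakao}.

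First I would take $\omega_1=\omega_2=\omega$ and $M=\Theta(\omega)$. The chain of equalities displayed above already shows
\[
\int_X\mathbb{E}_x(\langle M\rangle_t)\,\overline{g(x)}\,\mu(dx)=\int_X\mathbb{E}_x\Big(2\int_0^t\Gamma_{\mathcal{H}}(\omega)(Y_s)\,ds\Big)\overline{g(x)}\,\mu(dx)
\]
for every $g$ in the relevant test class. Both $t\mapsto\langle M\rangle_t$ and $t\mapsto 2\int_0^t\Gamma_{\mathcal{H}}(\omega)(Y_s)\,ds$ are positive continuous additive functionals of $Y$: the first has Revuz measure $2\Gamma_{\mathcal{H}}(\omega)\,d\mu$ by Theorem~\ref{T:Nakao} together with the carr\'e du champ hypothesis, and the second has the same Revuz measure by construction. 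Since a positive continuous additive functional is uniquely determined by its Revuz measure, uniqueness in the Revuz correspondence yields $\langle M\rangle_t=2\int_0^t\Gamma_{\mathcal{H}}(\omega)(Y_s)\,ds$, $\mathbb{P}_x$-a.s. for q.e.\ $x$. Polarization then gives the first assertion: since $\Theta$ is linear we have $\Theta(\omega_1\pm\omega_2)=M_1\pm M_2$, and applying the quadratic identity to $\omega_1+\omega_2$ and $\omega_1-\omega_2$ and subtracting (using that $\langle\cdot,\cdot\rangle$ and $\Gamma_{\mathcal{H}}(\cdot,\cdot)$ are the polarizations of $\langle\cdot\rangle$ and $\Gamma_{\mathcal{H}}(\cdot)$) produces $\langle M_1,M_2\rangle_t=2\int_0^t\Gamma_{\mathcal{H}}(\omega_1,\omega_2)(Y_s)\,ds$.

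For the continuous parts I would use Corollary~\ref{C:Nakao}: $\Theta$ carries $\mathcal{H}_c$ onto $\mathring{\mathcal{M}}_c$ and $\mathcal{H}_j$ onto $\mathring{\mathcal{M}}_j$, and this orthogonal splitting is exactly the decomposition of a finite-energy martingale additive functional into its continuous and purely discontinuous parts. Hence $M_i^c=\Theta(\omega_{i,c})$, where $\omega_{i,c}$ is the local part of $\omega_i$ from (\ref{E:BDH}). Feeding $\omega_{1,c}$ and $\omega_{2,c}$ into the identity already proved gives $\langle M_1^c,M_2^c\rangle_t=2\int_0^t\Gamma_{\mathcal{H}}(\omega_{1,c},\omega_{2,c})(Y_s)\,ds$, so all that remains is to identify $\Gamma_{\mathcal{H}}(\omega_{i,c})$ with the local part $\Gamma_{\mathcal{H},c}(\omega_i)$. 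Using the defining relations (\ref{E:GammaH}) and (\ref{E:GammaHc}) together with the fact that the multiplication action preserves the local/jump decomposition of (\ref{E:BDH}), so that $(\varphi\omega_i)_c=\varphi\,\omega_{i,c}$ and the $\mathcal{H}_c$-inner product sees only local parts, one computes $\int_X\varphi\,d\Gamma_{\mathcal{H}}(\omega_{i,c})=\langle\varphi\omega_{i,c},\omega_{i,c}\rangle_{\mathcal{H}}=\langle\varphi\omega_i,\omega_i\rangle_{\mathcal{H}_c}=\int_X\varphi\,d\Gamma_{\mathcal{H},c}(\omega_i)$ for $\varphi\in\mathcal{C}$, and a further polarization finishes the identification.

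The step I expect to be the main obstacle is this last one: cleanly justifying that the continuous martingale part of $M_i=\Theta(\omega_i)$ corresponds under $\Theta^{-1}$ to the strongly local form component $\omega_{i,c}$, and that the energy measure of $\omega_{i,c}$ coincides with the local part $\Gamma_{\mathcal{H},c}(\omega_i)$. Once these identifications are in place, both displayed formulas follow from the preceding computation and the uniqueness of the Revuz correspondence.
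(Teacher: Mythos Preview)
Your argument is correct and follows the paper's own reasoning essentially verbatim: the paper derives the quadratic identity from the displayed computation via uniqueness in the Revuz correspondence and then invokes polarization, exactly as you do. The paper is in fact terser than you on the second formula, but your route through Corollary~\ref{C:Nakao} (identifying $M_i^c=\Theta(\omega_{i,c})$ and then $\Gamma_{\mathcal{H}}(\omega_{i,c})=\Gamma_{\mathcal{H},c}(\omega_i)$) is the natural way to unpack what the paper leaves implicit, and both identifications hold for the reasons you give.
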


Standard Girsanov type arguments yield a first explicit representation for $H^{a,v}$. Recall that $L_c$, $\partial_c$, $\partial_c^\ast$ and $\Gamma_{\mathcal{H},c}$ denote the strongly local parts of $L$, $\partial$, $\partial^\ast$ and $\Gamma_{\mathcal{H}}$. 

\begin{lemma}
Let the hypotheses of Theorem \ref{T:coincide} be in force. Then
\begin{multline}\label{E:firstexplicit}
H^{a,v}f(x)=-L_c f(x)-2i\Gamma_{\mathcal{H},c}(\partial_c f, a_c)(x)+i\partial_c^\ast a(x)f(x)+\Gamma_{\mathcal{H},c}(a_c)(x)f(x)\\
+\int_X (f(x)-e^{ia_j(x,y)}f(y))n(x,dy) +v(x)f(x)
\end{multline}
for all $f\in \mathcal{D}_L$, seen as an equality in $L^1(X,\mu)$.
\end{lemma}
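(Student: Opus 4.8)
The plan is to read off the generator directly from the Feynman--Kac--It\^o formula by a Girsanov-type expansion. Writing $Z_t:=e^{iS_t-\int_0^tv(Y_s)ds}$ with $S_t=\int_{Y([0,t])}a=M_t+\Lambda(M)_t$ and $M:=\Theta(a)$, we have $P_t^{a,v}f(x)=\mathbb{E}_x[Z_tf(Y_t)]$, and since $H^{a,v}f=-\lim_{t\to 0}\frac1t(f-P_t^{a,v}f)$ it suffices to isolate the finite-variation (drift) part of the semimartingale $Z_tf(Y_t)$ and to evaluate its density at time $0$, where $Z_0=1$ and $Y_0=x$. The three structural inputs are: Remark \ref{R:cosycase}, giving $\Lambda(M)_t=\int_0^t(Lu)(Y_s)ds=-\int_0^t(\partial^\ast a)(Y_s)ds$ for $a=\partial u+\eta$; Theorem \ref{T:jumpfunction}, giving the jumps $\Delta M_s=a_j(Y_{s-},Y_s)$, so that $S$ has continuous finite-variation part $\Lambda(M)$ and jumps $a_j$; and Lemma \ref{L:brackets} together with Corollary \ref{C:Nakao}, which identify $M^c=\Theta(a_c)$ and express the sharp brackets as time integrals of the local densities $\Gamma_{\mathcal{H},c}$.

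First I would treat the local (diffusion) contribution. Applying the exponential It\^o formula to $Z=e^{W}$, $W:=iS-\int_0^\cdot v(Y_s)ds$, and the product rule to $Z_tf(Y_t)$, together with Fukushima's decomposition $f(Y_t)=f(x)+M_t^f+\int_0^t(Lf)(Y_s)ds$, the continuous martingale part $M^c=\Theta(a_c)$ produces two corrections: the It\^o term $-\tfrac12 d\langle M^c\rangle_s=-\Gamma_{\mathcal{H},c}(a_c)(Y_s)ds$ and the cross bracket $d\langle M^c,M^{f,c}\rangle_s=2\,\Gamma_{\mathcal{H},c}(a_c,\partial_c f)(Y_s)ds$, both evaluated by Lemma \ref{L:brackets}. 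Collecting these with $-L_cf$ from Fukushima's drift and with $i(\partial_c^\ast a)f$ from the continuous part of $\Lambda(M)$, and then negating, yields exactly the strongly local block $-L_cf-2i\,\Gamma_{\mathcal{H},c}(\partial_c f,a_c)+i(\partial_c^\ast a)f+\Gamma_{\mathcal{H},c}(a_c)f$ of \eqref{E:firstexplicit}, together with the potential term $vf$.

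The delicate step is the jump contribution. The second-order-and-higher terms of $e^{iS}$ give the compensated sum of $e^{iS_{s-}}(e^{ia_j}-1-ia_j)$, and combining these with the jumps $\Delta Z_s\,\Delta f(Y_s)$ of the cross quadratic variation $[Z,f\circ Y]$ collapses, after the linear $ia_j$-terms telescope, to the per-jump increment $Z_{s-}\big[(e^{ia_j}-1)f(Y_s)-ia_j\,f(Y_{s-})\big]$. I would compensate this purely discontinuous sum with the L\'evy system formula \eqref{E:LSF} using $N(x,dy)=n(x,dy)$ and $H_t=t$, converting it into the predictable drift $\int_X[(e^{ia_j(x,y)}-1)f(y)-ia_j(x,y)f(x)]n(x,dy)$. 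Adding the non-local generator drift $L_jf(x)=\int_X(f(y)-f(x))n(x,dy)$ from Fukushima and the non-local divergence $\partial_j^\ast a_j(x)=\int_X a_j(x,y)n(x,dy)$ from $\Lambda(M)$, and negating, the two copies of $i(\partial_j^\ast a_j)f$ cancel by the antisymmetry of $a_j$, leaving the single non-local operator $\int_X(f(x)-e^{ia_j(x,y)}f(y))n(x,dy)$. Keeping the orientation of $a_j$ consistent between Theorem \ref{T:jumpfunction} and the L\'evy compensator, so that the phase $e^{ia_j}$ appears with the correct sign, is the point that must be handled with care.

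It then remains to justify the analytic manipulations, and this is where the main effort lies. I would verify that the several stochastic-integral and compensated-sum terms are genuine (not merely local) $\mathbb{P}_\mu$-martingales of zero expectation, using $|Z_t|\le e^{t\left\|v_-\right\|_{\sup}}$ and the hypotheses $f,Lf\in L^1(X,\mu)\cap L^\infty(X,\mu)$ built into $\mathcal{D}_L$, together with the finite-energy bound for $M=\Theta(a)$ from Theorem \ref{T:Nakao}; the truncated compensated jump sums converge as in Lemma \ref{L:folklore}. Finally, dividing by $t$ and letting $t\to 0$ requires interchanging the limit with $\mathbb{E}_x$ and with $\frac1t\int_0^t(\cdot)\,ds$, which I would secure by dominated convergence and the right-continuity of the drift density at $s=0$, yielding the asserted pointwise $L^1(X,\mu)$ identity. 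The principal obstacle is thus twofold: the exact sign and orientation bookkeeping in the jump compensation, so that the spurious divergence terms cancel and $e^{ia_j}$ emerges correctly, and the integrability arguments promoting local martingales to true martingales and legitimizing the $t\to 0$ limit.
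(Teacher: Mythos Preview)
Your proposal is correct and follows essentially the same Girsanov-type route as the paper: apply It\^o's formula to $e^{iC_t}$ with $C_t=S_t-\int_0^t v(Y_s)ds$, use the product rule with Fukushima's decomposition of $f(Y_t)$, identify brackets via Lemma \ref{L:brackets}, and compensate the jump sums through the L\'evy system formula with $N(x,dy)=n(x,dy)$, $H_t=t$. The one place where the paper proceeds slightly differently is the final $t\to 0$ step: rather than arguing pointwise via dominated convergence and right-continuity of the drift, the paper pairs against $g\in L^\infty(X,\mu)$, recognizes the resulting time integrand as $\big\langle P_s^{a,v}[\text{drift}],g\big\rangle$, invokes the $L^1$-strong continuity of $(P_s^{a,v})_{s>0}$ from Theorem \ref{T:semigroup} to differentiate, and then concludes by Hahn--Banach; this cleanly bypasses the pointwise integrability checks you flag as the ``principal obstacle''.
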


\begin{examples}
In the classical case of the $n$-dimensional Brownian motion from Examples \ref{Ex:sum} (i) we observe 
\[H^{a,v}f=-\frac12\Delta f-ia\cdot\nabla f-\frac{i}{2}(\diverg a)f+\frac12 a^2f+vf.\]
\end{examples}

\begin{proof}
Let $M=\Theta(a)$. We write $S_t=\int_{Y([0,t])}a$ and $V_t:=\int_0^t v(Y_s)ds$ and set
$C_t:=S_t-V_t$. By Remark \ref{R:cosycase2} (iii) the process $C=(C_t)_{t\geq 0}$ is a cadlag $\mathbb{P}_x$-semimartingale for q.e. $x\in X$, and by It\^o's formula, \cite[Section II.7, Theorem 32]{Protter}, 
\[e^{iC_t}=i\int_0^te^{iC_{s-}}dC_s-\frac12\int_0^te^{iC_{s-}}d\left\langle M^c\right\rangle_s
+\sum_{0<s\leq t}\left\lbrace e^{iC_s}-e^{iC_{s-}}-ie^{C_{s-}}(C_s-C_{s-})\right\rbrace +1\]
$\mathbb{P}_x$-a.s. for q.e. $x\in X$.
The mutual variation of $(f(Y_t))_{t\geq 0}$ and $(e^{iC_t})_{t\geq 0}$ is given by
\[\left[f(Y),e^{iC}\right]_t=\left\langle f(Y)^c, (e^{iC})^c\right\rangle_t+f(Y_0)+\sum_{0<s\leq t}(f(Y_s)-f(Y_{s-}))(e^{iC_s}-e^{iC_{s-}}),\]
see \cite[Section II.6]{Protter}. Integrating by parts, \cite[Section II.6, Corollary 2]{Protter},
\begin{align}
e^{iC_s}f(Y_t)=f(Y_0)&+i\int_0^t f(Y_{s-})e^{iC_{s-}}dC_s-\frac12\int_0^tf(Y_{s-})e^{iC_{s-}}d\left\langle M^c\right\rangle_s\notag\\
&+\sum_{0<s\leq t}f(Y_{s-})\left\lbrace e^{iC_s}-e^{iC_{s-}}-ie^{C_{s-}}(C_s-C_{s-})\right\rbrace\notag\\
&+\int_0^t e^{iC_{s-}}dM_s^f +\int_0^t e^{iC_{s-}}dN_s^f\notag\\
&+ i\int_0^t e^{iC_{s-}} d\left\langle M^{f,c}, M^c\right\rangle_s\notag\\
&+ \sum_{0<s\leq t}(f(Y_s)-f(Y_{s-}))(e^{iC_s}-e^{iC_{s-}}),
\end{align}
$\mathbb{P}_x$-a.s for q.e. $x\in X$. 
We have $\partial_j^\ast a_j(x)=\int a_j(x,y)n(x,dy)$
so that
\[\int_0^t f(Y_{s-})e^{iC_{s-}}(\partial_j^\ast a)(Y_s)ds=\int_0^tf(Y_{s-})e^{iC_{s-}}\int_X a_j(Y_s,y)n(Y_s,dy)ds\]
and 
\begin{align}
i\int_0^t f(Y_{s-})e^{iC_{s-}}dC_s&=i\int_0^ t f(Y_{s-})e^{iC_{s-}}dM_s-i\int_0^tf(Y_{s-})e^{iC_{s-}}(\partial_c^\ast a)(Y_s)ds\notag\\
&-i\int_0^t f(Y_{s-})e^{iC_{s-}}\int_X a_j(Y_s,y)n(Y_s,dy)ds.\notag
\end{align}
Now recall (\ref{E:Lj}). Taking into account Lemma \ref{L:brackets}, $e^{iC_t}f(Y_t)$ is seen to equal
\begin{align}\label{E:collect}
f(Y_0)&+i\int_0^tf(Y_{s-})e^{iC_{s-}}dM_s-i\int_0^tf(Y_{s-})e^{iC_{s-}}(\partial^\ast_c a)(Y_s)ds
-\int_0^t f(Y_{s-})v(Y_s)ds\notag\\
&-\int_0^t f(Y_{s-})e^{iC_{s-}}\Gamma_{\mathcal{H}_c}(a_c)(Y_s)ds+\int_0^t e^{iC_{s-}}dM_s^{f}+\int_0^t e^{iC_{s-}}L^cf(Y_s)ds\notag\\
&+2i\int_0^t  e^{iC_{s-}}\Gamma_{\mathcal{H},c}(\partial_c f, a_c)(Y_s)ds\notag\\
&-i\int_0^t f(Y_{s-})e^{iC_{s-}}\int_X a_j(Y_s,y)n(Y_s,dy)ds+\int_0^t e^{iC_{s-}}\int_X (f(y)-f(Y_s))n(Y_s,dy)\notag\\
&-i\sum_{0<s\leq t}f(Y_{s-})e^{iC_{s-}}(C_s-C_{s-}) + \sum_{0<s\leq t}f(Y_s)(e^{iC_s}-e^{iC_{s-}})\notag
\end{align}
$\mathbb{P}_x$-a.s. for q.e. $x\in X$. Taking expectations with respect to $\mathbb{P}_x$ the martingale terms vanish. Moreover, the processes $t\mapsto f(Y_{t-})e^{iC_{t-}}$ and $t\mapsto e^{iC_{t-}}$ are left continuous, hence they are predictable, and using the L\'evy system formula (\ref{E:LSF}) we therefore obtain
\begin{multline}
\mathbb{E}_x[e^{iC_t}f(Y_t)]=f(x)+\int_0^t\mathbb{E}_x\left[e^{iC_{s-}}\left\lbrace L^cf(Y_s)+2i\Gamma_{\mathcal{H},c}(\partial f, a)(Y_s)-i\partial_c^\ast a(Y_s)f(Y_{s-})\right.\right.\notag\\
\left.\left.- \Gamma_{\mathcal{H},c}(a)(Y_s)f(Y_{s-})-v(Y_s)f(Y_{s-})\right\rbrace\right]ds\notag\\
+\mathbb{E}_x\left[\sum_{0<s\leq t}\left\lbrace f(Y_s)(e^{iC_s}-e^{iC_{s-}})+e^{iC_{s-}}(f(Y_s)-f(Y_{s-}))\right\rbrace\right],\notag
\end{multline}
note that $C_s-C_{s-}=M_s-M_{s-}=a_j(Y_{s-},Y_s)$. The last summand rewrites
\[\mathbb{E}_x\left[\sum_{0<s\leq t}(e^{iC_s}f(Y_s)-e^{iC_{s-}}f(Y_{s-}))\right]=\mathbb{E}_x\left[\sum_{0<s\leq t}e^{iC_{s-}}(e^{ia(Y_{s-},Y_s)}f(Y_s)-f(Y_{s-}))\right],\]
what by (\ref{E:LSF}) equals $\mathbb{E}_x\left[\int_0^t e^{iC_{s-}}\int_X (e^{ia_j(Y_s,y)}f(y)-f(Y_s))n(Y_s,dy)ds\right]$. By Lemma \ref{L:replace} integration against $\overline{g}d\mu$ with $g\in L^\infty(X,\mu)$ shows that $\left\langle P_t^{a,v}f,g\right\rangle$ equals
\begin{multline}
\left\langle f, g\right\rangle +\int_0^t \left\langle P_s^{a,v}\left\lbrace L^cf+2i\Gamma_{\mathcal{H},c}(\partial f,a)-i(\partial_c^\ast a)f-\Gamma_{\mathcal{H},c}(a)f-vf\right.\right.\notag\\
\left.\left.+ \int_X (e^{ia_j(\cdot, y)}f(y)-f(\cdot))n(\cdot,dy)\right\rbrace, g\right\rangle ds\notag
\end{multline}
By Theorem \ref{T:semigroup} the integrand is continuous in $s$, hence $\left\langle H^{a,v}f,g \right\rangle$ is the limit as $t$ goes to zero of $t^{-1}$ times the integral on the right hand side. Using Hahn-Banach we may conclude formula (\ref{E:firstexplicit}).
\end{proof}

Theorem \ref{T:coincide} now follows easily: By the Leibniz rule (\ref{E:Leibniz}), \[\left\langle (\partial_c^\ast a)f,g\right\rangle=\left\langle f, (\partial^\ast_c a)g\right\rangle_{L^2(X,\mu)}=\left\langle \partial (\overline{g}f),a\right\rangle_{\mathcal{H}_c}=\left\langle \partial f, ag\right\rangle_{\mathcal{H}_c}+\left\langle af,\partial g\right\rangle_{\mathcal{H}_c}.\] With $\left\langle \Gamma_{\mathcal{H},c}(\partial f, a),g\right\rangle=\left\langle \partial f, ag\right\rangle_{\mathcal{H}_c}$
and
$\left\langle \Gamma_{\mathcal{H},c}(a)f,g\right\rangle=\left\langle af, ag\right\rangle_{\mathcal{H}_c}$,
we obtain
\begin{multline}
\left\langle H^{a,v}f,g\right\rangle=\left\langle \partial_c f,\partial_c g\right\rangle_{\mathcal{H}_c}+i\left\langle\partial_c f, ag\right\rangle_{\mathcal{H}_c}+i\left\langle af, \partial g\right\rangle_{\mathcal{H}_c}+\left\langle af,ag\right\rangle_{\mathcal{H}_c} +\left\langle vf,g\right\rangle\notag\\
+\int_X\int_X (f(x)-e^{ia_j(x,y)}f(y))n(x,dy)\overline{g(x)}\mu(dx).\notag
\end{multline}

We provide the arguments for Theorem \ref{T:coincide2}. For $f\in\mathcal{C}_L$ we have $H^{a,v}f\in L^2(X,\mu)$, hence $\mathcal{Q}^{a,v}(f,g)=\left\langle H^{a,v}f,g\right\rangle_{L^2(X,\mu)}=\left\langle H^{a,v}f,g\right\rangle$, $g\in \mathcal{C}_L$. On the other hand we observe 
$\left\langle H^{a,v}f,g\right\rangle=\mathcal{E}^{a,v}(f,g)$
since
\begin{align}\label{E:shiftover}
-\int_X\int_X(f(x)-e^{ia_j(x,y)}f(y))&\overline{e^{ia_j(x,y)}g(y)}n(x,dy)\mu(dx)\\
&=\int_X\int_X(f(y)-e^{-ia_j(x,y)}f(x))\overline{g(y)}n(x,dy)\mu(dx)\notag\\
&=\int_X\int_X(f(x)-e^{ia_j(x,y)}f(y))\overline{g(x)}n(x,dy)\mu(dx),\notag
\end{align}
where have used the antisymmetry of $a_j$ and the symmetry of $n(x,dy)\mu(dx)$ in the second equality. The arguments of \cite[Proposition 4.1]{HTb} yield a first interpretation of (\ref{E:Havexplicit}) in the topological dual of the space $\mathcal{C}_L$ (endowed with the norm $f\mapsto \mathcal{E}(f)^{1/2}+\left\|f\right\|_{L^2(X,\mu)}+\left\|f\right\|_{\sup}$). Combined with the fact that $H^{a,v}f\in L^2(X,\mu)$ we arrive at (\ref{E:Havexplicit}).

Corollary \ref{C:sgapproxmagnetic} follows by considering (\ref{E:shiftover}) with $P_t(x,dy)$ in place of $n(x,dy)$.

\end{document}